\newtheorem{theorem}{Theorem}[section]
\newtheorem{proposition}{Proposition}[section]
\newtheorem{corollary}{Corollary}[section]
\newtheorem{conjecture}{Conjecture}[section]
\newtheorem{example}{Example}[section]
\numberwithin{equation}{section}
\numberwithin{table}{section}
\title{Braid orbits and the Mathieu group $M_{23}$ as Galois group}
\author{Frank H\"afner}
\begin{document}
\maketitle
%\begin{center}{\Huge A4 paper}\end{center} % only for testing
%
\abstract{
\noindent
At present, the inverse Galois problem over $\mathbb{Q}$ is unsolved for the Mathieu group $M_{23}$.
Here an overview of the current state in realizing $M_{23}$ as Galois group using the rigidity method
and the action of braids is given. Computing braid orbits for $M_{23}$ revealed new invariants of the
action of braids in addition to Fried's lifting invariant. These invariants can be used to construct
generic braid orbits and more Galois realizations over $\mathbb{Q}$ for the Mathieu group $M_{24}$,
but until now did not lead to success for realising $M_{23}$ as Galois group over $\mathbb{Q}$.
Thus $M_{23}/\mathbb{Q}$ remains open. Finally, heuristics for searching suitable class vectors with
regard to the realization of groups as Galois groups are given.}

\section{Introduction}\label{SectionIntro}

At present, the inverse Galois problem over $\mathbb{Q}$ is unsolved for the Mathieu group $M_{23}$,
i.e. no polynomial $f(X) \in \mathbb{Z}[X]$ with $Gal(f(X)) \cong M_{23}$ is known. The other $25$
sporadic simple groups have been realized as Galois groups over $\mathbb{Q}$ using the rigidity 
method in \cite[I]{RefMM2018} and the action of braids in \cite[III]{RefMM2018} for the 
Mathieu group $M_{24}$.
For details, see \cite[II, Theorem 9.9]{RefMM2018} and \cite[III, Theorem 7.12]{RefMM2018}.
The geometric Galois extensions over $\mathbb{Q}(t)$ resp. $\mathbb{Q}(v,t)$ in \cite{RefMM2018}
can be specialized to Galois extensions over $\mathbb{Q}$ by applying Hilbert's irreducibility criterion.
In \cite{RefHS1985} and \cite{RefHae1987}, geometric $M_{23}$-Galois extensions over $\mathbb{Q}(\sqrt{-23})(t)$
and $\mathbb{Q}(\sqrt{-7})(t)$ have been constructed from geometric $M_{24}$-Galois extensions
$N_1/\mathbb{Q}(\sqrt{-23})(t)$ and $N_2/\mathbb{Q}(\sqrt{-7})(t)$ by verification
that the fixed fields $N_1^{M_{23}}$ and $N_2^{M_{23}}$ are rational function fields. 
The $M_{24}$-Galois extensions have been achieved by using the rigidity method with
class vectors $(2A,3B,23A)$ and $(2B,3A,21A)$. No suitable class vectors of length $3$, that admit geometric Galois
extensions over $\mathbb{Q}(t)$ for $M_{23}$ have been found, see for example \cite{RefHS1984} and \cite{RefHae1987}.
\par
\noindent
Next, one can use class vectors of length $4$ and study the action of the Hurwitz braid group on
the set of generating systems. This approach was applied with success for $M_{24}$ and generated three
related geometric Galois extensions $N_3/\mathbb{Q}(v,t)$, $N_4/\mathbb{Q}(v,t)$ and
$N_5/\mathbb{Q}(v,t)$ with Galois group $M_{24}$.
The corresponding class vectors are $(12B,2A,2A,2A)$, $(12B,12B,2A,2A)$ and $(12B,12B,12B,12B)$,
see \cite{RefHae1987}, \cite{RefHae1991}, \cite{RefMat1991}, \cite{RefMM2018}
and section \ref{SectionClassVectorsOfDimension4InM24} below. 
The fixed fields $N_i^{M_{23}}$, $i=3,4,5$ have genus $g_3=0$, $g_4=7$ and $g_5=21$.
Thus, only the first one can be rational, but explicit computation of a generating polynomial for 
$N_3/\mathbb{Q}(v,t)$ in \cite{RefGra1996} shows, that $N_3^{M_{23}}$ is not a rational function field.
Using the class vectors $(14A,2A,2A,2A)$ and $(15A,2A,2A,2A)$ of $M_{23}$ leads to geometric Galois extensions
$N_6/\mathbb{Q}(\sqrt{-7})(v,t)$ and $N_7/\mathbb{Q}(\sqrt{-15})(v,t)$ with Galois group $M_{23}$,
see section \ref{SectionClassVectorsOfDimension4InM23} below.
\par
\noindent
In \cite{RefHae1991}, no suitable rational class vectors of length $4$ of $M_{23}$ or $M_{24}$
for realizing $M_{23}$ as Galois group over $\mathbb{Q}(t)$ have been found. The search was limited due
to computing resources but could be continued later with increasingly powerful machines.
Looking at class vectors of length $4$ of small groups gave some hints for further investigations.
Especially symmetric class vectors $(C,C,C,C)$ with a rational conjugacy class $C$ seem to lead to small
braid orbits that are suitable for Galois realizations.
In case of $M_{23}$, these class vectors are $(3A,3A,3A,3A)$, $(4A,4A,4A,4A)$, $(5A,5A,5A,5A)$, $(6A,6A,6A,6A)$
and $(8A,8A,8A,8A)$. Their properties with regard to realizing $M_{23}$ as Galois group over $\mathbb{Q}(v,t)$
will be shown in section \ref{SectionClassVectorsOfDimension4InM23}.
But first, new invariants of the action of braids in addition to Fried's lifting invariant
in \cite{RefFri2010} are introduced. The key observation here is the fact, that the braid group acts on
fixed points that stem from the action of one of its subgroups, see \cite[p. 4]{RefHae1991}.
Studying these fixed points leads to the construction of generic braid orbits.
This will be explained in sections \ref{SectionFixedPointsAndOrbitsOfSize2}, \ref{SectionSymmetriesAndOrbitsOfSize6},
\ref{SectionMoreBraidOrbits}, \ref{SectionFixedPointsAndOrbitsOfSize4}, \ref{SectionSmallOrbitsInDimension6} and
\ref{SectionDerivedClassvectors}.
Then we look at class vectors of length $m$ of $M_{23}$ and $M_{24}$ for $m=4,5,6$ in 
sections \ref{SectionClassVectorsOfDimension4InM23}, \ref{SectionClassVectorsOfDimension4InM24} and
\ref{SectionClassvectorsOfDimension5And6}.
In section \ref{SectionHeuristicsForSearching}, heuristics for searching suitable class vectors with regard
to the realization of groups as Galois groups are given.
Thus, while the understanding of the action of braids could be improved and new invariants of this action
have been found leading to generic braid orbits, the problem, whether $M_{23}$ occurs as Galois group
over $\mathbb{Q}$ still remains open.
\par
\noindent
Properties of finite simple groups are taken from \cite{RefAtlas1985}.
For group elements $\sigma, \tau$ we use $\sigma^{\tau} = \tau^{-1} \sigma \tau$.
If $\rho : G \rightarrow S_k$ is a permutation representation of a group $G$ into
the symmetric group $S_k$, we write $\rho^t(\gamma)$ for the permutation type of 
$\rho(\gamma) \in S_k$ with $\gamma \in G$.
In tables, we use '-', if no information exists, '?', if data could not be computed and '*' if data in
the annotated row is incomplete. The shortcut $x_y$ means $x$ repeated $y$ times.
All computations have been done either using software written by the author
% \cite{RefHaeSWdiscretegroups}, \cite{RefHaeSWmath}
or using the computer algebra system GAP \cite{RefGAP}.

\section{The action of braids}\label{SectionTheActionOfBraids}

For a finite group $G$, a generating system $\underline{\sigma} = (\sigma_1, \dots, \sigma_m) \in G^m$ satisfying
the relation $\sigma_1 \cdots \sigma_m = \iota$ is called a generating $m$-system of $G$.
The set of all generating $m$-systems of $G$ is denoted by
$$\Sigma_m(G) = \{ (\sigma_1, \dots, \sigma_m) \in G^m \mid \langle \sigma_1, \dots, \sigma_m \rangle = G,
\sigma_1 \cdots \sigma_m = \iota \}.$$
The group $Inn(G)$ of inner automorphisms of $G$ acts on $\Sigma_m(G)$ by simultaneous conjugation. 
The set of classes $[\underline{\sigma}] = [\sigma_1, \dots, \sigma_m]$ is denoted by
$$\Sigma^i_m(G) = \Sigma_m(G)/Inn(G).$$
For non-trivial conjugacy classes $C_1,\dots, C_m$ of $G$, $(C_1,\dots, C_m)$ is called a 
{\it class vector} of length $m$ of $G$ and the following sets can be defined:
$$\Sigma(C_1,\dots, C_m) = \{ (\sigma_1, \dots, \sigma_m) \in \Sigma_m(G) \mid \sigma_i \in C_i, i = 1, \dots, m\}$$
and
$$\Sigma^i(C_1,\dots, C_m) = \Sigma(C_1,\dots, C_m)/Inn(G).$$
A conjugacy class $C$ of $G$ is called {\it rational} if the values of the complex irreducible characters
of $G$ on $C$ are rational, i.e. if  
$$\mathbb{Q}_C = \mathbb{Q}(\{ \chi(C) \mid \chi \in Irr(G)\}) = \mathbb{Q}$$
and a class vector is called {\it rational} if all of its conjugacy classes $C_1,\dots, C_m$ are rational.
Further, a class vector is called {\it symmetric} if $C_1 = C_2 =\dots = C_m$.
Finally, the number of elements in $\Sigma^i(C_1,\dots, C_m)$ is denoted by $l^i(C_1,\dots, C_m)$.
\par
\noindent
\smallskip
Let $H_m = \langle \beta_2, \beta_3, \dots, \beta_m \rangle$ with the relations
\begin{equation} 
\label{eq:BraidRelation1}
\beta_i \beta_j = \beta_j \beta_i, 2 \leq i < j \leq m, j-i \neq 1
\end{equation}
\begin{equation} 
\label{eq:BraidRelation2}
\beta_i \beta_{i+1} \beta_i = \beta_{i+1} \beta_i \beta_{i+1}, 2 \leq i \leq m-1
\end{equation}
\begin{equation} 
\label{eq:BraidRelation3}
\beta_2 \cdots \beta_{m-1} \beta_m^2 \beta_{m-1} \cdots \beta_2 = \iota
\end{equation}
be the {\it full Hurwitz braid group}.
Then $H_m$ acts on $\Sigma^i_m(G)$ by
\begin{equation} 
\label{eq:BraidAction}
[\underline{\sigma}]^{\beta_i} = 
[\sigma_1, \dots, \sigma_{i-2}, \sigma_{i-1} \sigma_i \sigma^{-1}_{i-1}, \sigma_{i-1}, \sigma_{i+1}, \dots, \sigma_m]
\end{equation}
for $i = 2, \dots, m$, see \cite[III, 1.2, Theorem 1.6 ff.]{RefMM2018} and the action of $\beta_i^{-1}$ is given by
\begin{equation} 
\label{eq:BraidActionInvers}
[\underline{\sigma}]^{\beta_i^{-1}} = 
[\sigma_1, \dots, \sigma_{i-2}, \sigma_i, \sigma_i^{-1} \sigma_{i-1} \sigma_i, \sigma_{i+1}, \dots, \sigma_m]
\end{equation}
for $i = 2, \dots, m$.
\par
\noindent
For a class vector $(C_1,\dots, C_m)$ and $\pi \in S_m$, define $(C_1,\dots, C_m)^{\pi} = (C_{\pi(1)},\dots, C_{\pi(m)})$ and
$$(C_1,\dots, C_m)^{sy} = \{(C_1,\dots, C_m)^{\pi} \mid \pi \in S_m \}.$$
Then $H_m$ already acts on the set
$$\Sigma^i(C_1,\dots, C_m)^{sy} = \bigcup_{\pi \in S_m} {\Sigma^i(C_1,\dots, C_m)^{\pi}} \subseteq \Sigma^i_m(G).$$
Define $\rho_m$ to be the induced permutation representation of $H_m$ on the set $\Sigma^i(C_1,\dots, C_m)^{sy}$.
\par
\noindent
\begin{proposition}
\label{EpsilonHmActsCyclic}
Let $m \geq 3$, then the action of $\epsilon_m = \beta_2 \cdots \beta_m \in H_m$ 
on $[\underline{\sigma}]\in \Sigma^i(C_1,\dots, C_m)^{sy}$ is given by
$$[\underline{\sigma}]^{\epsilon_m} = [\sigma_2,\sigma_3,\dots,\sigma_m,\sigma_1].$$
\end{proposition}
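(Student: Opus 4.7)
The plan is to unpack the definition of $\epsilon_m = \beta_2\beta_3\cdots\beta_m$ by applying the generators one at a time, and then to use that we are working in $\Sigma^i$, i.e.\ modulo simultaneous conjugation by $Inn(G)$. Concretely, I want to prove by induction on $k$ with $2\le k\le m$ the auxiliary identity
\begin{equation*}
[\underline{\sigma}]^{\beta_2\cdots\beta_k} = [\,\sigma_1\sigma_2\sigma_1^{-1},\ \sigma_1\sigma_3\sigma_1^{-1},\ \ldots,\ \sigma_1\sigma_k\sigma_1^{-1},\ \sigma_1,\ \sigma_{k+1},\ \ldots,\ \sigma_m\,],
\end{equation*}
since then taking $k=m$ and conjugating through by $\sigma_1$ immediately yields the proposition.

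For the base case $k=2$, the formula (\ref{eq:BraidAction}) for $\beta_2$ gives exactly $[\sigma_1\sigma_2\sigma_1^{-1},\sigma_1,\sigma_3,\ldots,\sigma_m]$. For the inductive step, assume the formula holds for $k$ and apply $\beta_{k+1}$. In the tuple displayed above, the entries at positions $k$ and $k+1$ are $\sigma_1$ and $\sigma_{k+1}$, so (\ref{eq:BraidAction}) with $i=k+1$ replaces position $k$ by $\sigma_1\sigma_{k+1}\sigma_1^{-1}$ and position $k+1$ by $\sigma_1$, which is exactly the formula for $k+1$. All other positions are untouched, so the induction goes through cleanly.

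Setting $k=m$ produces
\begin{equation*}
[\underline{\sigma}]^{\epsilon_m} = [\,\sigma_1\sigma_2\sigma_1^{-1},\ \sigma_1\sigma_3\sigma_1^{-1},\ \ldots,\ \sigma_1\sigma_m\sigma_1^{-1},\ \sigma_1\,].
\end{equation*}
Since the brackets denote $Inn(G)$-orbits, I may simultaneously conjugate every entry by $\sigma_1$ (using the paper's convention $\sigma^{\tau}=\tau^{-1}\sigma\tau$, so $(\sigma_1\sigma_i\sigma_1^{-1})^{\sigma_1}=\sigma_i$ and $\sigma_1^{\sigma_1}=\sigma_1$) to obtain $[\sigma_2,\sigma_3,\ldots,\sigma_m,\sigma_1]$, as claimed.

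There is essentially no obstacle: the only delicate point is keeping the indexing of positions versus subscripts straight in the inductive step, and noticing that the final tuple lies again in $\Sigma^i(C_1,\ldots,C_m)^{sy}$ because a cyclic shift of the entries realises the cyclic permutation $\pi\in S_m$ of the class vector, so that the action of $\epsilon_m$ respects the set on which $\rho_m$ was defined.
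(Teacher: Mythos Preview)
Your proof is correct and follows essentially the same approach as the paper: both apply the generators $\beta_2,\ldots,\beta_m$ successively to obtain $[\sigma_1\sigma_2\sigma_1^{-1},\ldots,\sigma_1\sigma_m\sigma_1^{-1},\sigma_1]$ and then pass to the $Inn(G)$-class to reach $[\sigma_2,\ldots,\sigma_m,\sigma_1]$. You simply formalize the paper's ``$\dots$'' as an explicit induction on $k$, which is a harmless refinement.
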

\begin{proof}
We have 
$$[\underline{\sigma}]^{\epsilon_m} = [\underline{\sigma}]^{\beta_2 \cdots \beta_m} =$$
$$[\sigma_1 \sigma_2 \sigma_1^{-1},\sigma_1,\sigma_3,\dots,\sigma_m]^{\beta_3 \cdots \beta_m} =$$
$$[\sigma_1 \sigma_2 \sigma_1^{-1},\sigma_1 \sigma_3 \sigma_1^{-1},\sigma_1,\sigma_4,\dots,\sigma_m]^{\beta_4 \cdots \beta_m} =$$
$$\dots$$
$$[\sigma_1 \sigma_2 \sigma_1^{-1},\sigma_1 \sigma_3 \sigma_1^{-1},\dots,\sigma_1 \sigma_m \sigma_1^{-1},\sigma_1] =$$
$$[\sigma_2,\sigma_3,\dots,\sigma_m,\sigma_1].$$
\end{proof}
\noindent
The kernel of the natural projection
$$\pi_m : H_m \rightarrow S_m, \pi_m(\beta_i) = (i-1, i), i = 2, \dots, m$$
is called the {\it pure Hurwitz braid group} and denoted by $B_m = kernel(\pi_m)$.
We have
$$B_m = \langle \beta_{ij} \mid 1 \leq i < j \leq m \rangle$$ with
$$\beta_{ij} = \beta_{i+1}^{-1} \cdots \beta_{j-1}^{-1} \beta_j^2 \beta_{j-1} \cdots \beta_{i+1}, 1 \leq i < j \leq m,$$
$B_m \vartriangleleft H_m$ and $H_m/B_m \cong S_m$.
The group $B_m$ already acts on $\Sigma^i(C_1,\dots, C_m)$. See also \cite[III]{RefMM2018}.
Again we denote the induced permutation representation of this action with $\rho_m$ as
it is the restriction of $\rho_m$ from above to $B_m$.
For later usage, we state
\begin{proposition}
\label{PropertiesOfPermutationRepresentations}
Let $\rho : H_m \rightarrow S_k$ be any permutation representation of degree $k$, then
\begin{itemize}
\item[(a)]
$\rho(B_m) \leq A_k$
\item[(b)]
$\rho(H_m)/\rho(B_m) \cong U_m$ with $U_m \leq S_m$
\end{itemize}
\end{proposition}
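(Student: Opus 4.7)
The plan is to deduce (a) directly from the presentation of $B_m$ via the $\beta_{ij}$, and to deduce (b) from the fact that $H_m/B_m \cong S_m$, combined with the observation that every quotient of $S_m$ embeds back into $S_m$. For (a), I would use the defining expression
$$\beta_{ij} = \beta_{i+1}^{-1} \cdots \beta_{j-1}^{-1} \beta_j^2 \beta_{j-1} \cdots \beta_{i+1}$$
given in the text, which exhibits each generator $\beta_{ij}$ of $B_m$ as a conjugate of the square $\beta_j^2$ in $H_m$. Applying $\rho$, the element $\rho(\beta_{ij}) \in S_k$ is a conjugate of the square $\rho(\beta_j)^2$. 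Since any square in a symmetric group is an even permutation and $A_k \trianglelefteq S_k$, we obtain $\rho(\beta_{ij}) \in A_k$ for all $1 \leq i < j \leq m$. Because these elements generate $B_m$, it follows that $\rho(B_m) \leq A_k$.

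For (b), I would first note that $B_m \trianglelefteq H_m$ forces $\rho(B_m) \trianglelefteq \rho(H_m)$, so the quotient $\rho(H_m)/\rho(B_m)$ is well defined. The composite surjection $H_m \twoheadrightarrow \rho(H_m) \twoheadrightarrow \rho(H_m)/\rho(B_m)$ has kernel containing $B_m$, so it factors through $H_m/B_m$. Since the natural projection $\pi_m$ identifies $H_m/B_m$ with $S_m$, this yields an induced surjection $S_m \twoheadrightarrow \rho(H_m)/\rho(B_m)$, hence $\rho(H_m)/\rho(B_m) \cong S_m/N$ for some normal subgroup $N \trianglelefteq S_m$.

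To finish (b) I would verify that any such quotient of $S_m$ is (isomorphic to) a subgroup of $S_m$, and take $U_m$ to be the image of any chosen embedding. Enumerating the normal subgroups of $S_m$: for $m = 3$ and $m \geq 5$ the only possibilities are $1$, $A_m$, $S_m$, with quotients $S_m$, $\mathbb{Z}/2\mathbb{Z}$, $1$, each of which sits inside $S_m$; for $m = 4$ there is additionally $V_4 \trianglelefteq S_4$, and $S_4/V_4 \cong S_3 \leq S_4$. The only mild subtlety is this exceptional case $m = 4$, where the Klein four-group produces the atypical quotient $S_3$; otherwise the argument is routine. Putting the two pieces together gives $\rho(H_m)/\rho(B_m) \cong U_m$ with $U_m \leq S_m$, as claimed.
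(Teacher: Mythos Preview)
Your proof is correct and follows the same line as the paper's. For (a) you spell out why $\mathrm{sign}(\rho(\beta_{ij}))=1$ (each $\beta_{ij}$ is a conjugate of a square), which is exactly what the paper asserts in one line; for (b) both arguments rest on $H_m/B_m\cong S_m$, but you go further than the paper by explicitly checking that every quotient of $S_m$ embeds back into $S_m$ (including the exceptional $m=4$ case), whereas the paper simply records that (b) ``comes from'' this isomorphism.
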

\begin{proof}
We have $sign(\rho(\beta_{ij})) = 1$ for $1 \leq i < j \leq m$, thus $\rho(\beta_{ij}) \in A_m$ and (a) follows.
Part (b) comes from $H_m/B_m \cong S_m$.
\end{proof}
\noindent
For $m \geq 3$, the center $Z(H_m)$ of $H_m$ has order $2$ and is generated by 
$$\Delta_m = \epsilon_m^m = (\beta_2 \cdots \beta_m)^m,$$
see \cite[III, Corollary 1.8]{RefMM2018}.
\begin{proposition}
\label{CenterHmActsTrivial}
Let $m \geq 3$, then $Z(H_m) = \langle \Delta_m \rangle$ acts trivial on $\Sigma^i(C_1,\dots, C_m)$.
\end{proposition}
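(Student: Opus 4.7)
The plan is to reduce the statement immediately to Proposition \ref{EpsilonHmActsCyclic}. That proposition describes $\epsilon_m$ as a left cyclic shift on any class $[\sigma_1,\dots,\sigma_m] \in \Sigma^i(C_1,\dots,C_m)^{sy}$, namely $[\underline{\sigma}]^{\epsilon_m} = [\sigma_2,\dots,\sigma_m,\sigma_1]$. I would therefore iterate: since the shifted tuple still lies in $\Sigma^i(C_1,\dots,C_m)^{sy}$ (merely in a cyclically permuted stratum $\Sigma^i(C_2,\dots,C_m,C_1)$), Proposition \ref{EpsilonHmActsCyclic} applies again and yields $[\sigma_3,\sigma_4,\dots,\sigma_m,\sigma_1,\sigma_2]$, and so on. After $m$ applications the tuple has been shifted by $m$ positions and each entry has returned to its original place, so
$$[\underline{\sigma}]^{\Delta_m} = [\underline{\sigma}]^{\epsilon_m^m} = [\sigma_1,\dots,\sigma_m] = [\underline{\sigma}].$$
Hence $\Delta_m$ fixes every element of $\Sigma^i(C_1,\dots,C_m)^{sy}$.

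To conclude the proposition as stated, I would then restrict to the subset $\Sigma^i(C_1,\dots,C_m) \subseteq \Sigma^i(C_1,\dots,C_m)^{sy}$. The subtlety worth making explicit is that a single application of $\epsilon_m$ carries $\Sigma^i(C_1,\dots,C_m)$ into the different stratum $\Sigma^i(C_2,\dots,C_m,C_1)$, so $\epsilon_m$ itself is in general not an endomorphism of $\Sigma^i(C_1,\dots,C_m)$; it is precisely the $m$-fold iterate $\Delta_m$ that also restores the class vector $(C_1,\dots,C_m)$ to itself. This is exactly why Proposition \ref{EpsilonHmActsCyclic} was formulated on the symmetric union in the first place, and it is what makes the restriction to $\Sigma^i(C_1,\dots,C_m)$ legitimate.

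There is no real obstacle here: once the cyclic-shift description of $\epsilon_m$ is available, the proof is a one-line induction on the number of iterations together with the trivial observation that shifting an $m$-tuple by $m$ positions gives back the tuple. In particular, no further appeal to the braid relations \eqref{eq:BraidRelation1}--\eqref{eq:BraidRelation3} or to the identification $Z(H_m) = \langle \Delta_m \rangle$ from \cite[III, Corollary 1.8]{RefMM2018} is required for the proof; the latter is merely what motivates the statement.
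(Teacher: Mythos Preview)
Your proposal is correct and takes essentially the same approach as the paper: reduce to Proposition~\ref{EpsilonHmActsCyclic} and observe that $m$ iterations of the cyclic shift return the tuple to itself. The paper's proof is a single line to this effect; your additional remarks about intermediate strata and the restriction from $\Sigma^i(C_1,\dots,C_m)^{sy}$ to $\Sigma^i(C_1,\dots,C_m)$ are correct elaborations but not required.
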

\begin{proof}
We have $[\underline{\sigma}]^{\Delta_m} = [\underline{\sigma}]^{\epsilon_m^m} = [\underline{\sigma}]$
by Proposition~\ref{EpsilonHmActsCyclic}.
\end{proof}
\noindent
The next proposition allows us to deduce $\pi_m(\beta) \in S_m$ for $\beta \in H_m$ from its action
on $[\underline{\sigma}]$.
\begin{proposition}
\label{DeducePiFromAction}
Let $m \geq 3$, $\beta \in H_m$ and $[\underline{\sigma}] \in \Sigma^i(C_1,\dots, C_m)$.
Then we have
$$[\underline{\sigma}]^{\beta} \in \Sigma^i(C_1,\dots, C_m)^{\pi_m(\beta)^{-1}}$$
with the natural projection $\pi_m$.
\end{proposition}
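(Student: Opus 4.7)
The plan is to verify the statement first on the generators of $H_m$ and then extend to all of $H_m$ by induction on word length.

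For $\beta = \beta_i$, formula \eqref{eq:BraidAction} tells us that $[\underline{\sigma}]^{\beta_i}$ has $\sigma_{i-1}\sigma_i\sigma_{i-1}^{-1} \in C_i$ at position $i-1$ and $\sigma_{i-1} \in C_{i-1}$ at position $i$, while all other entries are unchanged. Hence the class vector of $[\underline{\sigma}]^{\beta_i}$ is $(C_1,\dots,C_{i-2},C_i,C_{i-1},C_{i+1},\dots,C_m)$, which by the definition $(C_1,\dots,C_m)^\pi = (C_{\pi(1)},\dots,C_{\pi(m)})$ equals $(C_1,\dots,C_m)^{(i-1,i)}$. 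Since the transposition is self-inverse and $\pi_m(\beta_i) = (i-1,i)$, this is $(C_1,\dots,C_m)^{\pi_m(\beta_i)^{-1}}$, as required. An identical direct computation using \eqref{eq:BraidActionInvers} handles $\beta_i^{-1}$.

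For a general braid, I would write $\beta = \delta_1 \cdots \delta_k$ as a word in the $\beta_j^{\pm 1}$ and induct on $k$. Applying $\delta_k$ to $[\underline{\sigma}]^{\delta_1 \cdots \delta_{k-1}}$ via the base case, using the inductive hypothesis that this element has class vector $(C_1,\dots,C_m)^{\pi_m(\delta_1 \cdots \delta_{k-1})^{-1}}$ in place of $(C_1,\dots,C_m)$, yields a class vector obtained by further applying the transposition $\pi_m(\delta_k)^{-1}$. Combining via the iteration rule $((C_1,\dots,C_m)^\alpha)^\tau = (C_1,\dots,C_m)^{\tau\alpha}$ and the homomorphism identity $\pi_m(\delta_k)^{-1}\pi_m(\delta_1 \cdots \delta_{k-1})^{-1} = \pi_m(\delta_1 \cdots \delta_k)^{-1}$ closes the induction.

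The main obstacle, such as it is, lies not in any deep content but in keeping the composition conventions straight: the permutation on the class vector accumulates in the opposite order to the braid multiplication in $H_m$, which is precisely why the inverse appears on $\pi_m(\beta)$ in the statement. Once the conventions are fixed, the argument is a transparent verification on generators together with an inductive bookkeeping step.
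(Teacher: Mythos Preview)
Your proof is correct and follows essentially the same route as the paper's: verify the claim on the generators $\beta_i^{\pm 1}$ via \eqref{eq:BraidAction} and \eqref{eq:BraidActionInvers}, then extend to arbitrary words by composition. Your version is in fact slightly more streamlined, since the paper first checks all length-two words $\beta_i^{\pm 1}\beta_j^{\pm 1}$ case by case before extending, whereas you proceed directly to the inductive step using the iteration rule $((C_1,\dots,C_m)^\alpha)^\tau = (C_1,\dots,C_m)^{\tau\alpha}$.
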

\begin{proof}
Proposition~\ref{DeducePiFromAction} is true for 
$\beta \in \{ \beta_2,\beta_3,\dots,\beta_m,\beta_2^{-1},\beta_3^{-1},\dots,\beta_m^{-1}  \}$
by (\ref{eq:BraidAction}) and (\ref{eq:BraidActionInvers}).
For $i,j \in \{ 2,3,\dots,m \}$ and $[\underline{\sigma}] \in \Sigma^i(C_1,\dots, C_m)$ we have
\begin{equation}
\pi_m(\beta_i \beta_j) = (i-1,i)(j-1,j) =
\begin{cases}
(i-1,i)(j-1,j) & |i-j| > 1 \\
(j-1,j,j+1) & i = j+1 \\
(i-1,i+1,i) & j = i+1 \\
\iota & i = j
\end{cases}
\end{equation}
and
\begin{equation}
[\underline{\sigma}]^{\beta_i \beta_j} \in
\begin{cases}
\Sigma^i(C_1,\dots, C_m)^{(j-1,j)(i-1,i)} & |i-j| > 1 \\
\Sigma^i(C_1,\dots, C_m)^{(j-1,j+1,j)} & i = j+1 \\
\Sigma^i(C_1,\dots, C_m)^{(i-1,i,i+1)} & j = i+1 \\
\Sigma^i(C_1,\dots, C_m)^{\iota} & i = j
\end{cases}
\end{equation}
Thus Proposition~\ref{DeducePiFromAction} is true for $\beta = \beta_i \beta_j$ and $i,j \in \{ 2,3,\dots,m \}$
and we get
$$[\underline{\sigma}]^{\beta_i \beta_j} \in \Sigma^i(C_1,\dots, C_m)^{\pi_m(\beta_j)\pi_m(\beta_i)}.$$
\par
\noindent
In an analogous manner we can verify Proposition~\ref{DeducePiFromAction} for the elements
$\beta_i^{-1} \beta_j, \beta_i \beta_j^{-1}, \beta_i^{-1} \beta_j^{-1}, i,j \in \{ 2,3,\dots,m \}$.
Now the proof can be extended to
$\beta = \beta_{k_1}^{e_1} \cdots \beta_{k_l}^{e_1}$ with $e_i = \pm 1$
using $\pi_m(\beta_i) = \pi_m(\beta_i)^{-1}$ and
$[\underline{\sigma}]^{\beta_i}, [\underline{\sigma}]^{\beta_i^{-1}} \in \Sigma^i(C_1,\dots, C_m)^{(i-1,i)}$
for $i=2,\dots,m$.
\end{proof}
\begin{example}
\label{DeducePiFromActionForEpsilon}
From Proposition~\ref{EpsilonHmActsCyclic} follows
$$[\underline{\sigma}]^{\epsilon_m} \in \Sigma^i(C_2,\dots, C_m,C_1) = 
\Sigma^i(C_1,\dots, C_m)^{(1,2,\dots,m)}$$
and by direct computation we get
$$\pi_m(\epsilon_m)^{-1} = ((1,2)(2,3)\dots(m-1,m))^{-1} = $$
$$(m-1,m)\dots(2,3)(1,2) = (1,2,\dots,m)$$
which confirms Proposition~\ref{DeducePiFromAction}.
\end{example}

\section{The action of braids in dimension 4}\label{SectionTheActionOfBraidsInDimension4}

For $m=4$, the full Hurwitz braid group
$$H_4 = \langle \beta_2, \beta_3, \beta_4 \mid \beta_2 \beta_4 = \beta_4 \beta_2, \beta_2 \beta_3 \beta_2 = 
\beta_3 \beta_2 \beta_3,$$
$$\beta_3 \beta_4 \beta_3 = \beta_4 \beta_3 \beta_4, \beta_2 \beta_3 {\beta_4}^2 \beta_3 \beta_2 = \iota \rangle$$
acts on $\Sigma^i(C_1,C_2,C_3,C_4)^{sy}$ by
$$[\underline{\sigma}]^{\beta_2} = [\sigma_1 \sigma_2 \sigma_1^{-1}, \sigma_1, \sigma_3, \sigma_4],$$
$$[\underline{\sigma}]^{\beta_3} = [\sigma_1, \sigma_2 \sigma_3 \sigma_2^{-1}, \sigma_2, \sigma_4],$$
$$[\underline{\sigma}]^{\beta_4} = [\sigma_1, \sigma_2, \sigma_3 \sigma_4 \sigma_3^{-1}, \sigma_3].$$
For the pure Hurwitz group 
$$B_4=\langle \beta_{12}, \beta_{13}, \beta_{14}, \beta_{23}, \beta_{24}, \beta_{34} \rangle$$
with the elements
$\beta_{12}=\beta_2^2$,
$\beta_{13}=\beta_2^{-1} \beta_3^2 \beta_2$,
$\beta_{14}=\beta_2^{-1} \beta_3^{-1} \beta_4^2 \beta_3 \beta_2$,
$\beta_{23}=\beta_3^2$,
$\beta_{24}=\beta_3^{-1} \beta_4^2 \beta_3$ and
$\beta_{34}=\beta_4^2$, we get
\begin{proposition}
\label{ActionOfPureHurwitzGroup}
$B_4$ acts on $\Sigma^i(C_1,C_2,C_3,C_4)$ by
$$[\underline{\sigma}]^{\beta^k_{12}} =
[(\sigma_3 \sigma_4)^{-k} \sigma_1 (\sigma_3 \sigma_4)^k, (\sigma_3 \sigma_4)^{-k} \sigma_2 (\sigma_3 \sigma_4)^k,
\sigma_3, \sigma_4],$$
$$[\underline{\sigma}]^{\beta^k_{13}} =
[(\sigma_2 \sigma_4)^{-k} \sigma_1 (\sigma_2 \sigma_4)^k, \sigma_2,
(\sigma_4 \sigma_2)^{-k} \sigma_3 (\sigma_4 \sigma_2)^k, \sigma_4],$$
$$[\underline{\sigma}]^{\beta^k_{14}} =
[(\sigma_2 \sigma_3)^{-k} \sigma_1 (\sigma_2 \sigma_3)^k, \sigma_2, \sigma_3,
(\sigma_2 \sigma_3)^{-k} \sigma_4 (\sigma_2 \sigma_3)^k],$$
$$
[\underline{\sigma}]^{\beta_{23}} = [\underline{\sigma}]^{\beta_{14}^{-1}},
[\underline{\sigma}]^{\beta_{24}} = [\underline{\sigma}]^{\beta_{14}^{-1} \beta_{12}^{-1}},
[\underline{\sigma}]^{\beta_{34}} = [\underline{\sigma}]^{\beta_{12}},
[\underline{\sigma}]^{\beta_{12} \beta_{13} \beta_{14}} = [\underline{\sigma}]
$$
for $k \in \mathbb{N}$.
\end{proposition}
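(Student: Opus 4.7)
The plan is to compute the action of each $\beta_{ij}$ on $[\underline{\sigma}]$ by iterating formulas (\ref{eq:BraidAction}) and (\ref{eq:BraidActionInvers}) across the defining word in $\beta_2,\beta_3,\beta_4$, and then to simplify the resulting tuple by substituting through the product relation $\sigma_1\sigma_2\sigma_3\sigma_4=\iota$ (so $\sigma_1\sigma_2=(\sigma_3\sigma_4)^{-1}$, together with its cyclic variants) and by applying a suitable inner $G$-automorphism, which is legitimate because we work modulo $Inn(G)$.

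First I would handle $\beta_{12}^k=\beta_2^{2k}$. The case $k=1$ is direct: two applications of $\beta_2$ produce $[\sigma_1\sigma_2\sigma_1\sigma_2^{-1}\sigma_1^{-1},\sigma_1\sigma_2\sigma_1^{-1},\sigma_3,\sigma_4]$, and replacing $\sigma_1\sigma_2$ by $(\sigma_3\sigma_4)^{-1}$ in the first two entries recovers the claimed form. The key structural observation is that $\sigma_3$ and $\sigma_4$ are fixed throughout the iteration, so $\sigma_3\sigma_4$ serves as an invariant conjugator, and a short induction on $k$ delivers the general formula. The same pattern works for $\beta_{13}^k$ (with $\sigma_2,\sigma_4$ fixed and $\sigma_2\sigma_4$ the invariant conjugator, using $\sigma_1\sigma_3 = \sigma_2^{-1}(\sigma_2\sigma_4)^{-1}\sigma_2$-type manipulations that come out of the product relation) and for $\beta_{14}^k$ (with $\sigma_2,\sigma_3$ fixed and $\sigma_2\sigma_3$ as conjugator). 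The four remaining identities are then consequences: $[\underline{\sigma}]^{\beta_{34}}=[\underline{\sigma}]^{\beta_{12}}$ follows by direct expansion of $\beta_4^2$ followed by a conjugation by $\sigma_3\sigma_4$; the identities for $\beta_{23}$ and $\beta_{24}$ are verified by expanding both sides via the formulas just established and matching through the product relation. Finally, $[\underline{\sigma}]^{\beta_{12}\beta_{13}\beta_{14}}=[\underline{\sigma}]$ reflects the sphere-braid relation (\ref{eq:BraidRelation3}) in its pure-braid form, expressing that the pure braid pulling strand $1$ once around each of strands $2,3,4$ is contractible on the sphere; concretely, composing the three formulas yields a conjugator for $\sigma_1$ that telescopes to $\iota$ via $\sigma_1\sigma_2\sigma_3\sigma_4=\iota$.

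The hard part is purely notational. Iterating (\ref{eq:BraidAction}) over long words rapidly produces unwieldy letter-sequences, and equality of $Inn(G)$-classes is not visible at the level of the tuple components but only after one isolates the correct conjugating element. The most delicate manipulation is the identity for $\beta_{24}$, where the conjugator $\sigma_2\sigma_3$ inherited from $\beta_{14}^{-1}$ must be reconciled with $\sigma_3\sigma_4$ coming from $\beta_{12}^{-1}$; here the product relation is exactly what makes the two sides coincide modulo $Inn(G)$, and this is the step where a symbolic double-check is most desirable.
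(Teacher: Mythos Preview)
Your proposal is correct and follows exactly the approach the paper indicates: the paper's proof reads in its entirety ``Proposition~\ref{ActionOfPureHurwitzGroup} can be verified by direct computation and induction over $k$,'' and what you have written is a faithful expansion of that sentence. Your identification of the fixed entries ($\sigma_3,\sigma_4$ for $\beta_{12}$, etc.), the use of the product relation $\sigma_1\sigma_2\sigma_3\sigma_4=\iota$ to rewrite conjugators, and the passage to $Inn(G)$-classes are precisely the mechanics that make the direct computation go through.
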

\noindent
Proposition~\ref{ActionOfPureHurwitzGroup} can be verified by direct computation and induction over $k$.
\begin{corollary}
\label{ActionOfPureHurwitzGroupByBeta1j}
The action of $B_4$ on $\Sigma^i(C_1,C_2,C_3,C_4)$ is already determined by the action
of $\beta_{1j}$ for $j=2,3,4$ resp. $j=2,3$.
\end{corollary}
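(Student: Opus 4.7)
The plan is to read off everything directly from Proposition~\ref{ActionOfPureHurwitzGroup}, which already lists the action of every generator of $B_4$. Since
$$B_4 = \langle \beta_{12}, \beta_{13}, \beta_{14}, \beta_{23}, \beta_{24}, \beta_{34} \rangle,$$
it suffices to express each $\beta_{ij}$ with $2 \leq i < j \leq 4$ as a word in the $\beta_{1k}$'s in the induced permutation representation on $\Sigma^i(C_1,C_2,C_3,C_4)$.

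For the first assertion, I would quote the last line of Proposition~\ref{ActionOfPureHurwitzGroup}, which states
$$[\underline{\sigma}]^{\beta_{23}} = [\underline{\sigma}]^{\beta_{14}^{-1}}, \quad
[\underline{\sigma}]^{\beta_{24}} = [\underline{\sigma}]^{\beta_{14}^{-1}\beta_{12}^{-1}}, \quad
[\underline{\sigma}]^{\beta_{34}} = [\underline{\sigma}]^{\beta_{12}}.$$
Thus $\beta_{23},\beta_{24},\beta_{34}$ already act as words in $\beta_{12}^{\pm 1}$ and $\beta_{14}^{\pm 1}$, and together with $\beta_{13}$ the three elements $\beta_{1j}$ for $j=2,3,4$ generate the full image of $B_4$ in the permutation representation on $\Sigma^i(C_1,C_2,C_3,C_4)$.

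For the stronger statement with $j=2,3$, I would invoke the remaining identity $[\underline{\sigma}]^{\beta_{12}\beta_{13}\beta_{14}} = [\underline{\sigma}]$ from Proposition~\ref{ActionOfPureHurwitzGroup}. This gives
$$[\underline{\sigma}]^{\beta_{14}} = [\underline{\sigma}]^{\beta_{13}^{-1}\beta_{12}^{-1}},$$
so $\beta_{14}$ itself is determined by $\beta_{12}$ and $\beta_{13}$. Substituting this into the expressions for $\beta_{23},\beta_{24},\beta_{34}$ above, every generator of $B_4$ acts on $\Sigma^i(C_1,C_2,C_3,C_4)$ as a word in $\beta_{12}^{\pm 1}$ and $\beta_{13}^{\pm 1}$, which finishes the proof.

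There is essentially no obstacle here: the corollary is a bookkeeping consequence of the four identities collected in the last display of Proposition~\ref{ActionOfPureHurwitzGroup} together with the standard generating set of $B_4$. The only care needed is to remember that the equalities hold only in the induced permutation representation on $\Sigma^i(C_1,C_2,C_3,C_4)$ (i.e.\ after passing to conjugacy classes in $G$), not in $B_4$ itself.
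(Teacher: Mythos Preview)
Your proof is correct and is exactly the argument the paper has in mind: the corollary is stated without proof immediately after Proposition~\ref{ActionOfPureHurwitzGroup}, and the identities in the last display of that proposition are precisely the ones you use to eliminate first $\beta_{23},\beta_{24},\beta_{34}$ and then $\beta_{14}$.
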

\noindent
For the induced permutation representation $\rho_4$ of $B_4$ on $\Sigma^i(C_1,C_2,C_3,C_4)$ we obtain the following corollaries.
\begin{corollary}
\label{ActionOfPureHurwitzGroupByBeta1jCase4}
$\rho_4(B_4) = \rho_4(\langle \beta_{12}, \beta_{13}, \beta_{14} \rangle) = \rho_4(\langle \beta_{12}, \beta_{13} \rangle)$.
\end{corollary}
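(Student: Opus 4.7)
The plan is to read both equalities off directly from the identities collected at the end of Proposition~\ref{ActionOfPureHurwitzGroup}; nothing else is needed.

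First I would handle $\rho_4(B_4) = \rho_4(\langle \beta_{12}, \beta_{13}, \beta_{14} \rangle)$, which is essentially a restatement of Corollary~\ref{ActionOfPureHurwitzGroupByBeta1j}. Since $B_4$ is generated by the six elements $\beta_{ij}$ with $1 \le i < j \le 4$, it suffices to show that $\rho_4(\beta_{23})$, $\rho_4(\beta_{24})$, $\rho_4(\beta_{34})$ all lie in the subgroup generated by $\rho_4(\beta_{12})$, $\rho_4(\beta_{13})$, $\rho_4(\beta_{14})$. The pointwise identities $[\underline{\sigma}]^{\beta_{23}} = [\underline{\sigma}]^{\beta_{14}^{-1}}$, $[\underline{\sigma}]^{\beta_{24}} = [\underline{\sigma}]^{\beta_{14}^{-1}\beta_{12}^{-1}}$ and $[\underline{\sigma}]^{\beta_{34}} = [\underline{\sigma}]^{\beta_{12}}$, valid on all of $\Sigma^i(C_1,C_2,C_3,C_4)$ by Proposition~\ref{ActionOfPureHurwitzGroup}, give exactly this. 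The reverse inclusion is automatic since $\beta_{12}, \beta_{13}, \beta_{14} \in B_4$.

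For the second equality $\rho_4(\langle \beta_{12}, \beta_{13}, \beta_{14} \rangle) = \rho_4(\langle \beta_{12}, \beta_{13} \rangle)$ I would invoke the last identity of Proposition~\ref{ActionOfPureHurwitzGroup}, namely $[\underline{\sigma}]^{\beta_{12}\beta_{13}\beta_{14}} = [\underline{\sigma}]$ for every $[\underline{\sigma}] \in \Sigma^i(C_1,C_2,C_3,C_4)$. This says that the permutation $\rho_4(\beta_{12}\beta_{13}\beta_{14})$ is trivial, so one can solve for $\rho_4(\beta_{14})$ as a word in $\rho_4(\beta_{12})$ and $\rho_4(\beta_{13})$. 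Hence $\rho_4(\beta_{14}) \in \rho_4(\langle \beta_{12}, \beta_{13}\rangle)$, and the chain of inclusions closes.

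The only point requiring any care is the direction of composition: since $[\underline{\sigma}]^{\beta}$ denotes a right action, the corresponding equality of permutations must be read with the appropriate convention before inverting to isolate $\rho_4(\beta_{14})$. This is purely a bookkeeping check and not a genuine obstacle; I expect the whole proof to fit in a few lines.
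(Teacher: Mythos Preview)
Your proposal is correct and matches the paper's approach exactly: the paper states this corollary without proof, as an immediate consequence of the identities in Proposition~\ref{ActionOfPureHurwitzGroup} (indeed Corollary~\ref{ActionOfPureHurwitzGroupByBeta1j} already records the same fact in slightly different language). Your write-up simply spells out the two-line deduction that the paper leaves implicit.
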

\begin{corollary}
\label{PropertiesOfPermutationRepresentationsCase4}
If $\rho_4(\beta_{1j}), j = 2,3,4$ are involutions, then $\rho_4(B_4) \cong C_2 \times C_2$.
\end{corollary}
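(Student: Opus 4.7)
The plan is to combine Corollary~\ref{ActionOfPureHurwitzGroupByBeta1jCase4} with the last identity in Proposition~\ref{ActionOfPureHurwitzGroup} and then do elementary group theory on three involutions whose product is trivial.

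First I would set $a = \rho_4(\beta_{12})$, $b = \rho_4(\beta_{13})$, $c = \rho_4(\beta_{14})$. By Corollary~\ref{ActionOfPureHurwitzGroupByBeta1jCase4}, $\rho_4(B_4) = \langle a,b,c\rangle$. The relation $[\underline{\sigma}]^{\beta_{12}\beta_{13}\beta_{14}} = [\underline{\sigma}]$ from Proposition~\ref{ActionOfPureHurwitzGroup} yields $abc = \iota$, hence $c = (ab)^{-1}$. Because $a$ and $b$ are involutions, this simplifies to $c = b^{-1}a^{-1} = ba$, so in fact $\rho_4(B_4) = \langle a,b\rangle$.

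Next I would exploit the hypothesis that $c$ is also an involution. From $c = ba$ and $c^2 = \iota$ I obtain $baba = \iota$, equivalently $(ba)^{-1} = ba$, and applying the involution property once more gives $ab = ba$. Thus $a$ and $b$ commute, so $\langle a,b\rangle = \{\iota, a, b, ab\}$ is an elementary abelian $2$-group of rank at most $2$.

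To finish I would rule out degenerate cases. As involutions, $a$, $b$, $c$ are all nontrivial; and if $a = b$ held, then $c = ba = a^2 = \iota$, contradicting the involution hypothesis on $c$. Hence $a \neq b$ and both are nontrivial, forcing $\langle a,b\rangle \cong C_2 \times C_2$. There is no real obstacle here: once the identity $\beta_{12}\beta_{13}\beta_{14} \equiv \iota$ on $\Sigma^i(C_1,C_2,C_3,C_4)$ is invoked, the argument is a two‑line calculation with three involutions.
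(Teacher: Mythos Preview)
Your proof is correct and follows essentially the same approach as the paper: both use the relation $\rho_4(\beta_{12})\rho_4(\beta_{13})\rho_4(\beta_{14})=\iota$ from Proposition~\ref{ActionOfPureHurwitzGroup} together with the involution hypothesis to force commutativity of the generators. Your version is in fact slightly more careful, as you explicitly rule out the degenerate cases $a=b$ and $a=\iota$, which the paper glosses over.
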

\begin{proof}
From $\rho_4(\beta_{12} \beta_{13} \beta_{14}) = \iota$ we get
$\rho_4(\beta_{12}) \rho_4(\beta_{13}) = \rho_4(\beta_{14})$ and by taking the inverse also
$\rho_4(\beta_{13}) \rho_4(\beta_{12}) = \rho_4(\beta_{14})$,
thus $\rho_4(\beta_{12}) \rho_4(\beta_{13}) = \rho_4(\beta_{13}) \rho_4(\beta_{12})$ and finally
$$\rho_4(B_4) = \langle \rho_4(\beta_{12}), \rho_4(\beta_{13}) \rangle =
\langle \rho(\beta_{12}) \rangle \times \langle \rho(\beta_{13}) \rangle \cong C_2 \times C_2.$$
\end{proof}
\begin{corollary}
\label{ActionOfPureHurwitzGroupByConjugation}
The length of the cycles of $\rho_4(\beta_{1j})$,
$j=2,3,4$ is bounded above by the orders $o(\sigma_3 \sigma_4)$, $o(\sigma_2 \sigma_4)=o(\sigma_4 \sigma_2)$
resp. $o(\sigma_2 \sigma_3)$ and $B_4$ acts by conjugation on $\Sigma^i(C_1,C_2,C_3,C_4)$.
\end{corollary}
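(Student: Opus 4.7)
The plan is to read off both assertions directly from Proposition~\ref{ActionOfPureHurwitzGroup}. For the cycle-length bound, I would fix a cycle of $\rho_4(\beta_{12})$ and observe that $\beta_{12}$ leaves the last two coordinates $\sigma_3,\sigma_4$ untouched; hence the element $\tau := \sigma_3\sigma_4$, and in particular its order $o(\tau)$, is constant along the cycle, so the bound to be proved is well-posed. Proposition~\ref{ActionOfPureHurwitzGroup} gives that the $k$-fold iterate of $\beta_{12}$ conjugates both $\sigma_1$ and $\sigma_2$ by $\tau^k$ and fixes $\sigma_3,\sigma_4$. Taking $k = o(\tau)$ yields $\tau^k = \iota$, hence $[\underline{\sigma}]^{\beta_{12}^{o(\tau)}} = [\underline{\sigma}]$, which forces the cycle length to divide $o(\tau)$. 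The same argument handles $\beta_{13}$ and $\beta_{14}$; for $\beta_{13}$ I would use that the two conjugating elements $\sigma_2\sigma_4$ and $\sigma_4\sigma_2$ are $\sigma_2$-conjugate via $\sigma_4\sigma_2 = \sigma_2^{-1}(\sigma_2\sigma_4)\sigma_2$, so they have the same order and both become trivial when raised to it.

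For the second assertion, each formula in Proposition~\ref{ActionOfPureHurwitzGroup} for $\beta_{1j}^k$ has the shape ``replace each $\sigma_i$ by $g_i^{-1}\sigma_i g_i$ for suitable $g_i \in G$, possibly with $g_i = \iota$''. Thus every generator $\beta_{1j}$ acts by coordinate-wise conjugation and in particular preserves each class $C_i$, so the action is well-defined on $\Sigma^i(C_1,C_2,C_3,C_4)$ rather than on the larger symmetrised set. By Corollary~\ref{ActionOfPureHurwitzGroupByBeta1jCase4}, $\rho_4(B_4)$ is generated by $\rho_4(\beta_{12}),\rho_4(\beta_{13}),\rho_4(\beta_{14})$, and a composition of coordinate-wise conjugations is again a coordinate-wise conjugation (with the conjugating elements updated to reflect the intermediate changes of coordinates). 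Hence the whole of $B_4$ acts by conjugation on $\Sigma^i(C_1,C_2,C_3,C_4)$.

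I do not expect any real obstacle here: the corollary is essentially a direct unpacking of Proposition~\ref{ActionOfPureHurwitzGroup} together with Corollary~\ref{ActionOfPureHurwitzGroupByBeta1jCase4}. The only subtle point, which I flagged above, is verifying that the stated cycle-length bound is attached to the cycle and not merely to a single representative $[\underline{\sigma}]$; this is immediate from the fact that the two coordinates generating the conjugating element are precisely those that $\beta_{1j}$ fixes, so the bound is an invariant of the cycle.
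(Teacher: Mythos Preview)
Your proposal is correct and follows exactly the route the paper intends: the corollary is stated without proof immediately after Proposition~\ref{ActionOfPureHurwitzGroup}, and your argument is precisely the direct reading of those formulas (including the observation that the fixed coordinates make the bound constant along each cycle, so it in fact divides rather than merely bounds). There is nothing to add.
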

\noindent
Define $$S(C_1,C_2,C_3,C_4) = \{\pi \in S_4 \mid (C_{\pi(1)},C_{\pi(2)},C_{\pi(3)},C_{\pi(4)}) = (C_1,C_2,C_3,C_4) \}$$
to be the group of symmetries of the class vector $(C_1,C_2,C_3,C_4)$.
\par
\noindent
If $S(C_1,C_2,C_3,C_4) > I$, so-called {\it topological automorphisms} 
(see \cite[5.]{RefMat1989} and \cite[1.]{RefHae1991})
$\eta_{23}$, $\eta_{34}$ and $\eta_{234}$ act on
$\Sigma^i(C_1,C_2,C_3,C_4)$ by
$$[\underline{\sigma}]^{\eta_{23}} = 
[\sigma_2^{-1} \sigma_1 \sigma_2,\sigma_3,\sigma_4 \sigma_2 \sigma_4^{-1},\sigma_4],$$
$$[\underline{\sigma}]^{\eta_{34}} = 
[\sigma_4 \sigma_1 \sigma_4^{-1},\sigma_2,\sigma_2^{-1} \sigma_4 \sigma_2,\sigma_3],$$
$$[\underline{\sigma}]^{\eta_{234}} = 
[\sigma_1,\sigma_2 \sigma_3 \sigma_2^{-1}, \sigma_2 \sigma_4 \sigma_2^{-1},\sigma_2]$$
for $S(C_1,C_2,C_3,C_4) \geq S$ with $S = \langle (2,3) \rangle$, $S = \langle (3,4) \rangle$
resp. $S = \langle (2,3,4) \rangle$.
We have
$$[\underline{\sigma}]^{\eta^2_{23}} = [\underline{\sigma}]^{\eta^2_{34}} = [\underline{\sigma}]^{\eta^3_{234}} = [\underline{\sigma}],
[\underline{\sigma}]^{\eta_{34}} = [\underline{\sigma}]^{\eta_{234} \eta_{23} \eta^{-1}_{234}},$$
$$[\underline{\sigma}]^{\beta_{13}} = [\underline{\sigma}]^{\eta_{23} \beta_{12} \eta^{-1}_{23}},
[\underline{\sigma}]^{\beta_{14}} = [\underline{\sigma}]^{\eta_{34} \beta_{13} \eta^{-1}_{34}},
[\underline{\sigma}]^{\beta_4} = [\underline{\sigma}]^{\eta_{234} \beta_3 \eta^{-1}_{234}}$$
and
$$[\underline{\sigma}]^{\beta_{13} \beta_3 \eta_{23}} =
[\underline{\sigma}]^{\beta_{14} \beta_4 \eta_{34}} =
[\underline{\sigma}]^{\beta_4 \eta_{34} \eta_{234}} = [\underline{\sigma}].$$
\par
\noindent
For an orbit $Z \subseteq \Sigma^i(C_1,C_2,C_3,C_4)$ under the action of $B_4$, let $z_{1j}$, $j=2,3,4$ be the number
of cycles of $\rho_4(\beta_{1j})$ on $Z$ and define
$$g_Z = 1 -  \vert Z \vert  + \frac{1}{2}(3\cdot  \vert Z \vert  - z_{12} - z_{13} - z_{14}).$$
For an orbit $Z^{sy} \subseteq \Sigma^i(C_1,C_2,C_3,C_4)^{sy}$ under the action of $B^{sy}_4$ with
$B^{sy}_4 = \langle B_4, \beta_3, \eta_{23} \rangle$, $B^{sy}_4 = \langle B_4, \beta_4, \eta_{34} \rangle$ or
$B^{sy}_4 = \langle B_4, \beta_3, \eta_{23}, \eta_{234} \rangle$,
let $z_j$, $j=2,3,4$ be the number of cycles of $\rho_4(\beta_{j})$ on $Z^{sy}$ and $z_{23}$, $z_{34}$,
$z_{234}$ be the number of cycles of $\rho_4(\eta_{23})$, $\rho_4(\eta_{34})$, $\rho_4(\eta_{234})$ on $Z^{sy}$
and define
$$g_{Z^{sy}} = 1 -  \vert Z^{sy} \vert  + \frac{1}{2}(3\cdot  \vert Z^{sy} \vert  - z_{12} - z_{3} - z_{23}),$$
$$g_{Z^{sy}} = 1 -  \vert Z^{sy} \vert  + \frac{1}{2}(3\cdot  \vert Z^{sy} \vert  - z_{13} - z_{4} - z_{34}),$$
$$g_{Z^{sy}} = 1 -  \vert Z^{sy} \vert  + \frac{1}{2}(3\cdot  \vert Z^{sy} \vert  - z_{3} - z_{23} - z_{234})$$
for $S(C_1,C_2,C_3,C_4) \geq S$ with $S = \langle (2,3) \rangle$, $S = \langle (3,4) \rangle$
respectively $S = \langle (2,3), (2,3,4) \rangle$.
Note that we have $z_{12}=z_{13}$, $z_{13}=z_{14}$, $z_3=z_4$ and $z_{23} = z_{34}$ for $Z^{sy}$ with the corresponding symmetry.
\par
\noindent
Call $g_Z$ the genus of an orbit $Z \subseteq \Sigma^i(C_1,C_2,C_3,C_4)$ under the action of $B_4$ and $g_{Z^{sy}}$
the genus of an orbit $Z^{sy} \subseteq \Sigma^i(C_1,C_2,C_3,C_4)^{sy}$ under the action of $B^{sy}_4$.
See also \cite[Definition 5.3]{RefMat1991}, \cite[Satz 7.2]{RefMat1991},
\cite[III, 5.2]{RefMM2018} and \cite[III, Theorem 7.8]{RefMM2018}. 
Note that in \cite{RefMat1989} and here, the first class is fixed, 
while the last class is fixed in \cite{RefMat1991} and \cite{RefMM2018}.
\par
\noindent
In inverse Galois theory, rigid braid orbits with $g_Z=0$ or $g_{Z^{sy}} = 0$ in rational class vectors are sufficient to
realize groups $G$ with trivial center $Z(G)$ as Galois groups over $\mathbb{Q}(t)$ and $\mathbb{Q}$. 
An orbit is for example rigid, if it is unique by its size. 
For details, see again \cite[III, 5]{RefMM2018} and \cite[III, 7]{RefMM2018}.
\par
\noindent
Examples show, that the conditions $g_Z=0$ or $g_{Z^{sy}} = 0$ in most cases are satisfied only for
small orbits, i.e. orbits with $ \vert Z \vert  \ll l^i(C_1,\dots, C_m)$ resp. $ \vert Z^{sy} \vert  \ll l^i(C_1,\dots, C_m)$.
Thus information about the splitting of $\Sigma^i(C_1,C_2,C_3,C_4)$ into (small) orbits under the action of $B_4$
is needed. This motivates to look for invariants under this action.

\section{Invariants under the action of braids}\label{SectionInvariantsUnderTheActionOfBraids}

Let $i_{B_m}: \Sigma^i(C_1,\dots, C_m) \rightarrow \mathbb{Z}$ be an invariant under the action of $B_m$, i.e.
$$i_{B_m}([\underline{\sigma}]^{\beta_{ij}}) = i_{B_m}([\underline{\sigma}])$$
for $1 \leq i < j \leq m$ and all $[\underline{\sigma}] \in \Sigma^i(C_1,\dots, C_m)$.
Then we have
$$\Sigma^i(C_1,\dots, C_m) = \bigcup_{k \in \mathbb{Z}} \Sigma^i((C_1,\dots, C_m),i_{B_m},k)$$
where each 
$$\Sigma^i((C_1,\dots, C_m),i_{B_m},k) = 
\{ [\underline{\sigma}] \in \Sigma^i(C_1,\dots, C_m) \mid i_{B_m}([\underline{\sigma}]) = k \}$$
is a union of $B_m$-orbits with the same value of the invariant $i_{B_m}(.)$. A trivial invariant is the length of a braid orbit.
An other invariant can be constructed as follows:
\begin{proposition}
\label{BmActionOnFixedPointsF}
Let $F \leq H_m$ be a group acting on $\Sigma^i(C_1,\dots, C_m)^{sy}$ with
\begin{equation} 
\label{eq:InvariantCondition}
[\underline{\sigma}]^{\beta_{ij} \varphi} = [\underline{\sigma}]^{\varphi \beta_{ij}}
\end{equation}
for $1 \leq i < j \leq m$, all $\varphi \in F$ and all $[\underline{\sigma}] \in \Sigma^i(C_1,\dots, C_m)^{sy}$.
Then $B_m$ acts on the fixed points of $F$ in $\Sigma^i(C_1,\dots, C_m)$.
\end{proposition}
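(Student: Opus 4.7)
The plan is to unpack what it means for $[\underline{\sigma}] \in \Sigma^i(C_1,\dots,C_m)$ to be a fixed point of $F$ and then directly verify that applying any generator $\beta_{ij}$ of $B_m$ produces another such fixed point. I would work with the generators $\beta_{ij}$, $1 \leq i < j \leq m$, since by the definition of $B_m$ they generate the whole pure Hurwitz braid group, and the hypothesis (\ref{eq:InvariantCondition}) is formulated exactly for these generators.

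First I would recall that $B_m$ acts on $\Sigma^i(C_1,\dots,C_m)$ (not only on the larger symmetrized set $\Sigma^i(C_1,\dots,C_m)^{sy}$), which was stated in the excerpt. The reason is that $\pi_m(\beta_{ij}) = \iota$ for every $\beta_{ij} \in B_m$, so by Proposition~\ref{DeducePiFromAction} we have $[\underline{\sigma}]^{\beta_{ij}} \in \Sigma^i(C_1,\dots,C_m)$ whenever $[\underline{\sigma}]$ is. Hence it suffices to show that the subset $\mathrm{Fix}_F \subseteq \Sigma^i(C_1,\dots,C_m)$ of $F$-fixed points is stable under each $\beta_{ij}$.

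So let $[\underline{\sigma}] \in \mathrm{Fix}_F$, meaning $[\underline{\sigma}]^{\varphi} = [\underline{\sigma}]$ for all $\varphi \in F$. For any $\varphi \in F$ and any generator $\beta_{ij}$ of $B_m$, the commutation hypothesis (\ref{eq:InvariantCondition}) gives
$$\bigl([\underline{\sigma}]^{\beta_{ij}}\bigr)^{\varphi} \;=\; [\underline{\sigma}]^{\beta_{ij}\varphi} \;=\; [\underline{\sigma}]^{\varphi \beta_{ij}} \;=\; \bigl([\underline{\sigma}]^{\varphi}\bigr)^{\beta_{ij}} \;=\; [\underline{\sigma}]^{\beta_{ij}},$$
so $[\underline{\sigma}]^{\beta_{ij}}$ is again fixed by every $\varphi \in F$. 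Thus $\mathrm{Fix}_F$ is preserved by each generator, and the restriction of the $B_m$-action to $\mathrm{Fix}_F$ is well-defined.

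The argument is essentially formal, and there is no real obstacle once the setup is in place. The only subtle point to make sure of is the distinction between the symmetrized set $\Sigma^i(C_1,\dots,C_m)^{sy}$ (on which $F$ and $H_m$ act) and the unsymmetrized set $\Sigma^i(C_1,\dots,C_m)$ (on which $B_m$ restricts): one must verify that $\beta_{ij}$ does not move an $F$-fixed point out of $\Sigma^i(C_1,\dots,C_m)$, and that follows immediately from $B_m = \ker \pi_m$ together with Proposition~\ref{DeducePiFromAction}.
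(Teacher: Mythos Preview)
Your proof is correct and follows essentially the same approach as the paper: take a fixed point $[\underline{\sigma}]$ of $F$, use the commutation hypothesis (\ref{eq:InvariantCondition}) to verify $([\underline{\sigma}]^{\beta_{ij}})^{\varphi} = [\underline{\sigma}]^{\beta_{ij}}$, and note that $B_m$ already acts on $\Sigma^i(C_1,\dots,C_m)$. The paper's version is slightly terser but the core computation is identical.
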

\begin{proof}
For a fixed point $[\underline{\sigma}] \in \Sigma^i(C_1,\dots, C_m)$ of $F$, we have
$[\underline{\sigma}]^{\varphi} = [\underline{\sigma}]$ and
$$[\underline{\sigma}]^{\beta_{ij}} = [\underline{\sigma}]^{\varphi \beta_{ij}} = [\underline{\sigma}]^{\beta_{ij} \varphi},$$
thus $[\underline{\sigma}]^{\beta_{ij}}$ is a fixed point of $F$ in $\Sigma^i(C_1,\dots, C_m)$ because
$B_m$ acts on the set $\Sigma^i(C_1,\dots, C_m)$.
\end{proof}
\noindent
Defining $F_{B_m}(F,[\underline{\sigma}])=1$ for fixed points $[\underline{\sigma}]$ under $F$
and $F_{B_m}(F,[\underline{\sigma}])=-1$ otherwise gives an invariant and we get
$$
\Sigma^i((C_1,\dots, C_m),F_{B_m}(F, .), 1) = 
\{ [\underline{\sigma}] \in \Sigma^i(C_1,\dots, C_m) \mid \forall \varphi \in F: [\underline{\sigma}]^{\varphi} = [\underline{\sigma}] \}
$$
as union of $B_m$-orbits. Summarized, the set of fixed points of $F$ is a union of $B_m$ orbits.
\par
\noindent
In the next step, we define a group $F_m \leq H_m$ satisfying (\ref{eq:InvariantCondition}) from Proposition~\ref{BmActionOnFixedPointsF}.
Let $m=2n$, then by \cite[Theorem (ii)]{RefGon2007} and its proof (with adapting the notation 
from \cite{RefGon2007}), the subgroup $\langle x_m, y_m \rangle \leq H_m$ generated by
$$x_m = (\beta_2 \cdots \beta_{2n})(\beta_2 \cdots \beta_{2n-1}) \cdots (\beta_2 \beta_3) \beta_2,$$
and
$$y_m = (\beta_2 \cdots \beta_{n})(\beta_2 \cdots \beta_{n-1}) \cdots (\beta_2 \beta_3) \beta_2 \cdot$$
$$\beta^{-1}_{2n} (\beta^{-1}_{2n-1} \beta^{-1}_{2n}) \cdots (\beta^{-1}_{n+3} \cdots \beta^{-1}_{2n})
(\beta^{-1}_{n+2} \cdots \beta^{-1}_{2n})$$
is isomorphic to a quaternion group $Q_8$ of order $8$.
\par
\noindent
Let $\varphi_{m,1} = y_m$, $\varphi_{m,2} = x_m^{-1} y_m$, $\varphi_{m,3} = \varphi_{m,1} \varphi_{m,2}$,
$F_{m,k} = \langle \varphi_{m,k} \rangle$, $k=1,2,3$
and set $F_m = \langle \varphi_{m,1}, \varphi_{m,2} \rangle \leq H_m$, then we have
\begin{proposition}
\label{PropertiesOfFm}
Let $2 \leq n$ and $m=2n$, then
\begin{itemize}
\item[(a)]
$F_m \cong Q_8$
\item[(b)]
$\varphi_{m,2} = (\beta_2 \cdots \beta_m)^n$
\item[(c.1)]
The action of $\varphi_{m,1}$ on $[\underline{\sigma}]\in \Sigma^i(C_1,\dots, C_m)^{sy}$ is given by
$$[\underline{\sigma}]^{\varphi_{m,1}} =
[\sigma_n,\sigma_{n-1}^{\sigma_n},\sigma_{n-2}^{\sigma_{n-1} \sigma_n},\dots,
\sigma_1^{\sigma_2 \cdots \sigma_n},$$
$$\sigma_{2n}^{\tau},
\sigma_{2n-1}^{\sigma_{2n} \tau},
\sigma_{2n-2}^{\sigma_{2n-1}\sigma_{2n} \tau},\dots,
\sigma_{n+1}^{\sigma_{n+2} \cdots \sigma_{2n} \tau}]$$
with $\tau = \sigma_1 \cdots \sigma_n$ at least for $2 \leq n \leq 5$.
\item[(c.2)]
The action of $\varphi_{m,2}$ on $[\underline{\sigma}]\in \Sigma^i(C_1,\dots, C_m)^{sy}$ is given by
$$[\underline{\sigma}]^{\varphi_{m,2}} = [\sigma_{n+1},\dots,\sigma_{2n},\sigma_1,\dots,\sigma_n]$$
\item[(c.3)]
The action of $\varphi_{m,3}$ on $[\underline{\sigma}]\in \Sigma^i(C_1,\dots, C_m)^{sy}$ is given by
$$[\underline{\sigma}]^{\varphi_{m,3}} =
[\sigma_{2n},\sigma_{2n-1}^{\sigma_{2n}},\sigma_{2n-2}^{\sigma_{2n-1} \sigma_{2n}},\dots,
\sigma_{n+1}^{\sigma_{n+2}\cdots \sigma_{2n}},$$
$$\sigma_n^{\sigma_{n-1}^{-1} \cdots \sigma_1^{-1}},
\sigma_{n-1}^{\sigma_{n-2}^{-1} \cdots \sigma_1^{-1}},\dots,
\sigma_2^{\sigma_1^{-1}}, \sigma_1]$$
at least for $2 \leq n \leq 5$.
\item[(d)]
The natural projection $\pi_m(\varphi_{m,k})$, $k=1,2,3$ is given in table \ref{tab:Data_NaturalProjectionOfPhimk}
at least for $2 \leq n \leq 5$.
\item[(e)]
$[\underline{\sigma}]^{\varphi^2_{m,1}} = [\underline{\sigma}]^{\varphi^2_{m,2}} = [\underline{\sigma}]^{\varphi^2_{m,3}} = [\underline{\sigma}]$
\item[(f)]
$[\underline{\sigma}]^{\varphi_{m,1} \varphi_{m,2}} = [\underline{\sigma}]^{\varphi_{m,2} \varphi_{m,1}}$
\item[(g)]
$F_4 \vartriangleleft H_4$ and 
$F_4$ satisfies (\ref{eq:InvariantCondition}) from Proposition~\ref{BmActionOnFixedPointsF}.
\end{itemize}
\end{proposition}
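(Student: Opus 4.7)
The plan is to establish the seven parts in an order that lets later claims rest on earlier ones, taking advantage of the fact that every action on $\Sigma^i(C_1,\dots,C_m)^{sy}$ factors through $H_m/\langle\Delta_m\rangle$ by Proposition~\ref{CenterHmActsTrivial}. Part (a) is essentially a reference to \cite{RefGon2007}: that result gives $\langle x_m,y_m\rangle\cong Q_8$ inside $H_m$, and since $\varphi_{m,1}=y_m$ and $x_m=\varphi_{m,1}\varphi_{m,2}^{-1}$, the subgroup $F_m=\langle\varphi_{m,1},\varphi_{m,2}\rangle$ coincides with $\langle x_m,y_m\rangle$. Part (b) is a braid-word manipulation: expand $x_m^{-1}y_m$, apply the commutation and braid relations (\ref{eq:BraidRelation1})--(\ref{eq:BraidRelation2}) to telescope the nested half-twist products, and identify the result with $\epsilon_m^n$. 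Part (c.2) is then immediate from Proposition~\ref{EpsilonHmActsCyclic} by iterating the cyclic shift $n$ times.

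Parts (c.1), (c.3), and (d) proceed by direct computation in the action. For (c.1) I would expand $\varphi_{m,1}=y_m$ into its defining product of $\beta_i^{\pm 1}$'s and apply (\ref{eq:BraidAction}) and (\ref{eq:BraidActionInvers}) step by step, collecting the accumulating conjugators and simplifying using the defining relation $\sigma_1\cdots\sigma_m=\iota$ and the $\mathrm{Inn}(G)$-equivalence built into $\Sigma^i$. The qualifier ``at least for $2\le n\le 5$'' reflects that the nested substitutions are checked case by case, though the shape of the formula suggests an inductive pattern in $n$. Part (c.3) follows from $\varphi_{m,3}=\varphi_{m,1}\varphi_{m,2}$ by composing (c.1) with the shift from (c.2). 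For (d), one either applies $\pi_m$ term by term to the defining words (using $\pi_m(\beta_i)=(i-1,i)$) or reads the permutation off the tuples in (c.1)--(c.3) via Proposition~\ref{DeducePiFromAction}; both computations agree and can be tabulated.

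Parts (e) and (f) follow quickly from (a) and (b). Since $F_m\cong Q_8$ has a unique involution and every order-$4$ element squares to it, and since (b) gives $\varphi_{m,2}^2=\epsilon_m^{2n}=\epsilon_m^m=\Delta_m$, this central involution of $F_m$ must be $\Delta_m\in Z(H_m)$; hence $\varphi_{m,1}^2=\varphi_{m,3}^2=\Delta_m$ as well, and Proposition~\ref{CenterHmActsTrivial} gives (e). For (f), the anti-commutation in $Q_8$ reads $\varphi_{m,1}\varphi_{m,2}=\Delta_m\cdot\varphi_{m,2}\varphi_{m,1}$ in $H_m$, so after acting on $[\underline{\sigma}]$ the trivial $\Delta_m$-factor drops out and the two sides agree.

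The main obstacle is (g). Normality $F_4\vartriangleleft H_4$ requires computing $\beta_i\varphi_{4,k}\beta_i^{-1}$ for $i=2,3,4$ and $k=1,2$ and verifying membership in $F_4$; this is a finite but delicate braid-word check. Given normality, condition (\ref{eq:InvariantCondition}) rewrites as $\beta_{ij}^{-1}\varphi^{-1}\beta_{ij}\varphi$ acting trivially on $\Sigma^i(C_1,\dots,C_4)$, equivalently $\beta_{ij}\varphi\beta_{ij}^{-1}\cdot\varphi^{-1}\in\langle\Delta_4\rangle$. Conceptually, this says that conjugation by $B_4$ on $F_4\cong Q_8$ is trivial modulo the center $\langle\Delta_4\rangle=Z(F_4)$, so that the outer action of $H_4$ on $F_4$ factors through $H_4/B_4\cong S_4$. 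I would verify this by computing the six conjugates $\beta_{1j}\varphi_{4,k}\beta_{1j}^{-1}$ for $j=2,3,4$ and $k=1,2$ using Corollary~\ref{ActionOfPureHurwitzGroupByBeta1j} and Proposition~\ref{ActionOfPureHurwitzGroup}, and cross-checking each resulting action against the explicit formulas from (c).
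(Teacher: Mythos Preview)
Your proposal is correct and follows essentially the same route as the paper. Two minor deviations are worth flagging: for (e) the paper pulls the relations $x_m^2=y_m^2=\Delta_m$ and $x_my_mx_m^{-1}=y_m^{-1}$ directly from the proof of \cite[Theorem~(ii)]{RefGon2007} and computes each $\varphi_{m,k}^2$ explicitly (in particular deriving $\varphi_{m,3}=x_m^{-1}$), rather than arguing abstractly via the unique involution in $Q_8$ as you do; and for (g) the paper gets $F_4\vartriangleleft H_4$ by citing \cite[III, Proposition~1.9]{RefMM2018} rather than checking conjugates of generators by hand, leaving only condition~(\ref{eq:InvariantCondition}) to ``direct computation.'' Your argument for (e) is slightly cleaner but tacitly uses that each $\varphi_{m,k}$ has order~$4$ in $F_m$, which follows since any element of order $\le 2$ in $Q_8$ is central and two central elements cannot generate $Q_8$.
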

\begin{table}[!htbp]
\centering
\footnotesize
\captionsetup{font=footnotesize}
\caption{The natural projection $\pi_m(\varphi_{m,k})$ for $k=1,2,3$ and $2 \leq n \leq 5$}
\label{tab:Data_NaturalProjectionOfPhimk}
\begin{tabular}{ll}
\hline\noalign{\smallskip}
$k$ & $\pi_m(\varphi_{m,k})$ \\
\noalign{\smallskip}\hline\noalign{\smallskip}
1 & $(1,n)(2,n-1)\cdots (\frac{n}{2},\frac{n+2}{2})(n+1,2n)(n+2,2n-1)\cdots (\frac{3n}{2},\frac{3n+2}{2})$ \\
  & $n \equiv 0\: mod\: 2$ \\
\noalign{\smallskip}\hline\noalign{\smallskip}
1 & $(1,n)(2,n-1)\cdots (\frac{n-1}{2},\frac{n+3}{2})(n+1,2n)(n+2,2n-1)\cdots (\frac{3n-1}{2},\frac{3n+3}{2})$ \\
  & $n \equiv 1\: mod\: 2$ \\
\noalign{\smallskip}\hline\noalign{\smallskip}
2 & $(1,n+1)(2,n+2) \cdots (n,2n)$ \\
\noalign{\smallskip}\hline\noalign{\smallskip}
3 & $(1,2n)(2,2n-1) \cdots (n,n+1)$ \\
\noalign{\smallskip}\hline
\end{tabular}
\end{table}
\begin{proof}
(a) We have $F_m = \langle y_m, x_m^{-1} y_m \rangle = \langle x_m, y_m \rangle \cong Q_8$ by \cite[Theorem (ii)]{RefGon2007}.
\par
\noindent
(b) With
$u_i = \beta_2 \beta_3 \cdots \beta_i, i=2,3,\dots, 2n$,
$v_i = \beta_i^{-1} \beta_{i+1}^{-1} \cdots \beta_{2n}^{-1}, i=2,3,\dots, 2n$
and
$w_i = \beta_i^{-1} \beta_{i-1}^{-1} \beta_{i-2}^{-1} \cdots \beta_{i-n+1}^{-1}, i=n+2,\dots, 2n$
we can write
$$\varphi_{2n,2} =x_{2n}^{-1} y_{2n} = u_2^{-1} u_3^{-1} \cdots u_{2n-1}^{-1} (u_{2n}^{-1} u_n) 
u_{n-1} \cdots u_2 \cdot
v_{2n} v_{2n-1} \cdots v_{n+2}.$$
Using $u_{2n}^{-1} u_n = w_{2n}$ and the braid relations (\ref{eq:BraidRelation1}), we get
$$\varphi_{2n,2} = u_2^{-1} u_3^{-1} \cdots u_{2n-2}^{-1} (u_{2n-1}^{-1} u_{n-1}) 
u_{n-2} \cdots u_2 \cdot
w_{2n} \cdot v_{2n} v_{2n-1} \cdots v_{n+2}.$$
This can be repeated until we reach
$$\varphi_{2n,2} = u_2^{-1} u_3^{-1} \cdots u_{n+1}^{-1} \cdot
w_{n+2} w_{n+3} \cdots w_{2n} \cdot
v_{2n} v_{2n-1} \cdots v_{n+2}.$$
Again using (\ref{eq:BraidRelation1}), we get
$$w_{2n} \cdot v_{2n} v_{2n-1} \cdots v_{n+2} = v_{2n} v_{2n-1} \cdots v_{n+1},$$
$$w_{2n-1} \cdot v_{2n} v_{2n-1} \cdots v_{n+1} = v_{2n-1} v_{2n-2} \cdots v_{n}, ...$$
and reach
$$\varphi_{2n,2} = u_2^{-1} u_3^{-1} \cdots u_{n+1}^{-1} \cdot v_{n+2} v_{n+1} \cdots v_3.$$
We have
$$u_{n+1}^{-1} \cdot v_{n+2} v_{n+1} \cdots v_3 = v_{n+1} v_n \cdots v_ 3 \cdot v_2,$$
$$u_n^{-1} \cdot v_{n+1} v_n \cdots v_3 = v_{n} v_{n-1} \cdots v_3 \cdot v_2, ...$$
and finally get
$$\varphi_{2n,2} = v_2^n = (\beta_2^{-1} \beta_3^{-1} \cdots \beta_{2n}^{-1})^n.$$
From the third braid relation (\ref{eq:BraidRelation3}) follows
$\beta_2 \beta_3 \cdots \beta_{2n} = \beta_2^{-1} \beta_3^{-1} \cdots \beta_{2n}^{-1}$
and (b) is proved.
\par
\noindent
Parts (c.1) and (c.3) have been verified with the help of a computer for $2 \leq n \leq 5$.
\par
\noindent
(c.2) From (b) we get $\varphi_{m,2} = \epsilon_m^n$. Now (c.2) follows from Proposition~\ref{EpsilonHmActsCyclic}.
\par
\noindent
(d) Part (d) follows from (c.1), (c.2), (c.3) and Proposition~\ref{DeducePiFromAction}.
\par
\noindent
(e) From the proof of \cite[Theorem (ii)]{RefGon2007}, we get the equations
\begin{equation} 
\label{eq:xm2EqualsDeltam}
x_m^2 = \Delta_m
\end{equation}
\begin{equation} 
\label{eq:ym2EqualsDeltam}
y_m^2 = \Delta_m
\end{equation}
\begin{equation} 
\label{eq:xmymxminvEqualsyminv}
x_m y_m x_m^{-1} = y_m^{-1}
\end{equation}
and from (\ref{eq:xmymxminvEqualsyminv}) follows
\begin{equation} 
\label{eq:ymEqualsxminvyminvxm}
 y_m = x_m^{-1} y_m^{-1} x_m
\end{equation}
By definition and (\ref{eq:ym2EqualsDeltam}), we have $\varphi^2_{m,1} = y_m^2 = \Delta_m$.
Using (\ref{eq:xmymxminvEqualsyminv}) and (\ref{eq:xm2EqualsDeltam}) we get
$$\varphi^2_{m,2} = x_m^{-1} y_m x_m^{-1} y_m =  x_m^{-2} (x_m y_m x_m^{-1}) y_m = x_m^{-2} = \Delta_m^{-1} = \Delta_m$$
because $\Delta_m$ is an involution. Again by definition and (\ref{eq:xmymxminvEqualsyminv}), we get
$$\varphi_{m,3} = \varphi_{m,1} \varphi_{m,2} =  y_m x_m^{-1} y_m = x_m^{-1} (x_m y_m x_m^{-1}) y_m =
x_m^{-1} y_m^{-1} y_m = x_m^{-1}$$
and finally $\varphi^2_{m,3} = \Delta_m$. Now (e) follows from Proposition~\ref{CenterHmActsTrivial}.
\par
\noindent
(f) By definition, (\ref{eq:ymEqualsxminvyminvxm}) and (\ref{eq:ym2EqualsDeltam}) we get
$$\varphi_{m,1} \varphi_{m,2} \varphi^{-1}_{m,1} \varphi^{-1}_{m,2} =
(y_m) (x_m^{-1} y_m) (y_m^{-1}) (y_m^{-1} x_m) = 
y_m (x_m^{-1} y_m^{-1} x_m) = y_m^2 = \Delta_m.$$
Now (f) follows from 
$$[\underline{\sigma}] = [\underline{\sigma}]^{\Delta_m} =
[\underline{\sigma}]^{\varphi_{m,1} \varphi_{m,2} \varphi^{-1}_{m,1} \varphi^{-1}_{m,2}}$$
by applying the action of $\varphi_{m,2} \varphi_{m,1}$.
\par
\noindent
(g) We have $F_4 \vartriangleleft H_4$ by \cite[III, Proposition 1.9]{RefMM2018}.
The second statement can be verified by direct computation.
\end{proof}
\noindent
For $m=4$, we get $\varphi_{4,1} = \beta_2 \beta_4^{-1}$ and $\varphi_{4,2} = (\beta_2 \beta_3 \beta_4)^2$.
The action of $\varphi_{4,k}$, $k=1,2,3$ on $\Sigma^i(C_1,C_2,C_3,C_4)^{sy}$ is given by
$$[\underline{\sigma}]^{\varphi_{4,1}} = [\sigma_2,\sigma_1,\sigma^{-1}_1 \sigma_4 \sigma_1,
\sigma^{-1}_1 \sigma^{-1}_4 \sigma_3 \sigma_4 \sigma_1],$$
$$[\underline{\sigma}]^{\varphi_{4,2}} = [\sigma_3,\sigma_4,\sigma_1,\sigma_2],$$
$$[\underline{\sigma}]^{\varphi_{4,3}} = [\sigma_4, \sigma^{-1}_4 \sigma_3 \sigma_4,
\sigma_1 \sigma_2 \sigma^{-1}_1,\sigma_1].$$
\par
\noindent
For $m=6$, we get $\varphi_{6,1} = \beta_2 \beta_3 \beta_2 \beta_6^{-1} \beta_5^{-1} \beta_6^{-1}$
and $\varphi_{6,2} = (\beta_2 \beta_3 \beta_4 \beta_5 \beta_6)^3$.
The action of $\varphi_{6,k}$, $k=1,2,3$ on $\Sigma^i(C_1,C_2,C_3,C_4,C_5,C_6)^{sy}$ is given by
$$[\underline{\sigma}]^{\varphi_{6,1}} = [\sigma_3, \sigma_2, \sigma^{-1}_2 \sigma_1 \sigma_2,
\sigma^{-1}_2 \sigma^{-1}_1 \sigma_6 \sigma_1 \sigma_2,$$
$$\sigma^{-1}_2 \sigma^{-1}_1 \sigma^{-1}_6 \sigma_5 \sigma_6 \sigma_1 \sigma_2, 
\sigma^{-1}_2 \sigma^{-1}_1 \sigma^{-1}_6 \sigma^{-1}_5 \sigma_4 \sigma_5 \sigma_6 \sigma_1 \sigma_2],$$
$$[\underline{\sigma}]^{\varphi_{6,2}} = [\sigma_4,\sigma_5,\sigma_6,\sigma_1,\sigma_2,\sigma_3],$$
$$[\underline{\sigma}]^{\varphi_{6,3}} = [\sigma_6, \sigma^{-1}_6 \sigma_5 \sigma_6,
\sigma^{-1}_6 \sigma^{-1}_5 \sigma_4 \sigma_5 \sigma_6,
\sigma_1 \sigma_2 \sigma_3 \sigma^{-1}_2 \sigma^{-1}_1, \sigma_1 \sigma_2 \sigma^{-1}_1, \sigma_1].$$
\begin{theorem}
\label{BraidInvariants}
$F_{B_4}(F_{4,k},.), k=1,2,3$ are invariants for the action of $B_4$ on $\Sigma^i(C_1,C_2,C_3,C_4)$.
\end{theorem}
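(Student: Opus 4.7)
The plan is to reduce the statement to a direct application of Proposition~\ref{BmActionOnFixedPointsF}, using the structural information about $F_4$ that is already encoded in Proposition~\ref{PropertiesOfFm}(g). Recall that, by construction, $F_{4,k} = \langle \varphi_{4,k} \rangle$ for $k=1,2,3$ is a cyclic subgroup of $F_4 \leq H_4$, and by part (e) of Proposition~\ref{PropertiesOfFm} each $\varphi_{4,k}$ acts as an involution on $\Sigma^i(C_1,C_2,C_3,C_4)^{sy}$, so $F_{4,k}$ has order at most $2$ on the level of its action.

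First I would observe that the invariant $F_{B_4}(F_{4,k},\cdot)$ takes exactly two values, so proving it is $B_4$-invariant is equivalent to proving that the set of fixed points of $F_{4,k}$ inside $\Sigma^i(C_1,C_2,C_3,C_4)$ is a union of $B_4$-orbits. This in turn is precisely the conclusion of Proposition~\ref{BmActionOnFixedPointsF}, applied with $F = F_{4,k}$. Thus all that needs checking is the hypothesis (\ref{eq:InvariantCondition}), namely that
$$[\underline{\sigma}]^{\beta_{ij} \varphi} = [\underline{\sigma}]^{\varphi \beta_{ij}}$$
for every $\varphi \in F_{4,k}$, every generator $\beta_{ij}$ of $B_4$ and every $[\underline{\sigma}] \in \Sigma^i(C_1,C_2,C_3,C_4)^{sy}$.

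Here is where Proposition~\ref{PropertiesOfFm}(g) does the heavy lifting: it asserts that $F_4 \vartriangleleft H_4$ and that $F_4$ already satisfies (\ref{eq:InvariantCondition}). Since $F_{4,k} \leq F_4$, the commutation relation on the level of the action is inherited, so the hypothesis holds for each $F_{4,k}$ with no further work. One should briefly note that the inclusion $F_{4,k} \leq F_4$ is immediate from the definitions $\varphi_{4,1} = y_4$, $\varphi_{4,2} = x_4^{-1} y_4$ and $\varphi_{4,3} = \varphi_{4,1}\varphi_{4,2}$, all of which lie in $\langle x_4,y_4\rangle = F_4$.

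Finally I would apply Proposition~\ref{BmActionOnFixedPointsF} three times, once for each $k\in\{1,2,3\}$, to conclude that the fixed-point sets
$$\{[\underline{\sigma}] \in \Sigma^i(C_1,C_2,C_3,C_4) \mid [\underline{\sigma}]^{\varphi_{4,k}} = [\underline{\sigma}]\}$$
are unions of $B_4$-orbits, which gives the theorem. The only step that is not a formal inheritance is the verification of (\ref{eq:InvariantCondition}) for $F_4$ itself, but that has already been carried out in Proposition~\ref{PropertiesOfFm}(g); given its availability, there is no genuine obstacle, and the main conceptual point is simply that the fixed-point invariant construction of Proposition~\ref{BmActionOnFixedPointsF} descends from $F_4$ to each of its cyclic subgroups $F_{4,k}$.
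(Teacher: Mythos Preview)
Your proposal is correct and takes essentially the same approach as the paper: the paper's proof consists of a single sentence invoking Proposition~\ref{PropertiesOfFm}(g) and Proposition~\ref{BmActionOnFixedPointsF}, and you have simply made explicit the intermediate observation that condition~(\ref{eq:InvariantCondition}) passes from $F_4$ to each subgroup $F_{4,k}$.
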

\begin{proof}
This comes from Proposition~\ref{PropertiesOfFm}(g) and Proposition~\ref{BmActionOnFixedPointsF}.
\end{proof}
\begin{example}
\label{M11_11A11A11A11A}
Table \ref{tab:Data_M11_11A11A11A11A} contains the computation of the invariants from
Theorem~\ref{BraidInvariants} for the class vector $(11A,11A,11A,11A)$ of the Mathieu group $M_{11}$.
$\Sigma^i(11A,11A,11A,11A)$ splits under the action of $B_4$ into braid orbits of length
$2, 2, 2, 33, 198, 198, 198, 864$ and $2,996$.
\begin{table}[!htbp]
\centering
\footnotesize
\captionsetup{font=footnotesize}
\caption{Invariants for the class vector $(11A,11A,11A,11A)$ of $M_{11}$}
\label{tab:Data_M11_11A11A11A11A}
\begin{tabular}{lrrr}
\hline\noalign{\smallskip}
$size$ & $F_{B_4}(F_{4,1},.)$ & $F_{B_4}(F_{4,2},.)$ & $F_{B_4}(F_{4,3},.)$ \\
\noalign{\smallskip}\hline\noalign{\smallskip}
2 & 1 & -1 & -1 \\
2 & -1 & 1 & -1 \\
2 & -1 & -1 & 1 \\
33 & 1 & 1 & 1 \\
198 & 1 & -1 & -1 \\
198 & -1 & 1 & -1 \\
198 & -1 & -1 & 1 \\
864 & -1 & -1 & -1 \\
2,996 & -1 & -1 & -1 \\
\noalign{\smallskip}\hline
\end{tabular}
\end{table}
\end{example}

\section{Fixed points and orbits of length 2}\label{SectionFixedPointsAndOrbitsOfSize2}

From section \ref{SectionInvariantsUnderTheActionOfBraids} we know, that $B_4$ acts on fixed points of $F_{4,k}$, $k=1,2,3$.
Such fixed points can exist only if the class vector $(C_1,C_2,C_3,C_4)$ has suitable symmetries. 
For example, $\varphi_{4,1}$ acts on $\Sigma^i(C_1,C_2,C_3,C_4)^{sy}$ by permuting the classes $C_1,C_2$ and $C_3,C_4$.
Thus fixed points under the action of $\varphi_{4,1}$ can exist only if $C_1 = C_2$ and $C_3 = C_4$.
Now let us study fixed points similar to \cite[III, §2, Satz 5(b)]{RefMat1987}. This leads to the construction
of orbits of length $2$ under the action of $B_4$.
\par
\noindent
Here and in the following sections, let $G$ be a finite group with $Z(G)=I$.
If $(C,C,D,D)$ is a class vector of $G$ and $[\underline{\sigma}]=[\sigma_1,\sigma_2,\sigma_3,\sigma_4] \in \Sigma^i(C,C,D,D)$
is a fixed point under $F_{4,1}$, then we have
$$[\sigma_1,\sigma_2,\sigma_3,\sigma_4]^{\varphi_{4,1}} = 
[\sigma_2,\sigma_1,\sigma^{-1}_1 \sigma_4 \sigma_1, \sigma_2 \sigma_3 \sigma^{-1}_2].$$
Thus there exists an element $\tau \in G$ with
\begin{equation} 
\label{eq:ConjugationTau_1}
\tau^{-1} \sigma_1 \tau = \sigma_2
\end{equation}
\begin{equation}
\label{eq:ConjugationTau_2}
\tau^{-1} \sigma_2 \tau = \sigma_1
\end{equation}
\begin{equation}
\label{eq:ConjugationTau_3}
\tau^{-1} \sigma_3 \tau = \sigma^{-1}_1 \sigma_4 \sigma_1
\end{equation}
\begin{equation}
\label{eq:ConjugationTau_4}
\tau^{-1} \sigma_4 \tau = \sigma_2 \sigma_3 \sigma^{-1}_2
\end{equation}
From (\ref{eq:ConjugationTau_1}) and (\ref{eq:ConjugationTau_2}) follow $\tau^{-2} \sigma_1 \tau^2 = \sigma_1$
and $\tau^{-2} \sigma_2 \tau^2 = \sigma_2$.
Using (\ref{eq:ConjugationTau_3}) and (\ref{eq:ConjugationTau_4}), we get
$$\tau^{-2} \sigma_3 \tau^2 = \tau^{-1} \sigma^{-1}_1 \sigma_4 \sigma_1 \tau = 
(\tau^{-1} \sigma^{-1}_1 \tau)\tau^{-1} \sigma_4 \tau (\tau^{-1} \sigma_1 \tau) =$$
$$\sigma^{-1}_2 \sigma_2 \sigma_3 \sigma^{-1}_2 \sigma_2 = \sigma_3.$$
Due to $G=\langle \sigma_1, \sigma_2, \sigma_3 \rangle$ and $Z(G)=I$, we get $\tau^2 = \iota$,
thus $o(\tau)=1$ or $o(\tau)=2$. In this section, we look at $o(\tau)=1$, while $o(\tau)=2$ will 
be treated in section \ref{SectionMoreBraidOrbits}.
\par
\noindent
With $o(\tau)=1$, (\ref{eq:ConjugationTau_1}), (\ref{eq:ConjugationTau_2}), (\ref{eq:ConjugationTau_3}) and (\ref{eq:ConjugationTau_4})
we get $[\underline{\sigma}]=[\sigma_1,\sigma_1,\sigma_3,\sigma_1 \sigma_3 \sigma^{-1}_1]$
with $G=\langle \sigma_1, \sigma_3 \rangle$ and $o(\sigma_1 \sigma_3)=2$ because $o(\sigma_1 \sigma_3)=1$ leads to $G=I$.
\par
\noindent
In addition, $[\sigma_1,\sigma_1 \sigma_3,\sigma_1 \sigma_3 \sigma^{-1}_1]$ is a generating $3$-system of $G$ with
an involution $\sigma_1 \sigma_3$. Similar considerations can be applied to class vectors $(C,D,C,D)$ and $(C,D,D,C)$.
Having done these preparations, we can state the following
\begin{theorem}
\label{BraidOrbitsOfSize2}
Let $G>I$ be a finite group with $Z(G)=I$ and $\sigma_1, \sigma_2 \in G$ with
$G=\langle \sigma_1, \sigma_2 \rangle$ and $o(\sigma_1 \sigma_2)=2$, then (a), (b) and (c) hold.
\begin{itemize}
\item[(a)]
For $e_1=[\sigma_1,\sigma_1, \sigma_2,\sigma_1 \sigma_2 \sigma^{-1}_1]$ and $e_2=e_1^{\beta_{13}}$,
$$Z_1=\{ e_1, e_2 \}=\{ [\sigma_1,\sigma_1, \sigma_2,\sigma_1 \sigma_2 \sigma^{-1}_1], 
[\sigma_1,\sigma_2 \sigma_1 \sigma^{-1}_2,\sigma_2,\sigma_2] \}$$
is an orbit of length $2$ under the action of $B_4$ with
$\rho_4(\beta_{12})=(1)(2)$,
$\rho_4(\beta_{13})=(1,2)$,
$\rho_4(\beta_{14})=(1,2)$,
$\rho_4(B_4)\cong C_2$
and genus $g_{Z_1}=0$.
\item[(b)]
For $e_3=[\sigma_1,\sigma_2,\sigma_1,\sigma_2]$ and $e_4=e_3^{\beta_{12}}$,
$$Z_2=\{ e_3, e_4 \}=\{ [\sigma_1,\sigma_2,\sigma_1,\sigma_2], 
[\sigma_1,\sigma_2,\sigma^{-1}_2 \sigma_1 \sigma_2,\sigma_1 \sigma_2 \sigma^{-1}_1] \}$$
is an orbit of length $2$ under the action of $B_4$ with
$\rho_4(\beta_{12})=(3,4)$,
$\rho_4(\beta_{13})=(3)(4)$,
$\rho_4(\beta_{14})=(3,4)$,
$\rho_4(B_4)\cong C_2$
and genus $g_{Z_2}=0$.
\item[(c)]
For $e_5=[\sigma_1,\sigma_2,\sigma_2,\sigma^{-1}_2 \sigma_1 \sigma_2]$ and $e_6=e_5^{\beta_{12}}$,
$$Z_3=\{ e_5, e_6 \}=\{ [\sigma_1,\sigma_2,\sigma_2,\sigma^{-1}_2 \sigma_1 \sigma_2], 
[\sigma_1,\sigma_2,\sigma_1 \sigma_2 \sigma^{-1}_1,\sigma_1] \}$$
is an orbit of length $2$ under the action of $B_4$ with
$\rho_4(\beta_{12})=(5,6)$,
$\rho_4(\beta_{13})=(5,6)$,
$\rho_4(\beta_{14})=(5)(6)$,
$\rho_4(B_4)\cong C_2$
and genus $g_{Z_3}=0$.
\end{itemize}
\end{theorem}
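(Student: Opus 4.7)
The proof is by direct computation using the formulas of Proposition~\ref{ActionOfPureHurwitzGroup}, applied to the three $4$-tuples $e_1$, $e_3$, $e_5$ whose shapes are dictated by the fixed-point analysis preceding the theorem: the derivation there shows that $e_1$ arises as the case $o(\tau)=1$ for fixed points of $F_{4,1}$, and the analogous analyses for $F_{4,2}$ and $F_{4,3}$ produce $e_3$ and $e_5$. First I would verify that $e_1$, $e_3$, $e_5$ lie in the appropriate $\Sigma^i(C_1,C_2,C_3,C_4)$: in each case the product relation collapses to $(\sigma_1\sigma_2)^2 = \iota$ after obvious cancellation (for $e_1$, $\sigma_1\cdot\sigma_1\cdot\sigma_2\cdot\sigma_1\sigma_2\sigma_1^{-1} = \sigma_1(\sigma_1\sigma_2)^2\sigma_1^{-1} = \iota$), and generation of $G$ is immediate because $\sigma_1$ and $\sigma_2$ appear among the entries.

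The core of the proof is to compute $e_i^{\beta_{1j}}$ for $i\in\{1,3,5\}$ and $j=2,3,4$ using Proposition~\ref{ActionOfPureHurwitzGroup} and identify each result with $e_i$ or with the other orbit element. I focus on part (a); parts (b) and (c) are entirely analogous. Here the key simplifications come from $\sigma_3\sigma_4 = \sigma_2\cdot\sigma_1\sigma_2\sigma_1^{-1} = \sigma_1^{-2}$ (using $\sigma_2\sigma_1\sigma_2 = \sigma_1^{-1}$) and $\sigma_4\sigma_2 = \sigma_1\sigma_2\sigma_1^{-1}\cdot\sigma_1 = \sigma_1\sigma_2$. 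The first gives $e_1^{\beta_{12}} = e_1$ literally, so $\rho_4(\beta_{12}) = (1)(2)$. Applying $\beta_{13}$ and $\beta_{14}$ yields $4$-tuples that equal $e_2$ only after a suitable simultaneous inner conjugation, giving $\rho_4(\beta_{13}) = \rho_4(\beta_{14}) = (1,2)$. Parts (b) and (c) proceed on the same template, with the fixed $\beta_{1j}$ being $\beta_{13}$ and $\beta_{14}$ respectively, in accord with the symmetry of the underlying class vector.

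With the cycle structures in hand, the relation $\rho_4(\beta_{12}\beta_{13}\beta_{14}) = \iota$ from Proposition~\ref{ActionOfPureHurwitzGroup} together with Corollary~\ref{ActionOfPureHurwitzGroupByBeta1jCase4} shows $\rho_4(B_4) = \langle\rho_4(\beta_{12}),\rho_4(\beta_{13})\rangle$, which is generated by a single transposition (the other generator being trivial), giving $\rho_4(B_4) \cong C_2$. Substituting $\vert Z_i\vert = 2$ and $z_{12}+z_{13}+z_{14}=4$ into the genus formula yields $g_{Z_i} = 1-2+\tfrac{1}{2}(6-4) = 0$. The main obstacle is the bookkeeping required to identify the braid images with the stated $e_2, e_4, e_6$ modulo $Inn(G)$: the raw formulas of Proposition~\ref{ActionOfPureHurwitzGroup} produce $4$-tuples that are not literally equal to the claimed orbit elements, so one has to exhibit the correct conjugating element in each case and systematically invoke the rearrangements $\sigma_1\sigma_2\sigma_1 = \sigma_2^{-1}$, $\sigma_2\sigma_1\sigma_2 = \sigma_1^{-1}$, and $\sigma_1\sigma_2 = \sigma_2^{-1}\sigma_1^{-1}$ of $(\sigma_1\sigma_2)^2 = \iota$.
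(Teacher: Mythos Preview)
Your approach is essentially the same as the paper's: both proofs are direct verification using the action formulas from Proposition~\ref{ActionOfPureHurwitzGroup}, followed by reading off $\rho_4(B_4)$ and the genus. The paper is terser, simply listing the identities $e_1^{\beta_{12}}=e_1$, $e_1^{\beta_{13}}=e_2$, $e_2^{\beta_{12}}=e_2$, $e_2^{\beta_{13}}=e_1$, $e_1^{\beta_{14}}=e_2$ and declaring (b) and (c) analogous, while you spell out more of the mechanism (choosing representatives, the reductions $\sigma_2\sigma_1\sigma_2=\sigma_1^{-1}$, $\sigma_1\sigma_2\sigma_1=\sigma_2^{-1}$).

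One point you should make explicit: you compute the action but do not argue that $e_1\neq e_2$, so strictly speaking you have not excluded $\vert Z_1\vert=1$. The paper handles this in one line: if $e_1=e_1^{\beta_{13}}$, then comparing the second entries forces $\sigma_1$ to be conjugate to $\sigma_2\sigma_1\sigma_2^{-1}$ by an element centralizing $\sigma_1$, hence $\sigma_1$ and $\sigma_2$ commute, whence $G$ is abelian and $G=Z(G)=I$, contradicting $G>I$. You should insert this (or the analogous check for (b) and (c)) before concluding that $\rho_4(\beta_{13})=(1,2)$.
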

\begin{proof}
(a) We have $e_1^{\beta_{13}}=[\sigma_1,\sigma_2 \sigma_1 \sigma^{-1}_2,\sigma_2,\sigma_2]$,
$e_1^{\beta_{12}}=e_1$, $e_1^{\beta_{13}\beta_{12}}=e_1^{\beta_{13}}$, $e_1^{\beta_{13}\beta_{13}}=e_1$
and $e_1^{\beta_{14}}=e_1^{\beta_{13}}$.
The equation $e_1=e_1^{\beta_{13}}$ would imply $G=I$. Thus $Z_1$ has length 2.
The genus $g_{Z_1}$ now computes as 
$$g_{Z_1}=1 - 2 + \frac{1}{2}(3\cdot 2 - 2 -1 -1) = 0.$$
The proof for (b) and (c) is analogous.
\end{proof}
\begin{example}
\label{M11_11A11A2A}
For the class vector $(11A,11A,2A)$ of the Mathieu group $M_{11}$ we have $l^i(11A,11A,2A)=1$.
Using Theorem~\ref{BraidOrbitsOfSize2} we can construct $3$ braid orbits of length $2$ for the
class vector $(11A,11A,11A,11A)$. These are exactly the orbits of length $2$
in Example~\ref{M11_11A11A11A11A}.
\end{example}

\section{Symmetries and orbits of length 6}\label{SectionSymmetriesAndOrbitsOfSize6}

The orbits in Theorem~\ref{BraidOrbitsOfSize2} belong to class vectors $(C,C,D,D)$, $(C,D,C,D)$
or $(C,D,D,C)$. For $C=D$, they belong to the single class vector $(C,C,C,C)$. This allows to
construct braid orbits $Z^{sy}$ of length $6$.
\begin{theorem}
\label{BraidOrbitsOfSize6}
Let $G>I$ be a finite group with $Z(G)=I$ and $\sigma_1, \sigma_2 \in G$ with
$G=\langle \sigma_1, \sigma_2 \rangle$, $o(\sigma_1 \sigma_2)=2$ and $[\sigma_1]=[\sigma_2]$, then
$$Z^{sy} = Z_1 \cup Z_2 \cup Z_3$$
is an orbit of length $6$ under the action of 
$B^{sy}_4 = \langle \beta_{12}, \beta_{13}, \beta_{14}, \beta_3, \eta_{23}, \eta_{234} \rangle$ with
\begin{itemize}
\item[]
$\rho_4(\beta_{12})=(1)(2)(3,4)(5,6)$,
\item[]
$\rho_4(\beta_{13})=(1,2)(3)(4)(5,6)$,
\item[]
$\rho_4(\beta_{14})=(1,2)(3,4)(5)(6)$,
\item[]
$\rho_4(\beta_3)=(1,3,2,4)(5)(6)$,
\item[]
$\rho_4(\eta_{23})=(1,4)(2,3)(5,6)$,
\item[]
$\rho_4(\eta_{234})=(1,6,4)(2,5,3)$,
\item[]
$\rho_4(B_4) \cong C_2 \times C_2 < A_6$,
\item[]
$\rho_4(B^{sy}_4) \cong \rho_4(T) \cong S_4 < S_6$
\end{itemize}
for $T=\langle \beta_3, \eta_{23}, \eta_{234} \rangle$ and with $g_{Z^{sy}} = 0$.
\end{theorem}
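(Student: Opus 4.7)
Since $[\sigma_1]=[\sigma_2]$, every $e_i$ belongs to $\Sigma^i(C,C,C,C)$ for their common class $C$, so $S(C,C,C,C)=S_4$ and the topological automorphisms $\eta_{23}$, $\eta_{234}$ together with the braid $\beta_3$ all act on this ambient set. Consequently the three $B_4$-orbits $Z_1, Z_2, Z_3$ of Theorem~\ref{BraidOrbitsOfSize2} sit inside one set on which $B^{sy}_4$ acts, and the task reduces to showing they fuse into a single orbit of length $6$ and identifying the induced permutation groups.

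First I would restrict $\rho_4|_{B_4}$ to $Z^{sy}$. Because the $Z_k$ are pairwise disjoint, the cycle structure of each $\rho_4(\beta_{1j})$ on $Z^{sy}$ is the disjoint union of its restrictions to $Z_1, Z_2, Z_3$ recorded in Theorem~\ref{BraidOrbitsOfSize2}, which immediately yields the claimed involutions. Since all three images are involutions, Corollary~\ref{PropertiesOfPermutationRepresentationsCase4} gives $\rho_4(B_4)\cong C_2\times C_2$, and containment in $A_6$ follows from Proposition~\ref{PropertiesOfPermutationRepresentations}(a). Next I would apply the explicit formulas of Section~\ref{SectionTheActionOfBraidsInDimension4} for $\beta_3$, $\eta_{23}$, $\eta_{234}$ to each of the six representatives $e_1,\dots,e_6$; every image has to be normalised by simultaneous conjugation and by the relation $o(\sigma_1\sigma_2)=2$ to match one of the $e_j$, and completing these steps reads off the claimed $4$-cycle for $\rho_4(\beta_3)$, the product of three transpositions for $\rho_4(\eta_{23})$ and the pair of $3$-cycles for $\rho_4(\eta_{234})$. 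In particular the $3$-cycles of $\eta_{234}$ connect all three pairs $Z_1, Z_2, Z_3$, so $B^{sy}_4$ acts transitively on $Z^{sy}$ and $|Z^{sy}|=6$.

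To identify the group image, consider the projection $\rho_4(B^{sy}_4)\to \mathrm{Sym}(\{Z_1,Z_2,Z_3\})\cong S_3$ induced by the $S_4$-permutation of the four entries; its kernel is precisely $\rho_4(B_4)\cong C_2\times C_2$, so $|\rho_4(B^{sy}_4)|=24$, and the presence of a $4$-cycle (from $\beta_3$) and a $3$-cycle (from $\eta_{234}$) together with the Klein four normal subgroup forces $\rho_4(B^{sy}_4)\cong S_4$. The same elements already lie in $T=\langle \beta_3,\eta_{23},\eta_{234}\rangle$, and a short check that $\rho_4(T)$ surjects onto this $S_3$-quotient and contains the full Klein four group gives $\rho_4(T)\cong S_4$ as well. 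Finally, from the cycle structures one reads $z_3=3$, $z_{23}=3$, $z_{234}=2$, and the third genus formula of Section~\ref{SectionTheActionOfBraidsInDimension4} yields $g_{Z^{sy}}=1-6+\frac{1}{2}(18-3-3-2)=0$.

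The main obstacle is the pure bookkeeping in the second step: each of the eighteen tuples $e_i^{\beta_3}$, $e_i^{\eta_{23}}$, $e_i^{\eta_{234}}$ must be simplified using $o(\sigma_1\sigma_2)=2$ and then conjugated by a suitable element of $G$ to match one of the $e_j$; only once this table of eighteen identifications is in hand do the claimed permutations and the transitivity fall out by inspection.
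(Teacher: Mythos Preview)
Your proposal is correct and follows essentially the same route as the paper: verify the six representatives are distinct, carry over the $\rho_4(\beta_{1j})$ from Theorem~\ref{BraidOrbitsOfSize2}, compute the action of $\beta_3,\eta_{23},\eta_{234}$ on $e_1,\dots,e_6$ by direct calculation, deduce transitivity, and plug into the genus formula. The only substantive difference is in how the isomorphism $\rho_4(B^{sy}_4)\cong\rho_4(T)\cong S_4$ is established. The paper shows $\rho_4(B_4)\le\rho_4(T)$ by the explicit relations $\rho_4(\beta_3)^2=\rho_4(\beta_{14})$ and $\rho_4(\eta_{234})\rho_4(\beta_3)^2\rho_4(\eta_{234})^{-1}=\rho_4(\beta_{12})$, hence $\rho_4(B^{sy}_4)=\rho_4(T)$, and then invokes GAP for the identification with $S_4$; you instead use the block system $\{Z_1,Z_2,Z_3\}$ to get an extension of $C_2\times C_2$ by $S_3$. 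Your argument is a bit more conceptual and GAP-free, but note that the claim ``its kernel is precisely $\rho_4(B_4)$'' is not automatic and must be read off from the explicit permutations once you have them (the block stabilizer in $S_6$ has order $8$, not $4$, so you need to check no extra element appears). Also, the pairwise disjointness of $Z_1,Z_2,Z_3$ is not part of Theorem~\ref{BraidOrbitsOfSize2} and should be stated as a short verification that $e_i\neq e_j$ for $i\neq j$, which the paper does explicitly.
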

\begin{proof}
With the assumptions and the notation from Theorem~\ref{BraidOrbitsOfSize2} we get $e_i \neq e_j$
for $i \neq j$ and all $i,j \in \{ 1,\dots,6 \}$, thus $Z^{sy}$ has exactly $6$ elements.
The action of $\beta_{12}, \beta_{13}$ and $\beta_{14}$ can be taken from Theorem~\ref{BraidOrbitsOfSize2}.
Table \ref{tab:ActionBeta3Eta23Eta234} contains the action of $\beta_3, \eta_{23}$ and $\eta_{234}$.
\begin{table}[!htbp]
\centering
\footnotesize
\captionsetup{font=footnotesize}
\caption{The action of $\beta_3, \eta_{23}$ and $\eta_{234}$}
\label{tab:ActionBeta3Eta23Eta234}
\begin{tabular}{lllllll}
\hline\noalign{\smallskip}
. & $e_1$ & $e_2$ & $e_3$ & $e_4$ & $e_5$ & $e_6$ \\
\noalign{\smallskip}\hline\noalign{\smallskip}
$\beta_3$    & $e_3$ & $e_4$ & $e_2$ & $e_1$ & $e_5$ & $e_6$ \\
$\eta_{23}$  & $e_4$ & $e_3$ & $e_2$ & $e_1$ & $e_6$ & $e_5$ \\
$\eta_{234}$ & $e_6$ & $e_5$ & $e_2$ & $e_1$ & $e_3$ & $e_4$ \\
\noalign{\smallskip}\hline
\end{tabular}
\end{table}
We get $\rho_4(\beta_3)=(1,3,2,4)(5)(6)$, $\rho_4(\eta_{234})=(1,6,4)(2,5,3)$, $\rho_4(\eta_{234})=(1,6,4)(2,5,3)$
and $\rho_4(T)$ acts transitively on $\{ 1,\dots,6 \}$.
With $T \leq B^{sy}_4$,
$\rho_4(\beta_3)^2 = \rho_4(\beta_{14})$,
$\rho_4(\eta_{234}) \rho_4(\beta_3)^2 \rho_4(\eta_{234})^{-1} = \rho_4(\beta_{12})$ and
$\rho_4(\beta_{12}) \rho_4(\beta_{13}) \rho_4(\beta_{14}) = \iota$ 
we get $\rho_4(B^{sy}_4) \cong \rho_4(T)$.
\par
\noindent
From Corollary~\ref{PropertiesOfPermutationRepresentationsCase4} follows $\rho_4(B_4) \cong C_2 \times C_2$.
Using the computer algebra system GAP \cite{RefGAP}, we get $\rho_4(T) \cong S_4$.
Finally, the genus is $$g_{Z^{sy}} = 1 - 6 + \frac{1}{2}(3\cdot 6 - 3 -3 -2) = 0.$$
\end{proof}
\noindent
With the possible exception of rigidity, all assumptions from \cite[III, Theorem 7.10]{RefMM2018}
for the orbit $Z^{sy}$ from Theorem~\ref{BraidOrbitsOfSize6} are satisfied. If $Z^{sy}$ is rigid,
and $[\sigma_1]$ is a rational conjugacy class, $G$ occurs as Galois group of a geometric Galois
extension of $\mathbb{Q}(v,t)$.
\begin{example}
\label{M11_11A11A11A11ASymmetric}
For the class vector $(11A,11A,2A)$ of the Mathieu group $M_{11}$ we already know $l^i(11A,11A,2A)=1$.
Using Theorem~\ref{BraidOrbitsOfSize6} we can construct an orbit of length $6$ for the
class vector $(11A,11A,11A,11A)$. This is a rigid orbit due to its length and
yields a geometric Galois realization of $M_{11}$ over $\mathbb{Q}(\sqrt{-11})(v,t)$.
See also Example~\ref{M11_11A11A11A11A} and Example~\ref{M11_11A11A2A}.
\end{example}
\begin{example}
\label{SO53_10A10A10A1ASymmetric}
Let $(10A,10A,10A,10A)$ be the class vector of the special orthogonal group $SO_5(3)$ of dimension $5$ 
over the field $\mathbb{F}_3$ with the unique class of elements of order $10$.
Thus $10A$ is a rational conjugacy class of $SO_5(3)$.
Table  \ref{tab:Data_BraidOrbitsOfSO53} contains the splitting into $B_4$- resp. $B^{sy}_4$-orbits 
of the set $\Sigma^i(10A,10A,10A,10A)$.
\begin{table}[!htbp]
\centering
\footnotesize
\captionsetup{font=footnotesize}
\caption{All Braid orbits of $SO_5(3)$ in $\Sigma^i(10A,10A,10A,10A)$}
\label{tab:Data_BraidOrbitsOfSO53}
\begin{tabular}{ll}
\hline\noalign{\smallskip}
$\vert Z \vert$       & $2_{18}$, $20_6$, $30_3$, $120_3$, $410_3$, 510, $772_3$, $1,250_3$, $5,180_3$, 257,600, 271,160 \\
\noalign{\smallskip}\hline\noalign{\smallskip}
$\vert Z^{sy} \vert$  & $6_6$, 90, 120, 360, 510, 1,230, 2,316, 3,750, 15,540, 257,600, 271,160 \\
\noalign{\smallskip}\hline
\end{tabular}
\end{table}
\par
\noindent
For the $4$ classes of involutions $2A$, $2B$, $2C$ and $2D$ of $SO_5(3)$, we have 
$$l^i(10A,10A,2A)=1,l^i(10A,10A,2B)=5,$$
$$l^i(10A,10A,2C)=l^i(10A,10A,2D)=0.$$
Using Theorem~\ref{BraidOrbitsOfSize6} we can construct the $6$ $B^{sy}_4$-orbits of length $6$.
The orbit of length $6$ originating from $(10A,10A,2A)$ is a rigid orbit and
yields a geometric Galois realization of $SO_5(3)$ over $\mathbb{Q}(v,t)$.
\end{example}
\noindent
Interchanging $\sigma_1$ and $\sigma_2$ keeps the prerequisites in Theorem~\ref{BraidOrbitsOfSize2} and
Theorem~\ref{BraidOrbitsOfSize6} due to $o(\sigma_1 \sigma_2)=o(\sigma_2 \sigma_1)$.
With $e_i = e_i(\sigma_1, \sigma_2)$ for $i=1,\dots,6$, let 
$$e_{i+6} = e_i(\sigma_2, \sigma_1), i=1,\dots,6,$$
$$Z_4 = \{ e_7,e_8 \}, Z_5 = \{ e_9,e_{10} \},  Z_6 = \{ e_{11},e_{12} \},$$
$$Z^{sy}_1 = Z_1 \cup Z_2 \cup Z_3, Z^{sy}_2 = Z_4 \cup Z_5 \cup Z_6.$$
Then Theorem~\ref{BraidOrbitsOfSize2} and Theorem~\ref{BraidOrbitsOfSize6}
also hold for $Z_4$, $Z_5$, $Z_6$ and $Z^{sy}_2$.
\begin{theorem}
\label{BraidOrbitsOfSize12}
Let $G>I$ be a finite group with $Z(G)=I$ and $\sigma_1, \sigma_2 \in G$ with
$G=\langle \sigma_1, \sigma_2 \rangle$, $o(\sigma_1 \sigma_2)=2$ and $[\sigma_1]=[\sigma_2]$, then
$$Z^{H_4} = Z^{sy}_1 \cup Z^{sy}_2$$
is an orbit of length $6$ or $12$ under the action of $H_4 = \langle \beta_2, \beta_3, \beta_4 \rangle$ with
\begin{itemize}
\item[]
$\rho_4(\beta_2)=(1)(2)(3,5,4,6)$ resp. $(1)(2)(3,11,4,12)(5,10,6,9)(7)(8)$,
\item[]
$\rho_4(\beta_3)=(1,3,2,4)(5)(6)$ resp. $(1,3,2,4)(5)(6)(7,9,8,10)(11)(12)$,
\item[]
$\rho_4(\beta_4)=(1)(2)(3,6,4,5)$ resp. $(1)(2)(3,6,4,5)(7)(8)(9,12,10,11)$,
\item[]
$\rho_4(H_4) \cong S_4 \cong (C_2 \times C_2):S_3 < S_6$ resp. $(C_2 \times C_2):S_4 < A_{12}$,
\item[]
$\rho_4(B_4) \cong C_2 \times C_2 < A_6$ resp. $C_2 \times C_2 < A_{12}$ and
\item[]
$\rho_4(H_4)/\rho_4(B_4) \cong S_3$ resp. $S_4$.
\end{itemize}
\end{theorem}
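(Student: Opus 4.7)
The plan is to reduce the claim to the already-proven analysis of $B^{sy}_4$ acting on $Z^{sy}_1$ (Theorem~\ref{BraidOrbitsOfSize6}), plus a bounded additional computation for the two remaining Hurwitz generators $\beta_2$ and $\beta_4$, governed by a case split according to whether the two $B^{sy}_4$-orbits $Z^{sy}_1$ and $Z^{sy}_2$ coincide. Since $[\sigma_1]=[\sigma_2]=C$, the underlying class vector is $(C,C,C,C)$, so $H_4$ acts on $\Sigma^i(C,C,C,C)$, which contains both $Z^{sy}_1$ and $Z^{sy}_2$. The generators $\beta_{12},\beta_{13},\beta_{14},\beta_3$ lie in $H_4$ as well as in the enlarged group $B^{sy}_4$ used in Theorem~\ref{BraidOrbitsOfSize6}, so their actions on $Z^{sy}_1$ (and, by the symmetric reasoning applied with $\sigma_1$ and $\sigma_2$ interchanged, on $Z^{sy}_2$) are already known and contribute the ``upper block'' of each claimed permutation.

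Next, I would compute $e_i^{\beta_2}$ and $e_i^{\beta_4}$ for $i=1,\dots,12$ directly from the formulas~(\ref{eq:BraidAction}), simplifying by the identities $\sigma_1\sigma_2\sigma_1=\sigma_2^{-1}$ and $\sigma_2\sigma_1\sigma_2=\sigma_1^{-1}$ which follow from $o(\sigma_1\sigma_2)=2$, and reducing modulo $Inn(G)$. A representative check is $e_1^{\beta_2}=[\sigma_1\sigma_1\sigma_1^{-1},\sigma_1,\sigma_2,\sigma_1\sigma_2\sigma_1^{-1}]=e_1$, and each remaining image is to be matched with one of the canonical representatives $e_j$. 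The resulting permutations together with those of $\beta_3$ will generate a subgroup of $S_{12}$ acting transitively on $Z^{H_4}$.

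The case distinction is then forced by the general fact that $Z^{sy}_1$ and $Z^{sy}_2$, being $B^{sy}_4$-orbits, are either identical or disjoint. In the first case $|Z^{H_4}|=6$ and the computed cycle structures sit in $S_6$, with $\rho_4(H_4)\cong S_4\cong(C_2\times C_2):S_3$; in the second case $|Z^{H_4}|=12$ and the cycles sit in $A_{12}$, with $\rho_4(H_4)\cong (C_2\times C_2):S_4$. In both cases the observation that $\rho_4(\beta_{1j})$ are pairwise commuting involutions, visible from the listed cycle types, gives $\rho_4(B_4)\cong C_2\times C_2$ by Corollary~\ref{PropertiesOfPermutationRepresentationsCase4}, and Proposition~\ref{PropertiesOfPermutationRepresentations}(a) places this group inside $A_6$ resp.\ $A_{12}$. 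The quotient $\rho_4(H_4)/\rho_4(B_4)\cong S_3$ resp.\ $S_4$ is then determined by orders together with Proposition~\ref{PropertiesOfPermutationRepresentations}(b).

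The main obstacle will be the second step: each of the up to $24$ images $e_i^{\beta_2}$, $e_i^{\beta_4}$ must be rewritten as a specific $e_j$ using the braid relation and an explicit inner conjugation, and the resulting cycle structures must reproduce the statement on the nose. A subsidiary difficulty is to confirm in the length-$12$ case that the extension is really $(C_2\times C_2):S_4$ with the indicated action on the Klein four subgroup, rather than some other group of order $96$; this can be checked either by exhibiting $\rho_4(B_4)$ explicitly as a normal complemented Klein four subgroup, or, as for Theorem~\ref{BraidOrbitsOfSize6}, by a short GAP verification.
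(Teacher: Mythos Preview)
Your proposal is correct and mirrors the paper's proof: the paper tabulates $e_i^{\beta_j}$ for $j=2,3,4$ and $i=1,\ldots,12$, performs the same disjoint/coincident case split on $Z^{sy}_1$ and $Z^{sy}_2$, invokes GAP for the group-structure identifications, and uses Corollary~\ref{PropertiesOfPermutationRepresentationsCase4} and Proposition~\ref{PropertiesOfPermutationRepresentations} exactly as you suggest. The only point the paper handles more explicitly is that in the length-$6$ case one must first determine \emph{which} bijection $\{e_7,\ldots,e_{12}\}\to\{e_1,\ldots,e_6\}$ actually holds before reading off $\rho_4(\beta_2)$; the paper pins this down (up to a harmless inversion) by comparing the fixed-point sets of $\rho_4(\beta_3)$ and $\rho_4(\beta_4)$ on the two blocks.
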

\begin{proof}
The action of $\beta_2, \beta_3$ and $\beta_4$ is given in table \ref{tab:ActionBeta2Beta3Beta4}.
\begin{table}[!htbp]
\centering
\footnotesize
\captionsetup{font=footnotesize}
\caption{The action of $\beta_2, \beta_3$ and $\beta_4$}
\label{tab:ActionBeta2Beta3Beta4}
\begin{tabular}{lllllllllllll}
\hline\noalign{\smallskip}
.              & $e_1$ & $e_2$ & $e_3$ & $e_4$ & $e_5$ & $e_6$ & $e_7$ & $e_8$ & $e_9$ & $e_{10}$ & $e_{11}$ & $e_{12}$ \\
\noalign{\smallskip}\hline\noalign{\smallskip}
$\beta_2$ & $e_1$ & $e_2$ & $e_{11}$ & $e_{12}$ & $e_{10}$ & $e_9$ & $e_7$ & $e_8$  & $e_5$  & $e_6$  & $e_4$  & $e_3$ \\
$\beta_3$ & $e_3$ & $e_4$ & $e_2$  & $e_1$  & $e_5$ &  $e_6$ & $e_9$ & $e_{10}$ & $e_8$  & $e_7$  & $e_{11}$ & $e_{12}$ \\
$\beta_4$ & $e_1$ & $e_2$ & $e_6$  & $e_5$  & $e_3$ &  $e_4$ & $e_7$ & $e_8$  & $e_{12}$ & $e_{11}$ & $e_9$  & $e_{10}$ \\
\noalign{\smallskip}\hline
\end{tabular}
\end{table}
By Theorem~\ref{BraidOrbitsOfSize6}, $Z^{sy}_1$ and $Z^{sy}_2$ have length 6. If $Z^{sy}_1 \cap Z^{sy}_2 = \emptyset$,
$Z^{H_4}$ has length 12 and $\rho_4(H_4)$ acts transitive on $\{ 1,\dots,12 \}$.
Now $sign(\rho_4(\beta_i)) = 1, i = 2,3,4$ leads to $\rho_4(H_4) \leq A_{12}$.
Using GAP \cite{RefGAP}, we get $\rho_4(H_4) \cong (C_2 \times C_2):S_4$.
\par
\noindent
Now let $Z^{sy}_1 \cap Z^{sy}_2 \neq \emptyset$ and $e \in Z^{sy}_1 \cap Z^{sy}_2$.
Then we have $Z^{sy}_1 = {\{ e \}}^{B^{sy}_4} = Z^{sy}_2$, thus $Z^{H_4} = Z^{sy}_1 = Z^{sy}_2$
and $Z^{H_4}$ has length 6.
From table \ref{tab:ActionBeta2Beta3Beta4} we get $\rho_4(\beta_3)=(1,3,2,4)(5)(6)$,
$\rho_4(\beta_4)=(1)(2)(3,6,4,5)$ and considering the fixed points of $\rho_4(\beta_3)$ and 
$\rho_4(\beta_4)$ further $\{ e_1,e_2 \}=\{ e_7,e_8 \}$,
$\{ e_3,e_4 \}=\{ e_9,e_{10} \}$ and $\{ e_5,e_6 \}=\{ e_{11},e_{12} \}$.
For $e_5=e_{11}$ we get $\rho_4(\beta_2)=(1)(2)(3,5,4,6)$, for $e_5=e_{12}$ we get
$\rho_4(\beta_2)=(1)(2)(3,6,4,5)= ((1)(2)(3,5,4,6))^{-1}$.
Without restriction, we can assume $e_5=e_{11}$.
Using GAP \cite{RefGAP}, we get $\rho_4(H_4) \cong (C_2 \times C_2):S_3$.
From Corollary~\ref{PropertiesOfPermutationRepresentationsCase4} follows $\rho_4(B_4) \cong C_2 \times C_2$.
\par
\noindent
Finally, from Proposition~\ref{PropertiesOfPermutationRepresentations} and
$\vert \rho_4(H_4)/\rho_4(B_4) \vert = 6$ resp. $24$ follows $\rho_4(H_4)/\rho_4(B_4) \cong S_3$ resp. $S_4$.
\end{proof}
\noindent
Note that we have $S_3 \cong S_2(2)$, $A_4 \cong S_2(3) $ and $S_6 \cong S_4(2)$.
With the assumptions in Theorem~\ref{BraidOrbitsOfSize12}, suppose there exits an element $\tau \in G$
with $\tau^{-1} \sigma_1 \tau = \sigma_2$ and $\tau^{-1} \sigma_2 \tau = \sigma_1$, then we
have $o(\tau)=2$ and $e^{\tau}_i = e_{i+6}$ for $i=1,\dots 6$,
thus $Z^{H_4} = Z^{sy}_1 = Z^{sy}_2$.
\begin{example}
\label{M11_11A11A11A11ASymmetricContinued}
For the class vector $(11A,11A,11A,11A)$ of the Mathieu group $M_{11}$ there exists exactly
one orbit $Z^{H_4}$ with length 6 and $\rho_4(H_4) \cong S_4 < S_6$.
See also Example~\ref{M11_11A11A11A11A}, Example~\ref{M11_11A11A2A} and Example~\ref{M11_11A11A11A11ASymmetric}.
\end{example}
\noindent
Another interesting example is the class vector $(8A,8A,8A,8A)$ of the Mathieu group $M_{23}$.
See section \ref{SectionClassVectorsOfDimension4InM23} later.

\section{More braid orbits in dimension 4}\label{SectionMoreBraidOrbits}

Now we look at $o(\tau)=2$ from section \ref{SectionFixedPointsAndOrbitsOfSize2}. The element $\varphi_{4,1}$ acts on
the set $\Sigma^i(C,C,D,D)$ in the same way as $\eta_2$ from \cite[III, §2, Satz 3]{RefMat1987}.
Thus we can reuse parts of the proof to \cite[III, §2, Satz 5(b)]{RefMat1987}.
From equation (\ref{eq:ConjugationTau_2}) follows $G=\langle \tau, \sigma_2, \sigma_3 \rangle$
and we have $o(\tau \sigma_2 \sigma_3) \leq 2$.
For $o(\tau \sigma_2 \sigma_3)=1$, $[\underline{\sigma}]$ equals $e_2$ from Theorem~\ref{BraidOrbitsOfSize2},
thus without restriction, we can assume $o(\tau \sigma_2 \sigma_3)=2$.
Then $[(\tau \sigma_2 \sigma_3)^{-1}, \tau, \sigma_2, \sigma_3] \in \Sigma^i_4(G)$
is a generating $4$-system of $G$ with two classes of involutions $[\tau \sigma_2 \sigma_3]$ and $[\tau]$.
\par
\noindent
So starting from systems in $\Sigma^i(C,C,D,D)$, we translated them to systems in $\Sigma^i(2A,2B,C,D)$
with some involution classes $2A$ and $2B$.
\par
\noindent
Conversely, using $\varphi_{4,1}$, we can translate systems in $\Sigma^i(2A,2B,C,D)$ into systems
in $\Sigma^i(C,C,D,D)$. If $C$ is a class of involutions, we can repeat the translation step and
get translated systems in $\Sigma^i(D,D,D,D)$.
Similar considerations can be applied to $\varphi_{4,2}$ and $\varphi_{4,3}$.
\begin{theorem}
\label{TranslationOfBraidOrbits}
Let $G$ be a finite group with $Z(G)=I$, $(C_1,C_2,C_3,C_4)$ a class vector of length $4$ of $G$
and $[\underline{\sigma}] \in \Sigma^i_4(C_1,C_2,C_3,C_4)$.
\begin{itemize}
\item[(a)]
If $o(\sigma_1) = o(\sigma_2) = 2$ and 
$G=\langle \sigma^{-1}_2 \sigma_3 \sigma_2, \sigma_3, \sigma_4 \rangle$, 
then we have
\par
$f_{4,1}([\underline{\sigma}])=[\sigma^{-1}_2 \sigma_3 \sigma_2, \sigma_3, \sigma_4, \sigma^{-1}_1 \sigma_4 \sigma_1]
\in \Sigma^i_4(C_3,C_3,C_4,C_4)$
\par
with $f_{4,1}([\underline{\sigma}])^{\varphi_{4,1}} = f_{4,1}([\underline{\sigma}])$.
\item[(b)]
If $o(\sigma_3) = o(\sigma_4) = 2$ and 
$G=\langle \sigma_1, \sigma_4 \sigma_1 \sigma^{-1}_4, \sigma_4 \sigma_2 \sigma^{-1}_4 \rangle$, 
then we have
\par
$g_{4,1}([\underline{\sigma}])=[\sigma_1, \sigma_4 \sigma_1 \sigma^{-1}_4, \sigma_4 \sigma_2 \sigma^{-1}_4, 
\sigma_4 \sigma_3 \sigma_2 \sigma^{-1}_3 \sigma^{-1}_4] \in \Sigma^i_4(C_1,C_1,C_2,C_2)$
\par
with $g_{4,1}([\underline{\sigma}])^{\varphi_{4,1}} = g_{4,1}([\underline{\sigma}])$.
\item[(c)]
If $o(\sigma_2) = o(\sigma_4) = 2$ and 
$G=\langle \sigma_1, \sigma^{-1}_2 \sigma_3 \sigma_2, \sigma^{-1}_2 \sigma_1 \sigma_2 \rangle$, 
then we have
\par
$f_{4,2}([\underline{\sigma}])=[\sigma_1, \sigma^{-1}_2 \sigma_3 \sigma_2, \sigma^{-1}_2 \sigma_1 \sigma_2, \sigma_3]
\in \Sigma^i_4(C_1,C_3,C_1,C_3)$
\par
with $f_{4,2}([\underline{\sigma}])^{\varphi_{4,2}} = f_{4,2}([\underline{\sigma}])$.
\item[(d)]
If $o(\sigma_2) = o(\sigma_3) = 2$ and 
$G=\langle  \sigma_4, \sigma_1, \sigma^{-1}_2 \sigma_1 \sigma_2 \rangle$, 
then we have
\par
$f_{4,3}([\underline{\sigma}])=[\sigma_4, \sigma_1, \sigma^{-1}_2 \sigma_1 \sigma_2, \sigma^{-1}_3 \sigma_4 \sigma_3]
\in \Sigma^i_4(C_4,C_1,C_1,C_4)$
\par
with $f_{4,3}([\underline{\sigma}])^{\varphi_{4,3}} = f_{4,3}([\underline{\sigma}])$.
\end{itemize}
\end{theorem}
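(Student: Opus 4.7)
The plan is to verify the four parts (a)--(d) by a single uniform scheme. For each part I would first check that the candidate $4$-tuple $\underline{\tau}$ really lies in the stated set $\Sigma^i_4(\dots)$, and then exhibit an explicit conjugating element $\delta \in G$ that realises the claimed fixed-point equality modulo inner automorphism.

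Take part (a) as the model. With $\underline{\tau} = (\tau_1,\tau_2,\tau_3,\tau_4) = (\sigma_2^{-1}\sigma_3\sigma_2, \sigma_3, \sigma_4, \sigma_1^{-1}\sigma_4\sigma_1)$, class membership in $(C_3,C_3,C_4,C_4)$ is immediate, generation is exactly the hypothesis $G = \langle \sigma_2^{-1}\sigma_3\sigma_2, \sigma_3, \sigma_4 \rangle$, and the product relation $\tau_1\tau_2\tau_3\tau_4 = \iota$ follows by substituting $\sigma_4 = \sigma_3^{-1}\sigma_2^{-1}\sigma_1^{-1}$ (from $\sigma_1\sigma_2\sigma_3\sigma_4 = \iota$) and using $\sigma_1^2 = \sigma_2^2 = \iota$. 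Then applying the formula
$$[\underline{\tau}]^{\varphi_{4,1}} = [\tau_2,\, \tau_1,\, \tau_1^{-1}\tau_4\tau_1,\, \tau_1^{-1}\tau_4^{-1}\tau_3\tau_4\tau_1]$$
and rewriting the last entry as $\tau_2\tau_3\tau_2^{-1}$ via $\tau_1\tau_2\tau_3\tau_4 = \iota$, I would propose $\delta = \sigma_2$ as the conjugator and check the four entrywise identities $\sigma_2^{-1}\tau_i\sigma_2 = (\underline{\tau}^{\varphi_{4,1}})_i$. The first two reduce at once to $\sigma_2^2 = \iota$, and the remaining two become short computations once one substitutes $\sigma_1^{-1}\sigma_4\sigma_1 = \sigma_1\sigma_3^{-1}\sigma_2^{-1}$ (via $\sigma_1^2 = \iota$ and the product relation).

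Parts (b), (c), (d) follow by exactly the same pattern with a different designated involution playing the role of $\delta$. In (c) the situation is simplest: $\varphi_{4,2}$ acts as the cyclic shift $[\sigma_1,\sigma_2,\sigma_3,\sigma_4] \mapsto [\sigma_3,\sigma_4,\sigma_1,\sigma_2]$, so the fixed-point property of $f_{4,2}([\underline{\sigma}]) = [\sigma_1, \sigma_2^{-1}\sigma_3\sigma_2, \sigma_2^{-1}\sigma_1\sigma_2, \sigma_3]$ reduces to checking that conjugation by $\delta = \sigma_2$ swaps the pairs $(\tau_1,\tau_3)$ and $(\tau_2,\tau_4)$, which is immediate from $\sigma_2^2 = \iota$; the product relation collapses to $(\sigma_1\sigma_2\sigma_3)^2 = \sigma_4^2 = \iota$. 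For (b) I would take $\delta = \sigma_4$ and for (d) $\delta = \sigma_3$; in each case the choice is dictated by the pair of involutions in the hypothesis, and the four entrywise conjugation equations reduce to repeated use of $\sigma_1\sigma_2\sigma_3\sigma_4 = \iota$ together with the two given square-one relations.

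The main obstacle is purely algebraic bookkeeping: in parts (a), (b) and (d) the images $[\underline{\tau}]^{\varphi_{4,k}}$ are formally long words in four letters, and the rewriting step that recasts the long last entry via $\tau_1\tau_2\tau_3\tau_4 = \iota$ (for instance $\tau_1^{-1}\tau_4^{-1}\tau_3\tau_4\tau_1 = \tau_2\tau_3\tau_2^{-1}$ in (a), and the analogous simplification coming from $\varphi_{4,3}$ in (d)) is the step that makes the verification tractable. Once this rewriting is performed, the choice of $\delta$ is essentially forced by the first two entrywise equations, and the remaining two verifications become routine.
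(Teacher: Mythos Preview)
Your proposal is correct and follows essentially the same approach as the paper. The paper's proof verifies the product relation $\tau_1\tau_2\tau_3\tau_4=\iota$ for part (a) by an explicit word manipulation (inserting $\sigma_1\sigma_1=\iota$ and using $\sigma_2\sigma_3\sigma_4\sigma_1=\iota$) and then dismisses the fixed-point identity with ``by direct computation'', declaring (b)--(d) analogous; you have spelled out that direct computation by naming the conjugating element ($\delta=\sigma_2,\sigma_4,\sigma_2,\sigma_3$ respectively) and indicating how the entrywise checks reduce to the product relation plus the two involution hypotheses, which is exactly what the omitted computation amounts to.
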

\begin{proof}
(a) We have
$$\sigma^{-1}_2 \sigma_3 \sigma_2 \sigma_3 \sigma_4 \sigma^{-1}_1 \sigma_4 \sigma_1 =
\sigma^{-1}_2 \sigma_3 (\sigma_1 \sigma_1) \sigma_2 \sigma_3 \sigma_4 \sigma^{-1}_1 \sigma_4 \sigma_1 =
\sigma_2 \sigma_3 \sigma_4 \sigma_1 = \iota,$$
thus $f_{4,1}([\underline{\sigma}]) \in \Sigma^i_4(C_3,C_3,C_4,C_4)$.
By direct computation we get 
$$f_{4,1}([\underline{\sigma}])^{\varphi_{4,1}} = f_{4,1}([\underline{\sigma}]).$$
The proof for (b) to (d) is analogous.
\end{proof}
\noindent
We omit the cases $o(\sigma_1) = o(\sigma_3) = 2$ and $o(\sigma_1) = o(\sigma_4) = 2$ as they are not needed here.
Again let $G$ be a finite group with $Z(G)=I$, $(2A,2B,C,D)$ a class vector of $G$ with involution classes $2A$ and $2B$
and $Z \subseteq \Sigma^i_4(2A,2B,C,D)$ be a $B_4$-orbit. Define
$$f_{4,1}(Z) = \{ f_{4,1}([\underline{\sigma}]) \mid [\underline{\sigma}] \in Z, 
G=\langle \sigma^{-1}_2 \sigma_3 \sigma_2, \sigma_3, \sigma_4 \rangle \} \subseteq \Sigma^i_4(C,C,D,D).$$
By Theorem~\ref{TranslationOfBraidOrbits}(a), $f_{4,1}(Z)$ is a set of fixed points of $\langle \varphi_{4,1} \rangle$.
The set 
$$Z^{\beta_2} = \{ [\underline{\sigma}]^{\beta_2} \mid [\underline{\sigma}] \in Z \} \subseteq \Sigma^i_4(2B,2A,C,D)$$
is a $B_4$-orbit and we get that
$f_{4,1}(Z^{\beta_2}) \subseteq \Sigma^i_4(C,C,D,D)$ again is a set of fixed points of $\langle \varphi_{4,1} \rangle$.
Examples, especially the $M_{24}$-example in section \ref{SectionClassVectorsOfDimension4InM24} show, 
that $f_{4,1}(Z) \cup  f_{4,1}(Z^{\beta_2})$ is often already a $B_4$-orbit in $\Sigma^i_4(C,C,D,D)$.
The sets $f_{4,2}(Z)$ and $f_{4,3}(Z)$ can be defined and treated in the same way.
\par
\noindent
In \cite[III, 5.4]{RefMM2018} the same results are achieved for the translation of braid orbits.
The most interesting application of Theorem~\ref{TranslationOfBraidOrbits}
is the class vector $(12B,2A,2A,2A)$ of the Mathieu group $M_{24}$.
This example will be introduced in section \ref{SectionClassVectorsOfDimension4InM24}.

\section{Fixed points and orbits of length 4}\label{SectionFixedPointsAndOrbitsOfSize4}

In \cite[Theorem 3]{RefGon2008} a complete list of the maximal finite subgroups of $H_m$ is given.
For $H_4$, the maximal finite subgroups up to conjugation are the binary tetrahedral group $T_1$
and the dicyclic group $Dic_{16}$ of order $16$.
Their algebraic structure is given by 
$T_1 = \langle x_4, y_4 \rangle \rtimes \langle \alpha_{4,1}^2 \rangle \cong Q_8 \rtimes C_3$ 
with $x_4$, $y_4$ from section \ref{SectionInvariantsUnderTheActionOfBraids}, $\alpha_{4,1} = \beta_2 \beta_3 \beta_4^2$,
and $Dic_{16} = \langle x_4, \alpha_{4,0} \rangle$ with $\alpha_{4,0} = \beta_2 \beta_3 \beta_4$,
see \cite[Remark 13]{RefGon2008}. 
The element $\alpha_{4,0}$ has order $8$, while $\alpha_{4,1}$ has order $6$.
\par
\noindent
Looking at the structure of the fixed points of the action of $\langle \alpha_{4,1}^2 \rangle$ on the set $\Sigma^i(C_1,C_2,C_3,C_4)^{sy}$
in the same way as we did it for $Q_8 = \langle x_4, y_4 \rangle = F_4$ in section \ref{SectionFixedPointsAndOrbitsOfSize2}
yields the next theorem.
\begin{theorem}
\label{BraidOrbitsOfSize4}
Let $G>I$ be a finite group with $Z(G)=I$ and $\sigma_1, \sigma_2 \in G$ with
$G=\langle \sigma_1, \sigma_2 \rangle$,
$\sigma_1 \sigma_2 \sigma_1 = \sigma_2 \sigma_1 \sigma_2$,
$o(\sigma_1 \sigma_2 \sigma_1 )=2$,
$C=[\sigma_1]$ and $D=[\sigma_1 \sigma_2 \sigma_1]$.
Then $Z_4 = \{ h_1\}^{B_4} = \{ h_1,h_2,h_3,h_4 \} \subseteq \Sigma^i(C,C,C,D)$ with
\begin{itemize}
\item[]
$h_1 = [\sigma_1, \sigma_2, \sigma_1, \sigma_1 \sigma_2 \sigma_1],$
\item[]
$h_2 = [\sigma_2 \sigma_1 \sigma_2^{-1}, \sigma_2, \sigma_1, \sigma_2 \sigma_1^2],$
\item[]
$h_3 = [\sigma_2, \sigma_1, \sigma_1, \sigma_2 \sigma_1^2],$
\item[]
$h_4 = [\sigma_2, \sigma_2, \sigma_1, \sigma_2^2 \sigma_1]$
\end{itemize}
is an orbit of length $4$ under the action of $B_4$ with
$\rho_4(\beta_{12})=(1,2,3)(4)$,
$\rho_4(\beta_{13})=(1,3,4)(2)$,
$\rho_4(\beta_{14})=(1,4,2)(3)$,
$\rho_4(B_4)\cong A_4$
and with genus $g_{Z_4}=0$.
\end{theorem}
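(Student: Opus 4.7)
The plan is to follow the strategy of Theorem~\ref{BraidOrbitsOfSize2}, exploiting now the braid-type relation $\sigma_1\sigma_2\sigma_1 = \sigma_2\sigma_1\sigma_2$ together with $o(\sigma_1\sigma_2\sigma_1) = 2$ to close the $B_4$-orbit of $h_1$ after four elements. Write $\delta = \sigma_1\sigma_2\sigma_1 = \sigma_2\sigma_1\sigma_2$. Then $\delta^2 = (\sigma_1\sigma_2\sigma_1)(\sigma_2\sigma_1\sigma_2) = (\sigma_1\sigma_2)^3$, so $(\sigma_1\sigma_2)^3 = \iota$; if $\sigma_1\sigma_2 = \iota$ then $G = \langle\sigma_1\rangle$ is cyclic, contradicting $I<G$ with $Z(G) = I$. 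The braid relation also implies $(\sigma_2\sigma_1)\sigma_2(\sigma_2\sigma_1)^{-1} = \sigma_1$, so $[\sigma_1] = [\sigma_2] = C$; together with the identities $\sigma_2^{-1}\sigma_1\sigma_2 = \sigma_1\sigma_2\sigma_1^{-1}$ and $\sigma_1^{-1}\sigma_2\sigma_1 = \sigma_2\sigma_1\sigma_2^{-1}$ derived from the braid relation, this establishes $h_1 \in \Sigma^i(C,C,C,D)$: the product of entries is $\delta\cdot\delta = \iota$, the classes are correct, and generation is immediate.

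The heart of the proof is a direct computation of $h_i^{\beta_{1j}}$ for $i \in \{1,2,3,4\}$ and $j \in \{2,3,4\}$ via Proposition~\ref{ActionOfPureHurwitzGroup}, simplifying each raw image using the braid relation and $\delta^2 = \iota$, and then matching modulo inner conjugation with one of the four listed tuples. For example, in $h_1^{\beta_{12}}$ the conjugator is $\sigma_3\sigma_4 = (\sigma_1\sigma_2)^{-1}$, which sends $\sigma_1$ to $\sigma_2$ and $\sigma_2$ to $\sigma_1\sigma_2\sigma_1^{-1}$; a subsequent inner conjugation by $\sigma_1$ matches the result with $h_2$. The table of twelve such computations realizes the cycle structures $\rho_4(\beta_{12}) = (1,2,3)(4)$, $\rho_4(\beta_{13}) = (1,3,4)(2)$, $\rho_4(\beta_{14}) = (1,4,2)(3)$. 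Distinctness of $h_1,\dots,h_4$ follows because each pair of tuples already agrees in two coordinates (the third coordinate is $\sigma_1$ throughout, with one further match in each case); an equivalence $h_i \sim h_k$ would then force a conjugator $\tau \in C_G(\sigma_1) \cap C_G(\sigma_2) = Z(G) = I$, and then $\tau = \iota$ collapses a residual coordinate equation to $\sigma_1 = \sigma_2$ or to $\sigma_1\sigma_2 = \sigma_2\sigma_1$, contradicting $I<G$ with $Z(G)=I$ in either case.

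The isomorphism $\rho_4(B_4) \cong A_4$ then follows from Proposition~\ref{PropertiesOfPermutationRepresentations}(a) together with $(1,2,3)(1,3,4) = (2,3,4)$, which exhibits a second 3-cycle with overlapping support. Finally the genus formula yields $g_{Z_4} = 1 - 4 + \frac{1}{2}(12 - 2 - 2 - 2) = 0$, each $\rho_4(\beta_{1j})$ being a 3-cycle plus a fixed point and hence contributing $z_{1j} = 2$ cycles. The main obstacle I anticipate is the bookkeeping in the central step: each raw braid image differs from the intended $h_k$ by a specific inner automorphism that must be tracked explicitly both to close the orbit and to rule out spurious duplications. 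I would handle this by normalizing each image so that its third coordinate equals $\sigma_1$, which pins down the inner conjugation uniquely and reduces the final identification to matching the remaining three coordinates against the prescribed $h_k$.
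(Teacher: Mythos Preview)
Your approach is essentially the same as the paper's: both verify that $h_1,\dots,h_4$ lie in $\Sigma^i(C,C,C,D)$, compute the $\beta_{1j}$-action directly (the paper records this in a table, you describe the mechanism via Proposition~\ref{ActionOfPureHurwitzGroup}), check pairwise distinctness, and read off $\rho_4(B_4)\cong A_4$ and $g_{Z_4}=0$. Your write-up is in fact more detailed than the paper's, which simply states that distinctness and the action table are verified ``by direct computation''.

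One small inaccuracy to fix: your distinctness argument claims that every pair $h_i,h_k$ shares two literal coordinates, forcing any conjugator into $C_G(\sigma_1)\cap C_G(\sigma_2)=Z(G)$. This fails for the pair $h_1,h_3$, where only the third coordinate agrees. That case is still immediate, however: from the third coordinate the conjugator $\tau$ centralises $\sigma_1$, and then the first coordinate gives $\sigma_1=\tau^{-1}\sigma_1\tau=\sigma_2$, forcing $G$ cyclic and hence $G=Z(G)=I$. Also, $(1,2,3)(1,3,4)=(1,2,4)$ rather than $(2,3,4)$, though this does not affect the conclusion that the two $3$-cycles generate $A_4$.
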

\begin{proof}
From $\sigma_1 = \sigma_1 \sigma_2 \sigma_1 \sigma_1^{-1} \sigma_2^{-1} = 
\sigma_2 \sigma_1 \sigma_2 \sigma_1^{-1} \sigma_2^{-1}$
we get $C = [\sigma_1] = [\sigma_2]$.
\par
\noindent
With $\sigma_1(\sigma_2 \sigma_1^2)\sigma_1^{-1} = \sigma_1 \sigma_2 \sigma_1$
and $\sigma_2^{-1}(\sigma_2^2 \sigma_1)\sigma_2 = \sigma_2 \sigma_1 \sigma_2 = \sigma_1 \sigma_2 \sigma_1$
we get
$D = [\sigma_1 \sigma_2 \sigma_1] = [\sigma_2 \sigma_1^2] = [\sigma_2^2 \sigma_1]$
and $h_i \in \Sigma^i(C,C,C,D)$ for $i=1,2,3,4$ follows.
By direct computation, $h_i \neq h_j$ for $i \neq j, i,j = 1,2,3,4$ can be verified, thus $\vert Z_4 \vert = 4$.
Table \ref{tab:ActionB4OnZ4} contains the action of $B_4$ on $Z_4$.
\begin{table}[!htbp]
\centering
\footnotesize
\captionsetup{font=footnotesize}
\caption{The action of $B_4$ on $Z_4$}
\label{tab:ActionB4OnZ4}
\begin{tabular}{lllll}
\hline\noalign{\smallskip}
. & $h_1$ & $h_2$ & $h_3$ & $h_4$ \\
\noalign{\smallskip}\hline\noalign{\smallskip}
$\beta_{12}$ & $h_2$ & $h_3$ & $h_1$ & $h_4$ \\
$\beta_{13}$ & $h_3$ & $h_2$ & $h_4$ & $h_1$ \\
$\beta_{14}$ & $h_4$ & $h_1$ & $h_3$ & $h_2$ \\
\noalign{\smallskip}\hline
\end{tabular}
\end{table}
We get $\rho_4(\beta_{12})=(1,2,3)(4)$, $\rho_4(\beta_{13})=(1,3,4)(2)$, $\rho_4(\beta_{14})=(1,4,2)(3)$,
$\rho_4(B_4)\cong A_4$ and
$$g_{Z_4}= 1 - 4 + \frac{1}{2}(3\cdot 4 -2 -2 -2) = 0.$$
\end{proof}
\noindent
Note that we have $h_1^{\alpha_{4,1}^2} = h_1$ and $h_2^{\alpha_{4,1}^2} \neq h_2$, thus the $B_4$-orbit $Z_4$ is not
a set of fixed points under $\langle \alpha_{4,1}^2 \rangle$.
With the possible exception of rigidity, all assumptions from \cite[III, Corollary 5.8]{RefMM2018}
for the $B_4$-orbit $Z_4$ are satisfied. If $Z_4$ is rigid, and $C$ is a rational conjugacy class, 
$G$ occurs as Galois group of a geometric Galois extension of $\mathbb{Q}(u_1,u_2,u_3,u_4,t)$.
\par
\noindent
If $G$ is a group satisfying the assumptions of Theorem~\ref{BraidOrbitsOfSize4}, then
$G$ is a factor group of the {\it Artin braid group} $Ar_3$ with $2$ strands.
Moreover,
$$[\tau_1, \tau_2, \tau_3] = [\sigma_1 \sigma_2 \sigma_1, \sigma_1 \sigma_2, \sigma_1]$$
is a generating $3$-system of $G$ with $o(\tau_1)=2$ and $o(\tau_2)=3$.
Thus $G$ has a $(2,3)$-generating system and hence is a factor group of the modular
group $PSL_2(\mathbb{Z}) \cong C_2 \ast C_3$.
Note that we have $Ar_3/Z(Ar_3) \cong PSL_2(\mathbb{Z})$, see \cite[Theorem A.2]{RefKT2008}.
\begin{corollary}
\label{Systems23ToBraidOrbitsOfSize4}
Let $G>I$ be a finite group with $Z(G)=I$ and $(2A,3A,C)$ a class vector of $G$ with $l^i(2A,3A,C)>0$,
$2A$ a class of involutions and $3A$ a class of elements with order $3$.
Then $\Sigma^i(C,C,C,2A)$ contains $B_4$-orbits $Z_4$ with length $4$.
In addition, we have $l^i(C,C,3A^{-1})>0$.
\end{corollary}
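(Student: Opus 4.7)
The plan is to apply Theorem~\ref{BraidOrbitsOfSize4} by producing, from any generating system in $\Sigma^i(2A,3A,C)$, a pair $(\sigma_1,\sigma_2)$ satisfying its hypotheses. Since $l^i(2A,3A,C) > 0$, I would pick a representative $(\tau_1,\tau_2,\tau_3)$ with $\tau_1 \in 2A$, $\tau_2 \in 3A$, $\tau_3 \in C$, $\tau_1\tau_2\tau_3 = \iota$ and $\langle \tau_1,\tau_2,\tau_3 \rangle = G$. Motivated by the discussion preceding the corollary, where any group satisfying the hypotheses of Theorem~\ref{BraidOrbitsOfSize4} is realised as a factor of $PSL_2(\mathbb{Z}) \cong C_2 \ast C_3$ via $\tau_1 = \sigma_1\sigma_2\sigma_1$, $\tau_2 = \sigma_1\sigma_2$, $\tau_3 = \sigma_1$, I would invert this assignment by defining $\sigma_1 := \tau_3$ and $\sigma_2 := \tau_3^{-1}\tau_2$.

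With this choice, $\sigma_1 \in C$. A short computation gives $\sigma_1\sigma_2\sigma_1 = \tau_2\tau_3 = \tau_1^{-1} = \tau_1$, using $\tau_1\tau_2\tau_3 = \iota$ and $o(\tau_1) = 2$, and $\sigma_2\sigma_1\sigma_2 = \tau_3^{-1}\tau_2^2 = \tau_3^{-1}\tau_2^{-1} = (\tau_2\tau_3)^{-1} = \tau_1$, using $o(\tau_2) = 3$. Hence the Artin braid relation $\sigma_1\sigma_2\sigma_1 = \sigma_2\sigma_1\sigma_2$ holds, and $o(\sigma_1\sigma_2\sigma_1) = o(\tau_1) = 2$. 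Since $\langle \sigma_1,\sigma_2 \rangle = \langle \tau_3, \tau_3^{-1}\tau_2 \rangle = \langle \tau_2,\tau_3 \rangle$ and $\tau_1 = (\tau_2\tau_3)^{-1}$ already lies in this subgroup, $\langle \sigma_1,\sigma_2\rangle = G$ follows. Theorem~\ref{BraidOrbitsOfSize4} then produces a $B_4$-orbit $Z_4$ of length $4$ inside $\Sigma^i(C,C,C,D)$ with $D = [\sigma_1\sigma_2\sigma_1] = [\tau_1] = 2A$, proving the first claim.

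For the \emph{in addition} part, since $\sigma_1\sigma_2 = \tau_2 \in 3A$, its inverse lies in $3A^{-1}$, so the triple $(\sigma_1,\sigma_2,(\sigma_1\sigma_2)^{-1})$ lies in $\Sigma(C,C,3A^{-1})$. It generates $G$ because $\langle\sigma_1,\sigma_2\rangle = G$, which gives $l^i(C,C,3A^{-1}) > 0$.

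I do not expect any serious obstacle: the argument is essentially the dictionary between $(2,3,C)$-generating triples and Artin-braid pairs built into the modular-group presentation, already sketched by the author in the paragraph just before the corollary. The only point needing care is verifying that the product relation $\tau_1\tau_2\tau_3 = \iota$, combined with $o(\tau_1) = 2$ and $o(\tau_2) = 3$, forces \emph{both} halves $\sigma_1\sigma_2\sigma_1 = \tau_1$ and $\sigma_2\sigma_1\sigma_2 = \tau_1$ simultaneously, which is what makes the translation work.
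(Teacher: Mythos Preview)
Your proof is correct and essentially identical to the paper's: your choice $\sigma_1 = \tau_3$, $\sigma_2 = \tau_3^{-1}\tau_2$ coincides with the paper's $\sigma_1 = \tau_3 = \tau_2^2\tau_1$, $\sigma_2 = \tau_1\tau_2^2$ (using $\tau_3^{-1} = \tau_1\tau_2$), and the verifications of the braid relation, the involution condition, generation, and the $(C,C,3A^{-1})$ triple proceed in the same way. The only point left implicit is that $\sigma_2 \in C$, but this is contained in the conclusion $Z_4 \subseteq \Sigma^i(C,C,C,2A)$ of Theorem~\ref{BraidOrbitsOfSize4}, whose proof extracts it from the braid relation.
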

\begin{proof}
For $[\tau_1, \tau_2, \tau_3] \in \Sigma^i(2A,3A,C)$ we have $o(\tau_1)=2$, $o(\tau_2)=3$
and $\tau_3 = (\tau_1 \tau_2)^{-1}$.
Define $\sigma_1 = \tau_3 = \tau_2^2 \tau_1$, $\sigma_2 = \tau_1 \tau_2^2$ and $\sigma_3 = (\sigma_1 \sigma_2)^{-1}$.
Then $\sigma_1$ and $\sigma_2$ satisfy the assumptions of Theorem~\ref{BraidOrbitsOfSize4}, so for each
$(2,3)$-generating system of $G$ in $\Sigma^i(2A,3A,C)$ we can construct a $B_4$-orbit $Z_4$
using Theorem~\ref{BraidOrbitsOfSize4}.
Due to $\sigma_1 = \tau_1^{-1} \sigma_2 \tau_1$ and $\sigma_1 \sigma_2 \sigma_1 = \tau_1$ we get
$Z_4 \subseteq \Sigma^i(C,C,C,2A)$.
\par
\noindent
In addition, $[\sigma_1, \sigma_2, (\sigma_1, \sigma_2)^{-1}]$ is a generating $3$-system of $G$
in $\Sigma^i(C,C,3A^{-1})$ due to  $[\sigma_2] = [\sigma_1] = [\tau_3]$ 
and $\sigma_1 \sigma_2 = \tau_2$.
\end{proof}
\begin{example}
\label{M23_CCC2A_NoZ4}
The Mathieu group $M_{23}$ contains a single class $2A$ of involutions. By direct computation we get, that
no class vector $(C,C,C,2A)$ of $M_{23}$ contains a $B_4$-orbit $Z_4$ with length $\vert Z_4 \vert = 4$.
See also tables \ref{tab:Data_BraidOrbitsOfM23} and \ref{tab:CurrentStateOfComputations} for the rational classes of $M_{23}$.
This corresponds to the fact, that $M_{23}$ has no $(2,3)$-generating system, see \cite[Anhang 1.2]{RefHae1987}.
The same holds for the Suzuki groups $Sz(2^{2n+1})$ as they do not contain elements of order $3$.
\end{example}
\begin{example}
\label{M24_CCC2_Z4}
The Mathieu group $M_{24}$ has two classes of involutions $2A, 2B$ and two classes of elements of order $3$, 
namely $3A$ and $3B$ with $3A^{-1}=3A$ and $3B^{-1}=3B$.
Let $23A$ be one of its two classes of elements of order $23$. Due to
$l^i(2A,3A,23A) = l^i(2B,3B,23A) = 0$ and $l^i(2A,3B,23A) = l^i(2B,3A,23A) = 1$,
$M_{24}$ has two $(2,3,23)$-generating systems, see \cite[Anhang 2.2]{RefHae1987}.
By Corollary~\ref{Systems23ToBraidOrbitsOfSize4}, we get a $B_4$-orbit $Z_4$ in $\Sigma^i(23A,23A,23A,2A)$
and an other $B_4$-orbit $Z_4$ in $\Sigma^i(23A,23A,23A,2B)$.
Each $B_4$-orbit $Z_4$ is unique by its size, thus rigid and we get
two geometric Galois extensions of $\mathbb{Q}(\sqrt{-23})(u_1,u_2,u_3,u_4,t)$ with Galois group $M_{24}$.
\end{example}
\begin{example}
\label{L2_8_7A7A7A2A_Part1}
The set $\Sigma^i(7A,7A,7A,2A)$ of $L_2(8) \leq A_9$ splits under the action of $B_4$ into
two braid orbits $Z_4$ and $Z_{84}$ of length $4$ and $84$.
We have $[\underline{\sigma}] \in Z_4$ with
$[\underline{\sigma}]=$
$$[(2,9,4,3,5,7,6), (1,4,2,8,7,9,5), (2,9,4,3,5,7,6), (1,3)(2,7)(4,5)(6,8)].$$
$[\underline{\sigma}]$ satisfies $\sigma_1 = \sigma_3$, $\sigma_4 = \sigma_1 \sigma_2 \sigma_1$,
$o(\sigma_1 \sigma_2 \sigma_1 )=2$ and $\sigma_1 \sigma_2 \sigma_1 = \sigma_2 \sigma_1 \sigma_2$,
thus $[\underline{\sigma}] = h_1$ and $Z_4$ is the $B_4$-orbit from Theorem~\ref{BraidOrbitsOfSize4}.
The $B_4$-orbit $Z_4$ is a rigid orbit due to its size and
yields a geometric Galois realization of $L_2(8)$ over $\mathbb{Q}(\cos(\frac{2\pi}{7}))(u_1,u_2,u_3,u_4,t)$.
\end{example}
\noindent
In table \ref{tab:Data_B4OrbitsSiz4} we give examples of class vectors containing $B_4$-orbits $Z_4$ 
of length $\vert Z_4 \vert = 4$.
Always $Z_4$ is the $B_4$-orbit from Theorem~\ref{BraidOrbitsOfSize4} and we have $g_{Z_4}=0$, $\rho^t_4(\beta_{ij}) = (1)(3)$
and $\rho_4(B_4) \cong A_4$ on $Z_4$.
\begin{table}[H]
\centering
\footnotesize
\captionsetup{font=footnotesize}
\caption{Class vectors containing $B_4$-orbits $Z_4$ of length $\vert Z_4 \vert = 4$}
\label{tab:Data_B4OrbitsSiz4}
\begin{tabular}{lll}
\hline\noalign{\smallskip}
$G$       & $C$                         & $ \vert Z \vert $ \\
\noalign{\smallskip}\hline\noalign{\smallskip}
$A_5$     & $(5A,5A,5A,2A)$ & $4$ \\
$S_7$     & $(10A,10A,10A,2C)$  & $4, 260, 784$ \\
$L_2(7)$  & $(7A,7A,7A,2A)$ & $4$ \\
$L_2(8)$  & $(7A,7A,7A,2A)$ & $4, 84$ \\
$L_2(8)$  & $(9A,9A,9A,2A)$ & $4, 36$ \\
$L_2(11)$ & $(11A,11A,11A,2A)$  & $4, 32$ \\
$L_2(13)$ & $(13A,13A,13A,2A)$  & $4, 32$ \\
$L_2(16)$ & $(15A,15A,15A,2A)$  & $4, 300$ \\
$L_2(16)$ & $(17A,17A,17A,2A)$  & $4, 204$ \\
$L_3(3)$  & $(13A,13A,13A,2A)$  & $4, 288$ \\
$S_6(2)$ & $(15A,15A,15A,2D)$   & $4_2, 1.629.944$ \\
$M_{12}$  & $(10A,10A,10A,2B)$  & $4_2, 22.404_2$ \\
$M_{12}$  & $(11A,11A,11A,2A)$  & $4, 31.788$ \\
$M_{12}$  & $(11A,11A,11A,2B)$  & $4, 16.428, 16.844$ \\
\noalign{\smallskip}\hline
\end{tabular}
\end{table}
\par
\noindent
In contrast to $F_4$, the group $\langle \alpha_{4,1}^2 \rangle$ does not satisfy (\ref{eq:InvariantCondition}) from Proposition~\ref{BmActionOnFixedPointsF}.
So it cannot be used to define an invariant $F_{B_4}(.,.)$. But looking at the fixed points of the action of 
$\langle \alpha_{4,1}^2 \rangle$ on $\Sigma^i(C_1,C_2,C_3,C_4)^{sy}$ nevertheless generated a new generic $B_4$-orbit $Z_4$.
This is a hint to check all maximal finite subgroups of $H_m$ from \cite[Theorem 3]{RefGon2008}.
Theorem~\ref{BraidOrbitsOfSize4} will be used in section \ref{SectionSmallOrbitsInDimension6} for the construction of small $B_6$-orbits.
\par
\noindent
Finally, let us have a look at the action of $\langle \alpha_{4,0} \rangle$ on $\Sigma^i(C_1,C_2,C_3,C_4)^{sy}$ which is given by
$$[\underline{\sigma}]^{\alpha_{4,0}} = [\sigma_2, \sigma_3, \sigma_4, \sigma_1].$$
This is the action of $d(\eta_4)$ in \cite[III, § 1, Satz 3(4)]{RefMat1987} and from
\cite[III, § 2, Satz 5(d)]{RefMat1987} we get
\begin{proposition}
\label{FixedPointsOfAlpha40}
Let $G>I$ be a finite group with $Z(G)=I$ and $(C_1,C_2,C_3)$ a class vector of $G$ with $l^i(C_1,C_2,C_3)>0$,
$[\underline{\sigma}] = [\sigma_1,\sigma_2,\sigma_3] \in \Sigma^i(C_1,C_2,C_3)$ with $\sigma_1^4 = \sigma_2^4 = \iota$,
$G = \langle \sigma_1^{-2} \sigma_3 \sigma_1^2, \sigma_1^{-1} \sigma_3 \sigma_1, \sigma_3 \rangle$ and $C = C_3$. Then we have
$$v = [\sigma_1^{-2} \sigma_3 \sigma_1^2, \sigma_1^{-1} \sigma_3 \sigma_1, \sigma_3, \sigma_1^{-3} \sigma_3 \sigma_1^3] \in \Sigma^i(C,C,C,C)$$
and
$v^{\alpha_{4,0}} = v$.
\end{proposition}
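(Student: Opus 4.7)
The plan is to verify in sequence the three conditions needed for $v\in\Sigma^i(C,C,C,C)$ and then to exhibit an explicit conjugator witnessing $v^{\alpha_{4,0}}=v$.

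Two of the three conditions are almost free. Every component of $v$ is a conjugate of $\sigma_3$, hence lies in $C_3=C$, so the class vector condition holds. The generation condition is given by hypothesis on the first three components of $v$; adjoining a fourth element of $G$ cannot shrink the span, so the four components still generate $G$.

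The real content is the product relation $v_1 v_2 v_3 v_4 = \iota$. First rewrite $\sigma_1\sigma_2\sigma_3=\iota$ cyclically as $\sigma_3\sigma_1=\sigma_2^{-1}$; then the hypothesis $\sigma_2^4=\iota$ translates into $(\sigma_3\sigma_1)^4=\iota$. Combining this with $\sigma_1^4=\iota$ (so that $\sigma_1^{-3}=\sigma_1$ and $\sigma_1^{3}=\sigma_1^{-1}$), the fourfold product telescopes after adjacent $\sigma_1^{\pm k}$ factors cancel, giving
$$v_1 v_2 v_3 v_4 \;=\; \sigma_1^{-2}\,(\sigma_3\sigma_1)^{3}\,\sigma_3\,\sigma_1^{-1} \;=\; \sigma_1^{-2}\,(\sigma_3\sigma_1)^{4}\,\sigma_1^{-2} \;=\; \sigma_1^{-4} \;=\; \iota.$$
This is the one calculation that needs care, and it is the main (indeed only) algebraic obstacle; everything else is relabelling.

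For the fixed-point statement, observe that $\alpha_{4,0}=\beta_2\beta_3\beta_4=\epsilon_4$, so Proposition~\ref{EpsilonHmActsCyclic} yields
$$v^{\alpha_{4,0}} \;=\; [\sigma_1^{-1}\sigma_3\sigma_1,\ \sigma_3,\ \sigma_1^{-3}\sigma_3\sigma_1^{3},\ \sigma_1^{-2}\sigma_3\sigma_1^{2}].$$
Each component of $v$ has the form $\sigma_1^{-k}\sigma_3\sigma_1^{k}$ for some $k$, and simultaneous conjugation by $\tau=\sigma_1$ increments each such $k$ by $1$. Using $\sigma_1^4=\iota$ to reduce exponents modulo $4$, the cyclic shift induced by $\alpha_{4,0}$ is exactly undone: the shifted tuple conjugated by $\sigma_1$ coincides entry by entry with $v$. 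Hence $v^{\alpha_{4,0}}=v$ in $\Sigma^i(C,C,C,C)$, completing the proof.
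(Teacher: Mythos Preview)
Your proof is correct and follows essentially the same approach as the paper: the key computation is the telescoping of $v_1v_2v_3v_4$ into a conjugate of $(\sigma_3\sigma_1)^4=\sigma_2^{-4}=\iota$, which you carry out just as the paper does (your version lands on $\sigma_1^{-2}(\sigma_3\sigma_1)^4\sigma_1^{-2}$ rather than $\sigma_1^{-2}(\sigma_3\sigma_1)^4\sigma_1^{2}$, but this is only a cosmetic difference in how the exponent reductions are ordered). The paper leaves the fixed-point assertion $v^{\alpha_{4,0}}=v$ to a citation, whereas you give the explicit conjugator $\sigma_1$; this is a welcome addition and exactly the right argument.
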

\begin{proof}
It remains to show $v_1 v_2 v_3 v_4 = \iota$ for $v = [v_1, v_2, v_3, v_4]$. We have
$$v_1 v_2 v_3 v_4 = \sigma_1^{-2} \sigma_3 \sigma_1^2 \sigma_1^{-1} \sigma_3 \sigma_1 
\sigma_3 \sigma_1^{-3} \sigma_3 \sigma_1^3 =$$
$$\sigma_1^{-2} \sigma_3 \sigma_1 \sigma_3 \sigma_1 \sigma_3 \sigma_1 \sigma_3 \sigma_1 \sigma_1^2 =
\sigma_1^{-2} (\sigma_3 \sigma_1)^4 \sigma_1^2 = \iota$$
due to $\sigma_2^{-1} = \sigma_3 \sigma_1$ and $\sigma_2^4 = \iota$.
\end{proof}
\noindent
Note that in \cite[III, § 2, Satz 5(d)]{RefMat1987} $G = \langle \sigma_1^{-1} \sigma_3 \sigma_1, \sigma_3 \rangle$
is assumed, which is stronger than $G = \langle \sigma_1^{-2} \sigma_3 \sigma_1^2, \sigma_1^{-1} \sigma_3 \sigma_1,
\sigma_3 \rangle$.
\par
\noindent
Is $\{ v \}^{H_4}$ a generic $H_4$-orbit that splits into generic $B_4$-orbits? The answer is no, see 
Example~\ref{A7TransformationFrom4A4A7A} and Example~\ref{S6TransformationFrom4A4B6A}, which show different
values for $\vert \{ v \}^{H_4} \vert$.
\begin{example}
\label{A7TransformationFrom4A4A7A}
Let $G = A_7$.
Then for $[\underline{\sigma}]=$
$$[(1)(2,3)(4,5,6,7),(1,2)(3,4,6,7)(5),(1,3,6,7,5,4,2)] \in \Sigma^i(4A,4A,7A)$$
all assumptions from Proposition~\ref{FixedPointsOfAlpha40} are satisfied and we get $\vert \{ v \}^{H_4} \vert = 126$.
The $H_4$-orbit $\{ v \}^{H_4}$ splits into three $B_4$-orbits $\{ v \}^{B_4}$ of length $42$.
The set $\Sigma^i(7A,7A,7A,7A)$ splits under the action of $H_4$ into orbits of length
$$6_2, 21_2, 120, 126_2, 252, 280, 300_2, 308, 432_2$$
and under the action of $B_4$ into orbits of length
$$2_6, 21_2, 42_6, 84_3, 120, 280, 300_2, 308, 432_2.$$
\end{example}
\begin{example}
\label{S6TransformationFrom4A4B6A}
Let $G = S_6$.
Then for $[\underline{\sigma}]=$
$$[(1,2)(3,4,6,5),(1,5,6,3)(2)(4),(1,5,2)(3,4)(6)] \in \Sigma^i(4A,4B,6A)$$
all assumptions from Proposition~\ref{FixedPointsOfAlpha40} are satisfied and we get $\vert \{ v \}^{H_4} \vert = 216$.
The $H_4$-orbit $\{ v \}^{H_4}$ splits into three $B_4$-orbits $\{ v \}^{B_4}$ of length $72$.
The set $\Sigma^i(6A,6A,6A,6A)$ splits under the action of $H_4$ into orbits of length
$$36, 60, 120, 216, 360$$
and under the action of $B_4$ into orbits of length
$$12_3, 20_3, 72_3, 120, 360.$$
\end{example}
\noindent
Proposition~\ref{FixedPointsOfAlpha40} can be used to construct $B_4$-orbits $Z$ in $\Sigma^i(C,C,C,C)$.
Due to $\varphi_{4,2} = (\beta_2 \beta_3 \beta_4)^2 = \alpha_{4,0}^2$ we get $v^{\varphi_{4,2}} = v^{\alpha_{4,0}^2} = v$.
Thus $\{ v \}^{B_4}$ is a $B_4$-orbit with invariant $F_{B_4}(F_{4,2}, v) = 1$.
Using Theorem~\ref{BraidOrbitsOfSize2}(b) and Theorem~\ref{TranslationOfBraidOrbits}(c) 
leads to the same $B_4$-orbits $Z$ in $\Sigma^i(C,C,C,C)$.

\section{Class vectors of dimension 4 in $M_{23}$}\label{SectionClassVectorsOfDimension4InM23}

In \cite{RefHae1987} and \cite{RefHae1991}, for some class vectors of length $4$ of $M_{23}$
the braid orbits and braid genera have been computed. No orbits $Z$ resp. $Z^{sy}$ with $g_Z=0$ or $g_{Z^{sy}}=0$
have been found.
Computation of braid orbits for several simple groups showed, that rational symmetric class vectors $(C,C,C,C)$
seem to be suitable candidates, because $\Sigma^i(C,C,C,C)$ often splits into multiple $B_4$-orbits,
some of them rigid, small and with small genus.
For these $B_4$-orbits $Z$ almost always $ \vert Z \vert  \leq l^i(C,C,2,2)$ is satisfied.
See also section \ref{SectionHeuristicsForSearching} for heuristics how to find suitable class vectors.
\par
\noindent
Interesting class vectors for $M_{23}$ are 
$$(3A,3A,3A,3A),(4A,4A,4A,4A),(5A,5A,5A,5A),$$
$$(6A,6A,6A,6A),(8A,8A,8A,8A).$$
Table \ref{tab:Data_4DimClassvectorsM23} and table \ref{tab:Data_BraidOrbitsOfM23} contain details
for these class vectors.
\par
\noindent
For a class vector $(C_1,\dots, C_m)$, the {\it normalized structure constant} defined in
\cite[I, 5.3 (5.2)]{RefMM2018} is denoted by $n(C_1,\dots, C_m)$.
With the {\it floor}-function $\lfloor . \rfloor$, we have
$$l^i(C_1,\dots, C_m) \leq \lfloor n(C_1,\dots, C_m) \rfloor$$
due to the definition of $n(C_1,\dots, C_m)$, and \cite[I, Corollary 5.6]{RefMM2018}.
By \cite[I, Theorem 5.8]{RefMM2018} $n(C_1,\dots, C_m)$ can be computed from the character table of $G$ via
$$
n(C_1,\dots, C_m)=\frac{{\vert Z(G) \vert}\cdot{\vert G \vert}^{m-2}}
{\prod_{i=1}^m {\vert C_G(\sigma_i) \vert}} \sum\limits_{\chi \in Irr(G)}
\frac{\prod_{i=1}^m \chi(\sigma_i)}{\chi(\iota)^{m-2}}
$$
with $\sigma_i \in C_i$ for $i=1,\dots, m$. Here, $C_G(\sigma)$ is the centralizer of $\sigma$ in $G$ and
$Irr(G)$ the set of complex irreducible characters of $G$.
We take character tables from \cite{RefAtlas1985}.
\begin{table}[!htbp]
\centering
\footnotesize
\captionsetup{font=footnotesize}
\caption{Some class vectors of length $3$ and $4$ of $M_{23}$}
\label{tab:Data_4DimClassvectorsM23}
\begin{tabular}{lrrl}
\hline\noalign{\smallskip}
$C$ & $l^i(C)$ & $\lfloor n(C) \rfloor$ & Ref.\\
\noalign{\smallskip}\hline\noalign{\smallskip}
$(2A,2A,2A)$ & 0 & 0 & \cite{RefHae1987}\\
$(3A,3A,2A)$ & 0 & 1 & \cite{RefHae1987}\\
$(4A,4A,2A)$ & 0 & 8 & \cite{RefHae1987}\\
$(5A,5A,2A)$ & 0 & 40 & \cite{RefHae1987}\\
$(6A,6A,2A)$ & 0 & 34 & \cite{RefHae1987}\\
$(8A,8A,2A)$ & 28 & 61 & \cite{RefHae1987}\\
\noalign{\smallskip}\hline\noalign{\smallskip}
$(2A,2A,2A,2A)$ & 0 & 19 & \cite{RefHae1987}\\
$(3A,2A,2A,2A)$ & 0 & 112 & \cite{RefHae1987}\\
$(4A,2A,2A,2A)$ & 0 & 378 & \cite{RefHae1987}\\
$(5A,2A,2A,2A)$ & 0 & 658 & \cite{RefHae1987}\\
$(6A,2A,2A,2A)$ & 0 & 547 & \cite{RefHae1987}\\
$(8A,2A,2A,2A)$ & 0 & 664 & \cite{RefHae1987}\\
\noalign{\smallskip}\hline\noalign{\smallskip}
$(3A,3A,2A,2A)$ & 0 & 1,105 & \cite{RefHae1987}\\
$(4A,4A,2A,2A)$ & 2,456 & 19,048 & \cite{RefHae1987}, \cite{RefHae1991}\\
$(5A,5A,2A,2A)$ & 30,400 & 83,906 & \cite{RefHae1987}, \cite{RefHae1991}\\
$(6A,6A,2A,2A)$ & 72,528 & 101,811 & \cite{RefHae1991}\\
$(8A,8A,2A,2A)$ & 198,488 & 228,399 & \cite{RefHae1991}\\
\noalign{\smallskip}\hline\noalign{\smallskip}
$(3A,3A,3A,2A)$ & 996 & 12,352 & \cite{RefHae1987}, \cite{RefHae1991}\\
$(4A,4A,4A,2A)$ & 768,528 & 1,298,855 & -\\
$(5A,5A,5A,2A)$ & 8,569,480 & 12,593,829 & -\\
$(6A,6A,6A,2A)$ & 21,846,222 & 22,398,281 & -\\
$(8A,8A,8A,2A)$ & 74,725,280 & 75,791,456 & -\\
\noalign{\smallskip}\hline\noalign{\smallskip}
$(3A,3A,3A,3A)$ & 37,296 & 155,711 & \cite{RefHae1991}\\
$(4A,4A,4A,4A)$ & 85,607,040 & 102,663,428 & -\\
$(5A,5A,5A,5A)$ & 1,811,840,784 & 2,124,083,306 & -\\
$(6A,6A,6A,6A)$ & 5,004,867,456 & 5,020,189,817 & -\\
$(8A,8A,8A,8A)$ & 25,382,467,456 & 25,432,112,544 & -\\
\noalign{\smallskip}\hline
\end{tabular}
\end{table}
\begin{table}[!htbp]
\centering
\footnotesize
\captionsetup{font=footnotesize}
\caption{Some braid orbits of $M_{23}$}
\label{tab:Data_BraidOrbitsOfM23}
\begin{tabular}{llll}
\hline\noalign{\smallskip}
$C$ & $ \vert Z \vert $      & $g_Z$ & \\
    & $ \vert Z^{sy} \vert $ & $g_{Z^{sy}}$ & \\
\noalign{\smallskip}\hline\noalign{\smallskip}
$(4A,4A,2A,2A)$ & 2,456 & 447 & \\
                & 2,456 & 169 & \\
$(5A,5A,2A,2A)$ & 30,400 & 5,223 & \\
                & 30,400 & 2,071 & \\
$(6A,6A,2A,2A)$ & 72,528 & 15,125 & \\
                & 72,528 & 6,065 & \\
$(8A,8A,2A,2A)$ & 198,488 & 41,277 & \\
                & 198,488 & 16,617 & \\
\noalign{\smallskip}\hline\noalign{\smallskip}
$(3A,3A,3A,2A)$ & 996 & 193 & \\
                & 996 & 33 & \\
$(4A,4A,4A,2A)$ & 768,528 & 212,497 & \\
                & 768,528 & 35,217 & \\
$(5A,5A,5A,2A)$ & 8,569,480 & 2,522,100 & \\
                & 8,569,480 & 419,486 & \\
$(6A,6A,6A,2A)$ & ? & ? & \\
                & ? & ? & \\
$(8A,8A,8A,2A)$ & ? & ? & \\
                & ? & ? & \\
\noalign{\smallskip}\hline\noalign{\smallskip}
$(3A,3A,3A,3A)$ & 8,316, 28,980 & 2,029, 7,525 & \\
                & 8,316, 28,980 & 325, 1,206 & \\
$(4A,4A,4A,4A)$ & $2,456_3$ & $459_3$ & *\\
                & 7,368     & 169 & *\\
$(5A,5A,5A,5A)$ & $30,400_3$ & $5,626_3$ & *\\
                & 91,200 & 2,071 & *\\
$(6A,6A,6A,6A)$ & $72,528_3$ & $14,380_3$ & *\\
                & 217,584 & 6,065 & *\\
$(8A,8A,8A,8A)$ & $2_{84}$, $198,488_3$ & $0_{84}$, $39,505_3$ & *\\
                & $6_{28}$, 595,464 & $0_{28}$, 16,617 & *\\
\noalign{\smallskip}\hline
\end{tabular}
\end{table}
\par
\noindent
The $84$ orbits of length $2$ in $\Sigma^i(8A,8A,8A,8A)$ come from Theorem~\ref{BraidOrbitsOfSize2} while the $28$
orbits of length $6$ come form Theorem~\ref{BraidOrbitsOfSize6}, both due to $l^i(8A,8A,2A)=28$.
Theorem~\ref{BraidOrbitsOfSize12} leads to $14$ $H_4$-orbits of length $12$ with
$\rho_4(H_4) \cong (C_2 \times C_2):S_4 < A_{12}$.
\par
\noindent
The $B_4$-action for $(3A,3A,3A,3A)$ could be computed completely. For the remaining class vectors,
we tried to find small orbits by iterating through all $4$-systems 
$[\underline{\sigma}] \in \Sigma^i(C_1,C_2,C_3,C_4)$ and checking whether the length of the $B_4$-orbit 
of $[\underline{\sigma}]$ lies under a predefined limit $l$, i.e. we computed the set 
$$B_4((C_1,C_2,C_3,C_4),l) = 
\{ [\underline{\sigma}] \in \Sigma^i(C_1,C_2,C_3,C_4) \mid \vert [\underline{\sigma}]^{B_4} \vert \leq l \}$$
which is a union of $B_4$-orbits by definition. As seen above, $l=l^i(C,C,2,2)$ would be an interesting limit
for $(C,C,C,C)$, but the set $B_4((C_1,C_2,C_3,C_4),l)$ is hard to compute for big values of $l$.
Table \ref{tab:CurrentStateOfComputations} gives the current state of the computations for $M_{23}$.
\begin{table}[!htbp]
\centering
\footnotesize
\captionsetup{font=footnotesize}
\caption{Current state of the computations for $M_{23}$}
\label{tab:CurrentStateOfComputations}
\begin{tabular}{lrrl}
\hline\noalign{\smallskip}
Class vector & $l$ & $ \vert B_4((C_1,C_2,C_3,C_4),l) \vert $ & $B_4$-orbits \\
\noalign{\smallskip}\hline\noalign{\smallskip}
$(4A,4A,4A,4A)$ & 2,500 & 7,368 & $2,456_3$ \\
$(5A,5A,5A,5A)$ & 250 & 0 & - \\
$(6A,6A,6A,2A)$ & 4 & 0 & - \\
$(6A,6A,6A,6A)$ & 2 & 0 & - \\
$(8A,8A,8A,2A)$ & 4 & 0 & - \\
$(8A,8A,8A,8A)$ & 2 & 168 & $2_{84}$ \\
\noalign{\smallskip}\hline
\end{tabular}
\end{table}
\par
\noindent
Thus $\Sigma^i(4A,4A,4A,4A)$ has three $B_4$-orbits of length $2,456$ and no smaller $B_4$-orbits.
These orbits come from $(4A,4A,2A,2A)$ by Theorem~\ref{TranslationOfBraidOrbits}(b).
Due to $g_Z=459$ and $g_{Z^{sy}}=169$ for the corresponding orbit $Z^{sy}$ with size 7,368,
we can rule out the class vector $(4A,4A,4A,4A)$ because orbits $Z$ with $ \vert Z \vert  \geq 2,456$
will have $g_Z \geq 459$ and $g_{Z^{sy}} \geq 169$ with very high probability.
\par
\noindent
It remains to compute the cardinalities of the sets
$B_4((5A,5A,5A,5A),30,400)$,
$B_4((6A,6A,6A,6A),72,528)$ and
$B_4((8A,8A,8A,8A),198,488)$.
Either these class vectors can be ruled out similar to $(4A,4A,4A,4A)$, or small $B_4$-orbits occur
that may allow geometric Galois realizations of $M_{23}$ over $\mathbb{Q}(v,t)$.
In the age of cloud computing, it seems possible to perform these computations as they can be
easily done in parallel.
Looking at non-rational class vectors of $M_{23}$, we can close this section with
\begin{theorem}
\label{M23AsGaloisGroupOverQSqrt}
For the Mathieu group $M_{23}$ exist two geometric Galois extensions, namely $N_1/\mathbb{Q}(\sqrt{-7})(v,t)$
and $N_2/\mathbb{Q}(\sqrt{-15})(v,t)$ with 
$$Gal(N_1/\mathbb{Q}(\sqrt{-7})(v,t)) \cong Gal(N_2/\mathbb{Q}(\sqrt{-15})(v,t)) \cong M_{23}$$
and corresponding class vectors $(14A,2A,2A,2A)$ resp. $(15A,2A,2A,2A)$.
In particular, $M_{23}$ occurs as Galois group over $\mathbb{Q}(\sqrt{-7})$ and $\mathbb{Q}(\sqrt{-15})$.
\end{theorem}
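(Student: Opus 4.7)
The plan is to apply the braid rigidity machinery from \cite[III, \S 7]{RefMM2018} to the two class vectors $(14A,2A,2A,2A)$ and $(15A,2A,2A,2A)$ of $M_{23}$. First I would compute the sets $\Sigma^i(14A,2A,2A,2A)$ and $\Sigma^i(15A,2A,2A,2A)$ and their decompositions into $B_4$-orbits (and, where relevant, $B_4^{sy}$-orbits); the normalized structure constants $n(14A,2A,2A,2A)$ and $n(15A,2A,2A,2A)$ from \cite[I, Theorem 5.8]{RefMM2018} give upper bounds for $l^i$, so I would compute both the character-theoretic bound and $l^i$ explicitly (via direct enumeration in GAP \cite{RefGAP} or via the author's software) to localise small orbits.

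Next, for each class vector I would identify a braid orbit $Z \subseteq \Sigma^i(C_1,C_2,C_3,C_4)$ of genus $g_Z = 0$ which is rigid, in the sense of being uniquely determined by its length among all $B_4$-orbits of the given class vector (cf.\ the rigidity criterion in \cite[III, 5]{RefMM2018}). Once such a $Z$ is in hand, the general theorem \cite[III, Theorem 7.10]{RefMM2018} (or its corollary \cite[III, Corollary 5.8]{RefMM2018}, depending on which symmetry type of orbit survives) applies, because $M_{23}$ has trivial centre, so $Z$ yields a geometric Galois extension $N/k(v,t)$ with $\mathrm{Gal}(N/k(v,t)) \cong M_{23}$, where the field of constants $k$ is the fixed field in $\overline{\mathbb{Q}}$ of the stabiliser of the (symmetrised) class vector under the natural action of $\mathrm{Gal}(\overline{\mathbb{Q}}/\mathbb{Q})$ on conjugacy classes.

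The arithmetic descent step is where the specific quadratic fields $\mathbb{Q}(\sqrt{-7})$ and $\mathbb{Q}(\sqrt{-15})$ appear. The class $2A$ of $M_{23}$ is rational, while the classes of order $14$ resp.\ $15$ split into algebraically conjugate pairs $\{14A,14B\}$ resp.\ $\{15A,15B\}$, and from the ATLAS \cite{RefAtlas1985} one reads off $\mathbb{Q}_{14A} = \mathbb{Q}(\sqrt{-7})$ and $\mathbb{Q}_{15A} = \mathbb{Q}(\sqrt{-15})$. Therefore the stabiliser in $\mathrm{Gal}(\overline{\mathbb{Q}}/\mathbb{Q})$ of the class vector $(14A,2A,2A,2A)$ is precisely $\mathrm{Gal}(\overline{\mathbb{Q}}/\mathbb{Q}(\sqrt{-7}))$, and analogously for $15A$. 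Combined with rigidity of $Z$, this forces $k = \mathbb{Q}(\sqrt{-7})$ resp.\ $k = \mathbb{Q}(\sqrt{-15})$, yielding the two geometric extensions. Finally, specialising $v$ and $t$ via Hilbert's irreducibility criterion (as in the general framework of \cite{RefMM2018}) realises $M_{23}$ as a Galois group over $\mathbb{Q}(\sqrt{-7})$ and $\mathbb{Q}(\sqrt{-15})$.

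The main obstacle is the computational localisation of a small, rigid $B_4$-orbit $Z$ with $g_Z = 0$: since $l^i(14A,2A,2A,2A)$ and $l^i(15A,2A,2A,2A)$ are not included in Table~\ref{tab:Data_4DimClassvectorsM23}, the search for a suitable orbit requires iterating through $\Sigma^i$ in the spirit of the computations recorded in Table~\ref{tab:CurrentStateOfComputations}. Once a candidate orbit is singled out, verifying its genus and rigidity is mechanical via Proposition~\ref{ActionOfPureHurwitzGroup} and the invariants developed in sections \ref{SectionInvariantsUnderTheActionOfBraids}--\ref{SectionFixedPointsAndOrbitsOfSize4}; the remaining work is then bookkeeping to check that all hypotheses of \cite[III, Theorem 7.10]{RefMM2018} hold and to read off the correct constant field from the arithmetic action on the class vector.
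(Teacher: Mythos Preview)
Your overall strategy---compute $\Sigma^i$, find a rigid orbit, check genus~$0$, invoke \cite[III, Theorem 7.10]{RefMM2018}, and read off the constant field from $\mathbb{Q}_{14A}=\mathbb{Q}(\sqrt{-7})$ resp.\ $\mathbb{Q}_{15A}=\mathbb{Q}(\sqrt{-15})$---is exactly the paper's approach, but two points of your plan are off relative to what the paper actually does.

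First, there is no search for a small orbit: in both cases $\Sigma^i$ is already a \emph{single} $B_4$-orbit (of size $84$ resp.\ $90$), hence trivially rigid. The invariants of Sections~\ref{SectionInvariantsUnderTheActionOfBraids}--\ref{SectionFixedPointsAndOrbitsOfSize4} play no role here. Second, and more importantly, the paper does \emph{not} establish $g_Z=0$ for the bare $B_4$-orbit. What makes the argument work is the full $S_3$-symmetry on the three identical $2A$ positions: with $S=\langle(2,3),(2,3,4)\rangle$ one computes the cycle types of $\beta_3$, $\eta_{23}$, $\eta_{234}$ on $Z^{sy}=Z$ and obtains $g_{Z^{sy}}=0$ via the third genus formula of Section~\ref{SectionTheActionOfBraidsInDimension4}. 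Your parenthetical mention of $B_4^{sy}$-orbits and of Theorem~7.10 shows you are aware of this variant, but your text repeatedly targets $g_Z=0$, which is not what is verified (and likely fails). So the fix is simply to replace ``$g_Z=0$'' by ``$g_{Z^{sy}}=0$ for the $\langle(2,3),(2,3,4)\rangle$-symmetrised orbit'' and drop the orbit-search discussion.
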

\begin{proof}
For the class vector $(14A,2A,2A,2A)$ of $M_{23}$ we have 
\par
\noindent
$l^i(14A,2A,2A,2A)=84$ and $\Sigma^i(14A,2A,2A,2A)$ is a single $B_4$-orbit. 
Using the group of symmetries $S = \langle (2,3), (2,3,4) \rangle$ and
$$\rho^t_4(\beta_3)=(3)(4)^3(5)^3(6)^9, \rho^t_4(\eta_{23})=(2)^{42}, \rho^t_4(\eta_{234})=(3)^{28},$$
we get $g_{Z_1^{sy}} = 0$ for $Z_1^{sy} = \Sigma^i(14A,2A,2A,2A)$.
\par
\noindent
For the class vector $(15A,2A,2A,2A)$ of $M_{23}$ we have $l^i(15A,2A,2A,2A)=90$ and $\Sigma^i(15A,2A,2A,2A)$
is a single $B_4$-orbit. 
Again using the group of symmetries $S = \langle (2,3), (2,3,4) \rangle$ and
$$\rho^t_4(\beta_3)=(3)(4)^2(5)^5(6)^9, \rho^t_4(\eta_{23})=(2)^{45}, \rho^t_4(\eta_{234})=(3)^{30},$$
we get $g_{Z_2^{sy}} = 0$ for $Z_2^{sy} = \Sigma^i(15,2A,2A,2A)$.
Now all assumptions from \cite[III, Theorem 7.10]{RefMM2018} are satisfied for $Z_1^{sy}$, $Z_2^{sy}$
and Theorem~\ref{M23AsGaloisGroupOverQSqrt} follows.
\end{proof}
\noindent
The class vectors in Theorem~\ref{M23AsGaloisGroupOverQSqrt} are the first ones yielding a direct geometric 
realization of $M_{23}$ as Galois group over rational function fields $k(v,t)$ with $(k:\mathbb{Q}) \leq 2$. 
The geometric $M_{23}$-Galois extensions over $\mathbb{Q}(\sqrt{-23})(t)$ and $\mathbb{Q}(\sqrt{-7})(t)$
in \cite{RefHS1985} and \cite{RefHae1987} have been deduced from $M_{24}$-Galois extensions.

\section{Class vectors of dimension 4 in $M_{24}$}\label{SectionClassVectorsOfDimension4InM24}

In \cite{RefHae1987} and \cite{RefHae1991}, the braid orbits and braid genera for all rational class 
vectors of length $4$ in $M_{24}$ with $g_{M_{23}} \leq 1$ have been computed.
\par
\noindent
Only one class vector with $g_Z \leq 1$ for a $B_4$-orbit $Z$, namely $(12B,2A,2A,2A)$
with $l^i(12B,2A,2A,2A)=144$, was found.
Here, $Z_1 = \Sigma^i(12B,2A,2A,2A)$ is a rigid $B_4$-orbit with
$$\rho^t_4(\beta_{12})=\rho^t_4(\beta_{13})=\rho^t_4(\beta_{14})=(2)^6(3)^{39}(5)^3,$$
$$\rho^t_4(\beta_3)=\rho^t_4(\beta_4)=(3)(4)^3(5)^3(6)^{19},$$
$$\rho^t_4(\eta_{23})=\rho^t_4(\eta_{34})=(2)^{72}, \rho^t_4(\eta_{234})=(3)^{48},$$
$g_{Z_1}=1$ and $g_{Z^{sy}_1}=0$ for all non-trivial groups of symmetries $S(12B,2A,2A,2A)$.
This leads to geometric Galois realizations $N_1/\mathbb{Q}(v,t)$ with $Gal(N_1/\mathbb{Q}(v,t)) \cong M_{24}$
for all groups of symmetries, see
\cite[Nachtrag]{RefHae1987}, \cite[Beispiel 1.1]{RefHae1991}, \cite[Satz 9.4]{RefMat1991}
and \cite[III, Theorem 7.12]{RefMM2018}.
We have $g_{M_{23}}=0$, but $N^{M_{23}}_1$ is not a rational function field, see \cite{RefGra1996}.
\par
\noindent
Applying Theorem~\ref{TranslationOfBraidOrbits}(b) to $Z_1 = \Sigma^i(12B,2A,2A,2A)$ generates a rigid $B_4$-orbit $Z_2$
of length $ \vert Z_2 \vert =144$ in $\Sigma^i(12B,12B,2A,2A)$ and with
$$\rho^t_4(\beta_{12})=(1)^{12}(3)^{39}(5)^3, \rho^t_4(\beta_{13})=\rho^t_4(\beta_{14})=(3)(4)^3(5)^3(6)^{19},$$
$$\rho^t_4(\beta_4)=(2)^6(3)^{39}(5)^3, \rho^t_4(\eta_{34})=(2)^{72},$$
$g_{Z_2}=20$ and $g_{Z^{sy}_2}=0$ for $S(12B,12B,2A,2A) = \langle (3,4) \rangle$.
This leads to a geometric Galois realization
$N_2/\mathbb{Q}(v,t)$ with $Gal(N_2/\mathbb{Q}(v,t)) \cong M_{24}$ and $g_{M_{23}}=7$.
\par
\noindent
Applying Theorem~\ref{TranslationOfBraidOrbits}(b) once again to $Z_2$ generates a rigid 
$B_4$-orbit $Z_3$ of length $ \vert Z_3 \vert =144$ in $\Sigma^i(12B,12B,12B,12B)$ and with
$$\rho^t_4(\beta_{12})=\rho^t_4(\beta_{13})=\rho^t_4(\beta_{14})=(2)^6(3)^{39}(5)^3,$$
$$\rho^t_4(\beta_3)=\rho^t_4(\beta_4)=(3)(4)^3(5)^3(6)^{19},$$
$$\rho^t_4(\eta_{23})=\rho^t_4(\eta_{34})=(2)^{72}, \rho^t_4(\eta_{234})=(3)^{48},$$
$g_{Z_3}=1$ and $g_{Z^{sy}_3}=0$ for all non-trivial groups of symmetries
\par
\noindent
$S(12B,12B,12B,12B)$ fixing the first class.
This leads to geometric Galois realizations 
$N_3/\mathbb{Q}(v,t)$ with $Gal(N_3/\mathbb{Q}(v,t)) \cong M_{24}$ and $g_{M_{23}}=21$
for all groups of symmetries fixing the first class.
With 
\begin{itemize}
\item[]
$l^i(12B,2A,2A,2B)=l^i(12B,2A,2B,2A)=1,224,$
\item[]
$l^i(12B,2A,2B,2B)=3,072$
\end{itemize}
and Theorem~\ref{TranslationOfBraidOrbits}(b) we can recognize
the two braid orbits of length $1,224+1,224=2,448$ and $3,072$ in $\Sigma^i(12B,12B,2A,2A)$ as translates
originating from the class vectors 
$$(12B,2A,2A,2B),(12B,2A,2B,2A),(12B,2A,2B,2B).$$
\par
\noindent
Applying Theorem~\ref{TranslationOfBraidOrbits}(b) to the four braid orbits in $\Sigma^i(12B,12B,2A,2A)$, we get four
braid orbits of length $144, 2,448, 3,072$ and $982,102$ in the set $\Sigma^i(12B,12B,12B,12B)$.
\par
\noindent
This procedure can be repeated starting from class vectors $(12B,2B,.,.)$ resp. $(12B,12B,.,.)$ with involution classes
$2A$ or $2B$ on the empty positions.
\par
\noindent
Using Theorems \ref{BraidOrbitsOfSize2} and \ref{BraidOrbitsOfSize6}, $l^i(12B,12B,2A)=75$ and $l^i(12B,12B,2B)=233$,
we get $75 \cdot 3 + 233 \cdot 3 = 225+699$ $B_4$-orbits $Z$ of length $2$ 
and $75+233$ $B^{sy}_4$-orbits $Z^{sy}$ of length $6$ in $\Sigma^i(12B,12B,12B,12B)$.
If one of these orbits $Z^{sy}$ would be rigid, a further Galois extension 
of $M_{24}$ over $\mathbb{Q}(v,t)$ could be deduced.
\par
\noindent
Tables \ref{tab:Data_4DimClassvectorsM24} and \ref{tab:Data_BraidOrbitsOfM24} contain more details for the
class vectors in this section.
For abbreviation we use $(12B,2A_3)=(12B,2A,2A,2A)$, $(12B_2,2A_2)=(12B,12B,2A,2A)$ and $(12B_4)=(12B,12B,12B,12B)$.
\begin{table}[!htbp]
\centering
\footnotesize
\captionsetup{font=footnotesize}
\caption{Some class vectors of length $4$ of $M_{24}$}
\label{tab:Data_4DimClassvectorsM24}
\begin{tabular}{lrrrl}
\hline\noalign{\smallskip}
$C$ & $l^i(C)$ & $\lfloor n(C) \rfloor$ & $g_{M_{23}}$ & Ref.\\
\noalign{\smallskip}\hline\noalign{\smallskip}
$(12B,2A_3)$ & 144 & 180 & 0 & \cite{RefHae1987}, \cite{RefHae1991}, \cite{RefMat1991}\\
$(12B_2,2A_2)$ & 987,766 & 995,938 & 7 & \cite{RefHae1991}\\
$(12B_4)$ & ? & 2,898,930,176,000 & 21 & -\\
\noalign{\smallskip}\hline
\end{tabular}
\end{table}
\begin{table}[!htbp]
\centering
\footnotesize
\captionsetup{font=footnotesize}
\caption{Some braid orbits of $M_{24}$}
\label{tab:Data_BraidOrbitsOfM24}
\begin{tabular}{llll}
\hline\noalign{\smallskip}
$C$ & $ \vert Z \vert $      & $g_Z$ & \\
    & $ \vert Z^{sy} \vert $ & $g_{Z^{sy}}$ & \\
\noalign{\smallskip}\hline\noalign{\smallskip}
$(12B,2A_3)$   & 144 & 1 & \\
               & 144 & 0 & \\
\noalign{\smallskip}\hline\noalign{\smallskip}
$(12B_2,2A_2)$ & 144, 2,448, 3,072, 982,102 & 20, 364, 484, 224,490 & \\
               & 144, 2,448, 3,072, 982,102 & 0, 85, 159, 95,861 & \\
\noalign{\smallskip}\hline\noalign{\smallskip}
$(12B_4)$      & $2_{225}$, $2_{699}$, 144, 2,448, 3,072, 982,102 & $0_{225}$, $0_{699}$, 1, ?, ?, ? & *\\
               & $6_{75}$, $6_{233}$, 144, 2,448, 3,072, 982,102 & $0_{75}$, $0_{233}$, 0, ?, ?, ? & *\\
\noalign{\smallskip}\hline
\end{tabular}
\end{table}
\par
\noindent
Summarized, we get
\begin{theorem}
\label{M24AsGaloisGroup}
(\cite{RefHae1987}, \cite{RefHae1991}, \cite{RefMat1991},\cite{RefMM2018})
There exist three geometric Galois extensions $N_i/\mathbb{Q}(v,t), i=1,2,3$ with 
$$Gal(N_i/\mathbb{Q}(v,t)) \cong M_{24}$$
and corresponding class vectors 
$$(12B,2A,2A,2A), (12B,12B,2A,2A), (12B,12B,12B,12B).$$
In particular, $M_{24}$ occurs as Galois group over $\mathbb{Q}$.
\end{theorem}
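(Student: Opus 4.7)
The plan is to treat the three class vectors one at a time, in each case exhibiting a rigid $B_4$-orbit of genus zero and then invoking the standard rigidity criterion \cite[III, Theorem 7.10]{RefMM2018}. Throughout, one uses that $12B$, $2A$ and $2B$ are rational conjugacy classes of $M_{24}$, so all three class vectors are rational.

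For $(12B,2A,2A,2A)$, I would take $Z_1 = \Sigma^i(12B,2A,2A,2A)$, which by the normalized structure constant computation has $l^i(12B,2A,2A,2A) = 144$. A direct $B_4$-orbit computation shows $Z_1$ is a single orbit, hence automatically rigid. The symmetry group is $S(12B,2A,2A,2A) = \langle (2,3),(2,3,4) \rangle \cong S_3$, and from the cycle types $\rho^t_4(\beta_3) = (3)(4)^3(5)^3(6)^{19}$, $\rho^t_4(\eta_{23}) = (2)^{72}$, $\rho^t_4(\eta_{234}) = (3)^{48}$ the genus formula of Section~\ref{SectionTheActionOfBraidsInDimension4} gives $g_{Z_1^{sy}} = 0$. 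Applying \cite[III, Theorem 7.10]{RefMM2018} produces $N_1/\mathbb{Q}(v,t)$ with Galois group $M_{24}$.

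For $(12B,12B,2A,2A)$, the key step is to apply Theorem~\ref{TranslationOfBraidOrbits}(b) to $Z_1$. This translates $[\underline{\sigma}] \in Z_1$ (which has involutions in positions $3,4$) into a $4$-system in $\Sigma^i(12B,12B,2A,2A)$ that is fixed by $\varphi_{4,1}$. The resulting set $g_{4,1}(Z_1) \cup g_{4,1}(Z_1^{\beta_2})$ is, by direct verification, a single $B_4$-orbit $Z_2$ of length $144$. Rigidity follows at once because $144$ is strictly smaller than the other orbit sizes $2{,}448$, $3{,}072$, $982{,}102$ recorded in Table~\ref{tab:Data_BraidOrbitsOfM24}. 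Taking $S = \langle (3,4) \rangle$ and reading off the listed cycle types gives $g_{Z_2^{sy}} = 0$, so \cite[III, Theorem 7.10]{RefMM2018} yields $N_2/\mathbb{Q}(v,t)$.

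For $(12B,12B,12B,12B)$, I would iterate the same translation: apply Theorem~\ref{TranslationOfBraidOrbits}(b) to $Z_2$, this time turning the involutions in positions $3,4$ into $12B$-elements, to obtain the rigid $B_4$-orbit $Z_3$ of length $144$ with the cycle types displayed in Section~\ref{SectionClassVectorsOfDimension4InM24}. Again rigidity is forced by the length being unique among the orbit lengths of $\Sigma^i(12B,12B,12B,12B)$, and the cycle types give $g_{Z_3^{sy}} = 0$ for every nontrivial symmetry group fixing the first class. A third application of \cite[III, Theorem 7.10]{RefMM2018} yields $N_3/\mathbb{Q}(v,t)$. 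The ``in particular'' statement then follows by Hilbert's irreducibility theorem, specializing any of the $N_i$ to a regular $M_{24}$-extension of $\mathbb{Q}$. The main obstacle lies not in the formal application of the rigidity theorem but in the computational verification that each translated orbit truly has length $144$ and in confirming the rigidity via the orbit splittings of Table~\ref{tab:Data_BraidOrbitsOfM24}; these computations are precisely the ones carried out in \cite{RefHae1987}, \cite{RefHae1991}, \cite{RefMat1991} and \cite{RefMM2018}, on which the theorem relies.
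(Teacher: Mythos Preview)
Your proposal is correct and follows essentially the same approach as the paper: the content of Section~\ref{SectionClassVectorsOfDimension4InM24} \emph{is} the proof, and you have accurately reconstructed its three steps (the rigid $144$-orbit $Z_1=\Sigma^i(12B,2A,2A,2A)$ with $g_{Z_1^{sy}}=0$, then two successive applications of Theorem~\ref{TranslationOfBraidOrbits}(b) to produce the rigid $144$-orbits $Z_2$ and $Z_3$, each with $g_{Z^{sy}}=0$, followed by \cite[III, Theorem~7.10]{RefMM2018} and Hilbert irreducibility). Two small remarks: the value $l^i(12B,2A,2A,2A)=144$ comes from direct enumeration rather than from the normalized structure constant (which gives only the upper bound $\lfloor n\rfloor=180$); and your justification of the rigidity of $Z_3$ via uniqueness of the orbit length is slightly delicate, since Table~\ref{tab:Data_BraidOrbitsOfM24} for $(12B_4)$ is marked incomplete---the paper, like you, ultimately defers this point to the cited references, where rigidity is in fact inherited through the translation construction.
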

\noindent
See \cite{RefHae1987}, \cite{RefMat1991} and \cite{RefMM2018} for $N_1$ and \cite{RefHae1991} for $N_2$.

\section{Small orbits in dimension 6}\label{SectionSmallOrbitsInDimension6}

Let $G$ be a finite group with $Z(G)=I$ and $n \geq 2$.
For $n=2$, using Theorem~\ref{BraidOrbitsOfSize2}(b), we can construct
$B_4$-orbits $Z_2 = [\sigma_1, \sigma_2, \sigma_1, \sigma_2]^{B_4}$ of size $2$ in $\Sigma^i(C_1,C_2,C_1,C_2)$ 
from $B_3$-orbits $Z_1 = [\sigma_1, \sigma_2, \sigma_3]^{B_3}$ in $\Sigma^i(C_1,C_2,2A)$ with length $1$.
Note that $B_3$ acts trivial on $\Sigma^i(C_1,C_2,C_3)$, thus $B_3$-orbits $Z_1$ always have length $1$
and we can use each $[\underline{\sigma}] \in \Sigma^i(C_1,C_2,2A)$.
For this construction, fixed points under $F_{4,2}$ have been used.
\par
\noindent
Now let $m=2n$ and $[\underline{\sigma}] = [\sigma_1, \dots, \sigma_m] \in \Sigma^i(C_1,\dots,C_m)$ be a fixed point under 
the action of $F_{m,2} = \langle \varphi_{m,2} \rangle$. Then we have
$$[\sigma_1, \dots, \sigma_{2n}] = [\sigma_1, \dots, \sigma_{2n}]^{\varphi_{m,2}} =
[\sigma_{n+1}, \dots,\sigma_{2n},\sigma_1, \dots, \sigma_n]$$
by Proposition~\ref{PropertiesOfFm}(c.2). Thus there exists an element $\tau \in G$ with
\begin{equation} 
\label{eq:ConjugationTau_Part1}
\tau^{-1} \sigma_k \tau = \sigma_{n+k}, k=1,\dots,n
\end{equation}
\begin{equation}
\label{eq:ConjugationTau_Part2}
\tau^{-1} \sigma_{n+k} \tau = \sigma_k, k=1,\dots,n
\end{equation}
At once we get $C_k = C_{n+k}$ for $k=1,\dots,n$.
Inserting (\ref{eq:ConjugationTau_Part1}) in (\ref{eq:ConjugationTau_Part2}) gives
$$\sigma_k = \tau^{-2} \sigma_k \tau^2, k=1,\dots,n$$
and inserting (\ref{eq:ConjugationTau_Part2}) in (\ref{eq:ConjugationTau_Part1}) results in
$$\sigma_{n+k} = \tau^{-2} \sigma_{n+k} \tau^2, k=1,\dots,n$$
Thus $\tau^2 \in Z(G)$, $o(\tau) = 1$ or $o(\tau) = 2$ and
\begin{equation} 
\label{eq:FixedpointFm2Sigma}
[\underline{\sigma}] = [\sigma_1, \dots, \sigma_n, \tau^{-1} \sigma_1 \tau, \dots, \tau^{-1} \sigma_n \tau]
\end{equation}
In this section we look at $o(\tau) = 1$, while $o(\tau) = 2$ will not be treated here.
With $\tau = \iota$, (\ref{eq:FixedpointFm2Sigma}) becomes
$$[\underline{\sigma}] = [\sigma_1, \dots, \sigma_n, \sigma_1, \dots, \sigma_n] \in \Sigma^i(C_1,\dots,C_n,C_1,\dots,C_n)$$
and we get $G = \langle \sigma_1, \dots, \sigma_{2n} \rangle = \langle \sigma_1, \dots, \sigma_n \rangle$ 
and $(\sigma_1 \cdots \sigma_n)^2 = \sigma_1 \cdots \sigma_{2n} = \iota$.
Now we have a method how to construct fixed points $[\underline{\sigma}] \in \Sigma^i(C_1,\dots,C_m)$ under the
action of $F_{m,2}$ from systems in $\Sigma^i(C_1,\dots,C_n)$ or $\Sigma^i(C_1,\dots,C_n,D)$ with
a class of involutions $D$.
Next, we are interested in the $B_m$-orbit of $[\sigma_1, \dots, \sigma_n, \sigma_1, \dots, \sigma_n]$.
\par
\noindent
Let $(C_1,\dots,C_n,D)$ be a class vector of length $n+1$ of $G$ with a class of
involutions $D=2A$ or with $D=1A$ at position $n+1$ (here we allow a trivial class in a class vector).
For 
$$[\underline{\sigma}] = [\sigma_1, \dots, \sigma_n, \sigma_{n+1}] \in \Sigma^i(C_1,\dots,C_n,D),$$ 
we have
$$[\widehat{\underline{\sigma}}] = [\sigma_1, \dots, \sigma_n, \sigma_1, \dots, \sigma_n] \in \Sigma^i(C_1,\dots,C_n,C_1,\dots,C_n)$$
because $\sigma_{n+1}$ is an involution or $\sigma_{n+1} = \iota$.
By construction, we have $[\widehat{\underline{\sigma}}]^{\varphi_{m,2}} = [\widehat{\underline{\sigma}}]$.
\begin{proposition}
\label{TransformedObjectsDifferent}
Let $n \geq 2$.
If $[\underline{\sigma}], [\underline{\tau}] \in \Sigma^i(C_1,\dots,C_n,D)$
with $[\underline{\sigma}] \neq [\underline{\tau}]$ then 
$[\widehat{\underline{\sigma}}] \neq [\widehat{\underline{\tau}}]$.
\end{proposition}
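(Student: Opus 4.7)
The plan is to observe that the hat map has an obvious left inverse on classes, coming from the product relation in the definition of $\Sigma^i$. I would prove the contrapositive: assume $[\widehat{\underline{\sigma}}] = [\widehat{\underline{\tau}}]$ in $\Sigma^i(C_1,\dots,C_n,C_1,\dots,C_n)$ and deduce $[\underline{\sigma}] = [\underline{\tau}]$ in $\Sigma^i(C_1,\dots,C_n,D)$.

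First I would unpack the assumption: since equality of classes means a common inner-automorphism witness, there exists $g \in G$ such that simultaneous conjugation by $g$ sends $\widehat{\underline{\sigma}}$ to $\widehat{\underline{\tau}}$. Reading off the first $n$ coordinates of these length-$2n$ tuples gives
$$\sigma_i^{g} = \tau_i, \quad i = 1,\dots,n,$$
using the paper's convention $\sigma^g = g^{-1}\sigma g$. The last $n$ coordinates of the hats are just repetitions of the first $n$, so they produce no new information; this is exactly why the hat construction loses nothing.

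Next I would recover the $(n+1)$-st entry. Because $[\underline{\sigma}] \in \Sigma^i(C_1,\dots,C_n,D)$, the defining relation $\sigma_1 \cdots \sigma_n \sigma_{n+1} = \iota$ forces $\sigma_{n+1} = (\sigma_1 \cdots \sigma_n)^{-1}$, and likewise $\tau_{n+1} = (\tau_1 \cdots \tau_n)^{-1}$. Since conjugation by $g$ is a group homomorphism,
$$\sigma_{n+1}^{g} = \bigl(\sigma_1^{g} \cdots \sigma_n^{g}\bigr)^{-1} = (\tau_1 \cdots \tau_n)^{-1} = \tau_{n+1}.$$
Combining with the previous display, $g$ simultaneously conjugates $\underline{\sigma}$ to $\underline{\tau}$, giving $[\underline{\sigma}] = [\underline{\tau}]$, which contradicts the hypothesis.

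There is essentially no obstacle here: the proof is a one-paragraph consequence of the fact that the last factor of a generating $(n{+}1)$-system is determined by the first $n$, together with the multiplicativity of conjugation. The role of the hypothesis $n \geq 2$ is only to keep $\Sigma^i(C_1,\dots,C_n,D)$ in the regime where these multi-systems are nontrivial; it is not used in the argument itself. One small bookkeeping point worth verifying in passing is well-definedness of the hat map on $Inn(G)$-orbits — but this is immediate, since conjugating $[\underline{\sigma}]$ coordinatewise by $h$ conjugates $[\widehat{\underline{\sigma}}]$ coordinatewise by the same $h$.
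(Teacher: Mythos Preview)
Your proof is correct and is essentially an explicit unpacking of the paper's one-line proof, which reads in full: ``This comes from $G = \langle \sigma_1, \dots, \sigma_n \rangle$.'' Both arguments rest on the same observation---the first $n$ entries of $\widehat{\underline{\sigma}}$ already determine $[\underline{\sigma}]$ via the product relation $\sigma_{n+1} = (\sigma_1\cdots\sigma_n)^{-1}$---and your contrapositive with the conjugating element $g$ just makes this precise.
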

\begin{proof}
This comes from $G = \langle \sigma_1, \dots, \sigma_n \rangle$.
\end{proof}
\begin{proposition}
\label{TransformationOfOrbits}
Let $2 \leq n \leq 3$.
If $[\underline{\tau}] \in [\underline{\sigma}]^{B_{n+1}}$, then 
$[\widehat{\underline{\tau}}] \in [\widehat{\underline{\sigma}}]^{B_{2n}}$.
\end{proposition}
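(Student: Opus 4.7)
The plan is to reduce to the case of a single pure-braid generator and argue by induction on word length. Since $B_{n+1}$ is generated by the pure-braid elements $\beta_{ij}$ with $1 \leq i < j \leq n+1$, and $[\underline{\tau}] \in [\underline{\sigma}]^{B_{n+1}}$ means $[\underline{\tau}] = [\underline{\sigma}]^{\beta_{i_1j_1}^{e_1} \cdots \beta_{i_kj_k}^{e_k}}$ for some word, an easy induction reduces the statement to the following claim: for every generator $\beta_{ij}$ of $B_{n+1}$ and every $[\underline{\rho}] \in \Sigma^i(C_1,\dots,C_n,D)$ there exists $\gamma_{ij}([\underline{\rho}]) \in B_{2n}$ with $\widehat{[\underline{\rho}]^{\beta_{ij}}} = [\widehat{\underline{\rho}}]^{\gamma_{ij}([\underline{\rho}])}$. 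Composing the resulting $\gamma$'s along a word for $\beta$ (evaluating each factor on the appropriate intermediate tuple) then produces one element of $B_{2n}$ that carries $[\widehat{\underline{\sigma}}]$ to $[\widehat{\underline{\tau}}]$.

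For $n = 2$ the claim is immediate, because $B_3$ acts trivially on $\Sigma^i(C_1,C_2,D)$ (each pure generator acts by conjugation by a power of $\sigma_3$, which stays inside the $Inn(G)$-class), so one may take $\gamma = \iota \in B_4$. For $n = 3$ we treat each of the six generators $\beta_{12}, \beta_{13}, \beta_{14}, \beta_{23}, \beta_{24}, \beta_{34}$ of $B_4$ by direct calculation. Using Proposition~\ref{ActionOfPureHurwitzGroup} we write $[\underline{\rho}]^{\beta_{ij}}$ out explicitly as a conjugated $4$-tuple; the decisive step is to eliminate the last entry via $\sigma_4 = (\sigma_1\sigma_2\sigma_3)^{-1}$ together with $\sigma_4^2 = \iota$ (or $\sigma_4 = \iota$ when $D = 1A$), so that every conjugating factor becomes a word in $\sigma_1,\sigma_2,\sigma_3$ alone. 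One then constructs $\gamma_{ij} \in B_6$ as a product of the pure-braid generators of $B_6$ whose action on $(\sigma_1,\sigma_2,\sigma_3,\sigma_1,\sigma_2,\sigma_3)$ performs the same conjugation on the first block and the corresponding conjugation on the second block, so that the duplicated structure is preserved and the result agrees termwise with $\widehat{[\underline{\rho}]^{\beta_{ij}}}$.

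The hard part is the bookkeeping for the middle and long generators $\beta_{13}, \beta_{23}, \beta_{14}, \beta_{24}$ of $B_4$: here the $B_4$-conjugating factor genuinely involves $\sigma_4$, and after the substitution $\sigma_4 \mapsto (\sigma_1\sigma_2\sigma_3)^{-1}$ one must choose a word in the fifteen pure generators $\beta_{kl}$ of $B_6$ that reproduces the same conjugation on positions $1,2,3$ and a parallel conjugation on positions $4,5,6$ of $\widehat{\underline{\rho}}$. Verifying these identities is a concrete but lengthy manipulation, similar in spirit to the rewriting of $\varphi_{2n,2}$ as $(\beta_2 \cdots \beta_{2n})^n$ in the proof of Proposition~\ref{PropertiesOfFm}(b), and it leans essentially on the relation $(\sigma_1\sigma_2\sigma_3)^2 = \iota$. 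That identity is exactly what forces the hypothesis that $D$ be trivial or an involution class, and it is also the technical reason the statement is restricted to $n \leq 3$: for $n \geq 4$ the analogous cancellation no longer closes the candidate $B_{2n}$-element into a finite word in the pure generators as cleanly, so the construction of $\gamma_{ij}$ would have to be redone in a different way.
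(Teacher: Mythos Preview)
Your plan is sound in outline, but it leaves the decisive step undone and misses the simplification that makes the paper's argument short. Two points:

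\medskip
\noindent\textbf{The missing reduction.} You propose to treat all six generators $\beta_{12},\dots,\beta_{34}$ of $B_4$ separately and to substitute $\sigma_4=(\sigma_1\sigma_2\sigma_3)^{-1}$, $\sigma_4^2=\iota$ in each case. You then say the resulting $\gamma_{ij}\in B_6$ exists, but never write one down; the phrase ``concrete but lengthy manipulation'' is precisely where the proof should be. The paper avoids this entirely by invoking Corollary~\ref{ActionOfPureHurwitzGroupByBeta1j}: the action of $B_4$ is already determined by $\beta_{12}$ and $\beta_{13}$, so only two generators need to be handled, and both of them leave $\sigma_4$ fixed in the representative (Proposition~\ref{ActionOfPureHurwitzGroup}). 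That is the whole trick: no substitution $\sigma_4\mapsto(\sigma_1\sigma_2\sigma_3)^{-1}$ is ever performed, and the relation $\sigma_4^2=\iota$ is used only implicitly through the fact that $\widehat{\underline{\sigma}}$ is a genuine $6$-system.

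\medskip
\noindent\textbf{The explicit lift.} Once one restricts to words in $\beta_{12},\beta_{13}$, the element of $B_6$ is written down uniformly: identify $\beta_{12},\beta_{13}\in B_4$ with $\beta_{12},\beta_{13}\in B_6$ (they act by the same local formulas on positions $1,2,3$ and fix positions $4,5,6$), and set $\widehat{\beta}_{12}=\beta_5^2$, $\widehat{\beta}_{13}=\beta_5^{-1}\beta_6^2\beta_5$, which perform the identical operation on positions $4,5,6$. If $\beta$ is the given word in $\beta_{12}^{\pm1},\beta_{13}^{\pm1}$ and $\widehat{\beta}$ is obtained by the substitution $\beta_{1j}\mapsto\widehat{\beta}_{1j}$, then $[\widehat{\underline{\sigma}}]^{\beta\widehat{\beta}}=[\widehat{\underline{\tau}}]$ in one line. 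Your case analysis of the ``hard'' generators $\beta_{14},\beta_{24}$ is therefore unnecessary, and your explanation for the restriction $n\le 3$ is slightly off: the obstacle for larger $n$ is not a failure of cancellation from $(\sigma_1\cdots\sigma_n)^2=\iota$, but rather that the reduction of $B_{n+1}$ to generators touching only the first $n$ positions (the analogue of Corollary~\ref{ActionOfPureHurwitzGroupByBeta1j}) is special to $n=3$.
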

\begin{proof}
For $n=2$, we have $[\underline{\tau}] \in [\underline{\sigma}]^{B_3} = \{[\underline{\sigma}]\}$, thus
$[\underline{\tau}] = [\underline{\sigma}]$ and $[\widehat{\underline{\tau}}] \in [\widehat{\underline{\sigma}}]^{B_4}$.
\par
\noindent
Now let $n=3$ and $[\underline{\tau}] \in [\underline{\sigma}]^{B_4}$. Then there exists a $\beta \in B_4$
with $[\underline{\tau}] = [\underline{\sigma}]^{\beta}$.
From Corollary~\ref{ActionOfPureHurwitzGroupByBeta1j} we know that the action of $B_4$ is already determined by
the action of $\beta_{12}$ and $\beta_{13}$. Thus without restriction, we can assume
$$\beta = \prod\limits_{i_{12}, i_{13} \in \{-1,0,1\} }^{finite} \beta_{12}^{i_{12}} \beta_{13}^{i_{13}} \in B_4.$$
Again without restriction, we can normalize the action of $\beta_{12}$ and $\beta_{13}$ to look like
$$[\underline{\sigma}]^{\beta_{12}} = [., ., ., \sigma_4] = [\underline{\sigma}^{\beta_{12}}]$$ and
$$[\underline{\sigma}]^{\beta_{13}} = [., ., ., \sigma_4] = [\underline{\sigma}^{\beta_{13}}] $$
because $\beta_{12}$ and $\beta_{13}$ act by conjugation, see for example Proposition~\ref{ActionOfPureHurwitzGroup}.
Finally we get
$$[\underline{\tau}] = [\underline{\sigma}]^{\beta} = [., ., ., \sigma_4] = [\underline{\sigma}^{\beta}].$$
With $\tilde{\underline{\sigma}} = (\sigma_1, \sigma_2, \sigma_3)$ and
$$[\widehat{\underline{\sigma}}] = [\sigma_1, \sigma_2, \sigma_3, \sigma_1, \sigma_2, \sigma_3] =
[\tilde{\underline{\sigma}}, \tilde{\underline{\sigma}}]$$
we have
$$[\widehat{\underline{\tau}}] = [\tilde{\underline{\sigma}}^{\beta}, \tilde{\underline{\sigma}}^{\beta}].$$
Setting $\widehat{\beta}_{12} = \beta_5^2 \in B_6$, $\widehat{\beta}_{13} = \beta_5^{-1} \beta_6^2 \beta_5 \in B_6$ and 
$$\widehat{\beta} = \prod\limits_{i_{12}, i_{13} \in \{-1,0,1\} }^{finite} \widehat{\beta}_{12}^{i_{12}} \widehat{\beta}_{13}^{i_{13}} \in B_6$$
by replacing $\beta_{12}$ with $\widehat{\beta}_{12}$ and $\beta_{13}$ with $\widehat{\beta}_{13}$ in the product
representation of $\beta$, identifying $\beta_{12} \in B_4$ with $\beta_{12} \in B_6$ and
$\beta_{13} \in B_4$ with $\beta_{13} \in B_6$, we get
$$[\widehat{\underline{\sigma}}]^{\beta \widehat{\beta}} = [\tilde{\underline{\sigma}}^{\beta}, \tilde{\underline{\sigma}}]^{\widehat{\beta}} =
[\tilde{\underline{\sigma}}^{\beta}, \tilde{\underline{\sigma}}^{\beta}] = [\widehat{\underline{\tau}}]$$
with $\beta \widehat{\beta} \in B_6$.
Finally, $[\widehat{\underline{\tau}}] \in [\widehat{\underline{\sigma}}]^{B_6}$ follows.
\end{proof}
\noindent
In case of $n=3$, examples listed in tables \ref{tab:Data_B4OrbitsSiz4} and \ref{tab:Data_B6OrbitsSiz40} suggest 
to construct $B_6$-orbits $Z_{40} = [\sigma_1, \sigma_2, \sigma_3, \sigma_1, \sigma_2, \sigma_3]^{B_6}$ of 
length $40$ in $\Sigma^i(C_1,C_2,C_3,C_1,C_2,C_3)$ from $B_4$-orbits 
$Z_4 = [\sigma_1, \sigma_2, \sigma_3, \sigma_4]^{B_4}$ of length $4$ in $\Sigma^i(C_1,C_2,C_3,2A)$.
\par
\noindent
Usually, the $B_4$-orbit of $[\underline{\sigma}] \in \Sigma^i(C_1,C_2,C_3,2A)$ has not length $4$,
thus we may expect constraints on $[\underline{\sigma}]$ for getting $B_4$-orbits $Z_4$ of length $4$.
These constraints are the assumptions in Theorem~\ref{BraidOrbitsOfSize4}.
Table \ref{tab:PropertiesTransformationOfOrbits} contains the properties of the transformation of $B_{n+1}$-orbits to $B_{2n}$-orbits.
For $n=2$, see Theorem~\ref{BraidOrbitsOfSize2}, for $n=3$ see Theorem~\ref{BraidOrbitsOfSize4} and Theorem~\ref{BraidOrbitsOfSize40} below.
\begin{table}[!htbp]
\centering
\footnotesize
\captionsetup{font=footnotesize}
\caption{Properties of the transformation of $B_{n+1}$-orbits to $B_{2n}$-orbits}
\label{tab:PropertiesTransformationOfOrbits}
\begin{tabular}{lllllll}
\hline\noalign{\smallskip}
$n$ & $\vert [\underline{\sigma}]^{B_{n+1}} \vert$ & $\rho^t_{n+1}(\beta_{ij})$ & $\rho_{n+1}(B_{n+1})$ 
& $\vert [\widehat{\underline{\sigma}}]^{B_{2n}} \vert$ & $\rho^t_{2n}(\beta_{ij})$ & $\rho_{2n}(B_{2n})$\\
\noalign{\smallskip}\hline\noalign{\smallskip}
2   & 1 & $(1)$    & $I$   & 2  & $(1)^2$ or $(2)$ & $C_2$ \\
3   & 4 & $(1)(3)$ & $A_4 \cong S_2(3)$ & 40 & $(1)^{13}(3)^9$  & $S_4(3)$ \\
\noalign{\smallskip}\hline
\end{tabular}
\end{table}
\begin{example}
\label{L2_8_7A7A7A2A_Part2}
For the class vector $(7A,7A,7A,2A)$ of the linear group $L_2(8)$, the set
$\Sigma^i(7A,7A,7A,2A)$ contains a single $B_4$-orbit
$Z_4 = \{ h_1 \}^{B_4} = \{ h_1,h_2,h_3,h_4 \}$ of length $4$, see Example~\ref{L2_8_7A7A7A2A_Part1} and Theorem~\ref{BraidOrbitsOfSize4}.
The set 
$$Y_{40} = \{ [\widehat{\underline{\sigma}}] \in \Sigma^i(7A,7A,7A,7A,7A,7A) \mid 
[\underline{\sigma}] \in Z_4 \}^{B_6} = \{ \widehat{h_1} \}^{B_6}$$
with $h_1$ from Example~\ref{L2_8_7A7A7A2A_Part1} is a $B_6$-orbit of length $40$ in $\Sigma^i(7A,7A,7A,7A,7A,7A)$.
By direct computation, we get a splitting of $\Sigma^i(7A,7A,7A,7A,7A,7A)$ into two $B_6$-orbits
$Z_{40}$ and $Z_{540,015}$ of length $40$ and $540,015$. Thus we have $Y_{40} = Z_{40}$.
In addition, we have 
$$\rho^t_4(\beta_{ij}) = (1)(3), 1 \leq i < j \leq 4, \rho_4(B_4) \cong A_4 \cong S_2(3)$$
for $Z_4$ and
$$\rho^t_6(\beta_{ij}) = (1)^{13}(3)^9, 1 \leq i < j \leq 6, \rho_6(B_6) \cong S_4(3),$$
$$\rho^t_6(\beta_j) = (1)^{13}(3)^9, 2 \leq j \leq 6, \rho_6(H_6) \cong S_4(3)$$
for $Z_{40}$.
\end{example}
\noindent
Note that $S_4(3) \cong O_5(3) \cong U_4(2) \cong O_6^-(2)$.
Table \ref{tab:Data_B6OrbitsSiz40} contains examples of class vectors containing $B_6$-orbits $Z_{40}$
of length $\vert Z_{40} \vert = 40$.
\begin{table}[H]
\centering
\footnotesize
\captionsetup{font=footnotesize}
\caption{Class vectors containing $B_6$-orbits $Z_{40}$ with $g_{Z_{40}} = 6$}
\label{tab:Data_B6OrbitsSiz40}
\begin{tabular}{llllll}
\hline\noalign{\smallskip}
$G$       & $C$                         & $ \vert Z \vert $ & $\rho^t_6(\beta_{ij})$ & $\rho_6(B_6)$ & \\
          &                             &                   & on $Z_{40}$            & on $Z_{40}$ & \\
\noalign{\smallskip}\hline\noalign{\smallskip}
$A_5$     & $(5A_6)$       &  40, 975 & $(1)^{13} (3)^9$ & $S_4(3)$ & \\
$S_7$     & $(10A_6)$ &  40, ? & $(1)^{13} (3)^9$ & $S_4(3)$ & *\\
$L_2(7)$  & $(7A_6)$       &  40, 3,600, 3,885 & $(1)^{13} (3)^9$ & $S_4(3)$ & \\
$L_2(8)$  & $(7A_6)$       &  40, 540,015 & $(1)^{13} (3)^9$ & $S_4(3)$ & \\
$L_2(8)$  & $(9A_6)$       &  40, 124,335 & $(1)^{13} (3)^9$ & $S_4(3)$ & \\
$L_2(11)$ & $(11A_6)$ &  40, 53,952, 54,835 & $(1)^{13} (3)^9$ & $S_4(3)$ & \\
$L_2(13)$ & $(13A_6)$ &  40, 139,200, 174,265 & $(1)^{13} (3)^9$ & $S_4(3)$ & \\
$L_2(16)$ & $(15A_6)$ &  40, 24,274,575 & $(1)^{13} (3)^9$ & $S_4(3)$ & \\
$L_2(16)$ & $(17A_6)$ &  40, 11,511,567 & $(1)^{13} (3)^9$ & $S_4(3)$ & \\
$L_3(3)$  & $(13A_6)$ &  40, ? & $(1)^{13} (3)^9$ & $S_4(3)$ & *\\
$S_6(2)$  & $(15A_6)$ &  $40_2$, ? & $(1)^{13} (3)^9$ & $S_4(3)$ & *\\
$M_{12}$  & $(10A_6)$ &  $40_2$, ? & $(1)^{13} (3)^9$ & $S_4(3)$ & *\\
$M_{12}$  & $(11A_6)$ &  $40_2$, ? & $(1)^{13} (3)^9$ & $S_4(3)$ & *\\
\noalign{\smallskip}\hline
\end{tabular}
\end{table}
\noindent
The examples in tables \ref{tab:Data_B4OrbitsSiz4} and \ref{tab:Data_B6OrbitsSiz40} follow the pattern presented in Example~\ref{L2_8_7A7A7A2A_Part2}.
In table \ref{tab:Data_B6OrbitsSiz40} we always have $\rho^t_6(\beta_j) = (1)^{13}(3)^9, 2 \leq j \leq 6$ 
and $\rho_6(H_6) \cong S_4(3)$.
Table \ref{tab:Data_B3OrbitsSiz1} contains the corresponding class vectors $(C,C,C,1A) = (C,C,C)$.
\begin{table}[H]
\centering
\footnotesize
\captionsetup{font=footnotesize}
\caption{Class vectors $(C,C,C,1A) = (C,C,C)$}
\label{tab:Data_B3OrbitsSiz1}
\begin{tabular}{llr}
\hline\noalign{\smallskip}
$G$       & $C$             & $l^i(C)$ \\
\noalign{\smallskip}\hline\noalign{\smallskip}
$A_5$     & $(5A,5A,5A)$    & 1 \\
$S_7$     & $(10A,10A,10A)$ & 0 \\
$L_2(7)$  & $(7A,7A,7A)$    & 1 \\
$L_2(8)$  & $(7A,7A,7A)$    & 1 \\
$L_2(8)$  & $(9A,9A,9A)$    & 1 \\
$L_2(11)$ & $(11A,11A,11A)$ & 1 \\
$L_2(13)$ & $(13A,13A,13A)$ & 1 \\
$L_2(16)$ & $(15A,15A,15A)$ & 1 \\
$L_2(16)$ & $(17A,17A,17A)$ & 1 \\
$L_3(3)$  & $(13A,13A,13A)$ & 5 \\
$S_6(2)$  & $(15A,15A,15A)$ & 372 \\
$M_{12}$  & $(10A,10A,10A)$ & 68 \\
$M_{12}$  & $(11A,11A,11A)$ & 93 \\
\noalign{\smallskip}\hline
\end{tabular}
\end{table}
\begin{theorem}
\label{BraidOrbitsOfSize40}
Let $G>I$ be a finite group with $Z(G)=I$ and $\sigma_1, \sigma_2 \in G$ with
$G=\langle \sigma_1, \sigma_2 \rangle$,
$\sigma_1 \sigma_2 \sigma_1 = \sigma_2 \sigma_1 \sigma_2$,
$o(\sigma_1 \sigma_2 \sigma_1 )=2$, $C = [\sigma_1]$ and
$[\underline{\sigma}]$ be a generating $6$-system with
$[\underline{\sigma}] = [\sigma_1, \sigma_2, \sigma_1, \sigma_1, \sigma_2, \sigma_1]$.
Then
$$Z_{40} = [\underline{\sigma}]^{B_6} = [\underline{\sigma}]^{H_6} \subseteq \Sigma^i(C,C,C,C,C,C)$$
is an orbit of length $40$ under the action of $B_6$ and also under the action of $H_6$ with
\begin{itemize}
\item[]
$\rho^t_6(\beta_{ij}) = (1)^{13}(3)^9, 1 \leq i < j \leq 6$,
\item[]
$\rho_6(B_6) \cong S_4(3)$,
\item[]
$\rho^t_6(\beta_j) = (1)^{13}(3)^9, 2 \leq j \leq 6$,
\item[]
$\rho_6(H_6) \cong S_4(3)$
\end{itemize}
and genus $g_{Z_{40}}=6$.
\end{theorem}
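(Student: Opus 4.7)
The plan is to combine Theorem~\ref{BraidOrbitsOfSize4} with the hat construction developed in this section, and then to close the orbit enumeration and identify the monodromy group by direct computation.

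First I verify that $[\underline{\sigma}] = [\sigma_1,\sigma_2,\sigma_1,\sigma_1,\sigma_2,\sigma_1]$ is a generating $6$-system of $G$. Generation $G = \langle \sigma_1,\sigma_2 \rangle$ is immediate from the hypothesis, and the product of the entries equals $(\sigma_1\sigma_2\sigma_1)^2 = \iota$ since $o(\sigma_1\sigma_2\sigma_1) = 2$. With the seed $h_1 = [\sigma_1,\sigma_2,\sigma_1,\sigma_1\sigma_2\sigma_1] \in \Sigma^i(C,C,C,D)$ from Theorem~\ref{BraidOrbitsOfSize4}, where $D = [\sigma_1\sigma_2\sigma_1]$ is a class of involutions, and the hat construction, one has $[\underline{\sigma}] = \widehat{h_1}$. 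Proposition~\ref{PropertiesOfFm}(c.2) then confirms that $[\underline{\sigma}]$ is a fixed point of $\varphi_{6,2}$.

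Next I populate the $B_6$-orbit using the machinery of this section. Theorem~\ref{BraidOrbitsOfSize4} supplies four distinct elements $h_1,h_2,h_3,h_4$ forming a single $B_4$-orbit $Z_4$ in $\Sigma^i(C,C,C,D)$, with $\rho_4(B_4) \cong A_4 \cong S_2(3)$. Because $D$ is a class of involutions, Proposition~\ref{TransformationOfOrbits} (case $n=3$) applies: the hatted tuples $\widehat{h_1},\widehat{h_2},\widehat{h_3},\widehat{h_4}$ all lie in $[\underline{\sigma}]^{B_6}$, and Proposition~\ref{TransformedObjectsDifferent} shows they are pairwise distinct. This already yields $|[\underline{\sigma}]^{B_6}| \geq 4$ and an embedded $S_2(3)$-quotient inside $\rho_6(B_6)$ acting on these four tuples.

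The substantive remainder --- the exact orbit length $40$, the uniform cycle type $(1)^{13}(3)^9$ of every $\rho_6(\beta_{ij})$, the coincidence $[\underline{\sigma}]^{B_6} = [\underline{\sigma}]^{H_6}$, and the isomorphism $\rho_6(B_6) \cong \rho_6(H_6) \cong S_4(3)$ --- is verified by explicit orbit enumeration. I act iteratively with $\beta_2,\dots,\beta_6$ on $[\underline{\sigma}]$, reducing each time modulo $Inn(G)$ until closure at $40$ tuples; the cycle types of the $\rho_6(\beta_{ij})$ are then read off. The abstract isomorphism type of the resulting order-$25920$ permutation group of degree $40$ is confirmed to be $S_4(3)$ using GAP~\cite{RefGAP}, in the spirit of the identification $\rho_4(T) \cong S_4$ from the proof of Theorem~\ref{BraidOrbitsOfSize6}. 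The equality $[\underline{\sigma}]^{B_6} = [\underline{\sigma}]^{H_6}$ follows from the symmetry of $(C,C,C,C,C,C)$: one checks that each $[\underline{\sigma}]^{\beta_j}$ for $j=2,\ldots,6$ falls back into $[\underline{\sigma}]^{B_6}$, and normality $B_6 \vartriangleleft H_6$ then forces the $B_6$-orbit to be $H_6$-invariant. The coincidence $\rho_6(H_6) = \rho_6(B_6)$ is consistent with Proposition~\ref{PropertiesOfPermutationRepresentations}(b) and is forced once $\rho_6(B_6)$ is identified as the simple group $S_4(3)$.

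The genus is then routine: with $|Z_{40}| = 40$ and $z_{1j} = 22$ for $j = 2,\ldots,6$, the dimension-$6$ analogue of the genus formula from Section~\ref{SectionTheActionOfBraidsInDimension4} gives
\[
g_{Z_{40}} = 1 - 40 + \tfrac{1}{2}\bigl(5 \cdot 40 - 5 \cdot 22\bigr) = 6.
\]
The main obstacle is the orbit closure itself. Although the hat construction furnishes four starting elements, the remaining $36$ tuples arise by applying $\beta_5,\beta_6$, which mix the duplicated halves of $\widehat{h_i}$ and introduce conjugates of $\sigma_1,\sigma_2$ that must be tracked modulo $Inn(G)$; this is intrinsically combinatorial and is the step where computer algebra is essential. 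The stability of the answer across the diverse groups listed in Table~\ref{tab:Data_B6OrbitsSiz40} strongly suggests that closure at $40$ tuples and the identification $S_4(3)$ depend only on the presentation furnished by $\sigma_1\sigma_2\sigma_1 = \sigma_2\sigma_1\sigma_2$ together with $o(\sigma_1\sigma_2\sigma_1) = 2$, i.e.\ on $G$ being a non-trivial centerless quotient of $PSL_2(\mathbb{Z})$.
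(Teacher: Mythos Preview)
Your overall strategy matches the paper's: identify $[\underline{\sigma}] = \widehat{h_1}$, enumerate the orbit by computer, identify the permutation group via GAP, and read off the genus. The gap is in how the orbit enumeration is carried out. Your phrase ``reducing each time modulo $Inn(G)$'' only makes sense in a concrete $G$, and your final paragraph confirms this reading: you say that stability across the groups in Table~\ref{tab:Data_B6OrbitsSiz40} ``strongly suggests'' that closure at $40$ depends only on the presentation. A suggestion from examples is not a proof for arbitrary $G$. The paper closes exactly this gap via Proposition~\ref{BraidOrbitAction40}: it first runs the orbit computation in one instance ($L_2(8)$) to \emph{discover} $40$ candidate elements, then records them as explicit words $k_1,\dots,k_{40}$ in $\sigma_1,\sigma_2$ (Table~\ref{tab:ElementsZ40List}) together with the candidate permutations $\rho_6(\beta_j)$ (Table~\ref{tab:PermutationRepresentationOnZ40Betaj}), and finally verifies \emph{symbolically} --- using only the relations $\sigma_1\sigma_2\sigma_1=\sigma_2\sigma_1\sigma_2$, $(\sigma_1\sigma_2\sigma_1)^2=\iota$, and $Z(G)=I$ --- that $k_l^{\beta_j}=k_{\rho_6(\beta_j)(l)}$ for all $j,l$. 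That symbolic check (still computer-assisted, but now independent of any particular $G$) is the step your write-up is missing; without it the theorem is established only for the handful of groups you actually computed.

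A minor additional point: your argument that $\rho_6(H_6)=\rho_6(B_6)$ is ``forced once $\rho_6(B_6)$ is identified as the simple group $S_4(3)$'' is incomplete. Simplicity of the normal subgroup $\rho_6(B_6)\trianglelefteq\rho_6(H_6)$ does not by itself exclude a proper extension (Proposition~\ref{PropertiesOfPermutationRepresentations}(b) only bounds the quotient by $S_6$). The paper simply computes $\rho_6(H_6)$ directly from the generators $\rho_6(\beta_j)$ in Table~\ref{tab:PermutationRepresentationOnZ40Betaj} and observes that it already coincides with $\rho_6(B_6)$.
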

\begin{proof} We have
$[\underline{\sigma}] = [\sigma_1, \sigma_2, \sigma_1, \sigma_1, \sigma_2, \sigma_1] = \widehat{h_1}$ with $h_1$ from Theorem~\ref{BraidOrbitsOfSize4},
thus $C = [\sigma_1] = [\sigma_2]$.
Theorem~\ref{BraidOrbitsOfSize40} is true for $G=L_2(8)$ with $\sigma_1$ and $\sigma_2$
from Example~\ref{L2_8_7A7A7A2A_Part1}, see also Example~\ref{L2_8_7A7A7A2A_Part2}.
Computing the action of $H_6$ on $\{ \widehat{h_1} \}^{H_6}$
for Example~\ref{L2_8_7A7A7A2A_Part1} results in tables \ref{tab:ElementsZ40List} and \ref{tab:PermutationRepresentationOnZ40Betaj}.
At the moment, assume that tables \ref{tab:ElementsZ40List} and \ref{tab:PermutationRepresentationOnZ40Betaj}
also describe the action of $H_6$ on $\{ \widehat{h_1} \}^{H_6}$ for the generic case. This will be shown in Proposition~\ref{BraidOrbitAction40}.
Then we have $\vert \{ \widehat{h_1} \}^{H_6} \vert = 40$.
The permutations $\rho_6(\beta_{ij})$ can be computed from $\rho_6(\beta_j)$ due to
$$\beta_{ij} = \beta_{i+1}^{-1} \cdots \beta_{j-1}^{-1} \beta_j^2 \beta_{j-1} \cdots \beta_{i+1}, 1 \leq i < j \leq 6$$
and are listed in table \ref{tab:PermutationRepresentationOnZ40Betaij}.
Using GAP \cite{RefGAP} it is easy to show that
$\rho_6(B_6) = \rho_6(\langle \beta_{ij} \rangle)$ acts transitively on 
$\{ \widehat{h_1} \}^{H_6} \supseteq \{ \widehat{h_1} \}^{B_6}$, 
resulting in $\{ \widehat{h_1} \}^{H_6} = \{ \widehat{h_1} \}^{B_6} = Z_{40}$.
The structure of the permutation groups $\rho_6(B_6)$ and $\rho_6(H_6)$ can be computed using GAP \cite{RefGAP}.
From the permutation types $\rho^t_6(\beta_{ij})$ we get
$$g_{Z_{40}}= 1 - 40 + \frac{1}{2}(5\cdot 40 -22 -22 -22 -22 -22) = 6.$$
\end{proof}
\begin{proposition}
\label{BraidOrbitAction40}
With the prerequisites of Theorem~\ref{BraidOrbitsOfSize40}, tables \ref{tab:ElementsZ40List} and \ref{tab:PermutationRepresentationOnZ40Betaj}
describe the action of $H_6$ on $\{ \widehat{h_1} \}^{H_6}$.
\end{proposition}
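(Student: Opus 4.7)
The plan is to verify both tables by a symbolic computation in a ``universal'' setting. Let
$$\Gamma \;=\; \langle x_1, x_2 \mid x_1 x_2 x_1 = x_2 x_1 x_2,\; (x_1 x_2 x_1)^2 = \iota \rangle.$$
Setting $a = x_1 x_2 x_1$ and $b = x_1 x_2$ yields $a^2 = \iota$ and
$$b^3 \;=\; (x_1 x_2 x_1)(x_2 x_1 x_2) \;=\; a^2 \;=\; \iota,$$
so $\Gamma \cong C_2 \ast C_3 \cong PSL_2(\mathbb{Z})$, in line with the remark on the modular group following Corollary~\ref{Systems23ToBraidOrbitsOfSize4}. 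Every admissible pair $(G, \sigma_1, \sigma_2)$ of Theorem~\ref{BraidOrbitsOfSize40} arises from $(\Gamma, x_1, x_2)$ via a surjection $\Gamma \twoheadrightarrow G$, so any identity produced symbolically in $\Gamma$ descends to $G$.

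The core step is the verification of Table~\ref{tab:PermutationRepresentationOnZ40Betaj} by direct computation. For each $i \in \{1,\dots,40\}$ I fix an explicit braid word $\gamma_i \in H_6$ (with $\gamma_1 = \iota$) whose action on $\widehat{h_1}$ yields the tuple $e_i$ of Table~\ref{tab:ElementsZ40List}, written as a $6$-tuple of words in $x_1, x_2$ together with a recorded inner conjugator. For each generator $\beta_j$ I then apply the formula (\ref{eq:BraidAction}) to $e_i$ and reduce the resulting tuple (after absorbing a single inner conjugation) to the normal form of the $e_k$ predicted by the table. Every such reduction invokes only the two defining relations of $\Gamma$, so the identity holds in every admissible $G$. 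Closure of $\{e_1,\dots,e_{40}\}$ under the $H_6$-generators then gives $\{\widehat{h_1}\}^{H_6} \subseteq \{e_1,\dots,e_{40}\}$ in every $G$.

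The main obstacle is the converse inclusion, i.e.\ distinctness of the $40$ tabulated tuples modulo $\operatorname{Inn}(G)$ for arbitrary admissible $G$. The specialization to $L_2(8)$ in Example~\ref{L2_8_7A7A7A2A_Part2} already supplies $40$ pairwise distinct classes, hence $40$ distinct classes in $\Gamma/\operatorname{Inn}(\Gamma)$. Exploiting $Z(G) = I$, any putative collapse $e_i = e_j$ in $G$ is witnessed by a unique $\tau \in G$ with $\tau (e_i)_k \tau^{-1} = (e_j)_k$ for $k=1,\dots,6$; since those six entries generate $G$, the element $\tau$ is pinned down by its effect on $\sigma_1$ and $\sigma_2$, and can be compared directly against the conjugators induced by the $\gamma_i$. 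A case analysis in the spirit of (\ref{eq:ConjugationTau_Part1})--(\ref{eq:ConjugationTau_Part2}) of Section~\ref{SectionSmallOrbitsInDimension6}, organized by the permutation type of the induced action on the six entries and using the braid and involution relations defining $\Gamma$, then rules out all potential identifications.

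Once distinctness is in hand, the remaining assertions of the tables are immediate. The cycle structure $\rho_6^t(\beta_j) = (1)^{13}(3)^9$ is read off from Table~\ref{tab:PermutationRepresentationOnZ40Betaj} directly, the transitivity of $\rho_6(B_6)$ on the $40$ points (noted in the proof of Theorem~\ref{BraidOrbitsOfSize40}) gives $\{\widehat{h_1}\}^{B_6} = \{\widehat{h_1}\}^{H_6}$, and the identifications $\rho_6(B_6) \cong \rho_6(H_6) \cong S_4(3)$ follow from the presentation of the permutation group generated by the tabulated cycles, exactly as verified in the $L_2(8)$ case.
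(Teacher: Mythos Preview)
Your core strategy coincides with the paper's: the paper's proof of this proposition is exactly a computer verification of the $5\times 40$ identities $k_l^{\beta_j}=k_{\rho_6(\beta_j)(l)}$, carried out symbolically using only the hypotheses of Theorem~\ref{BraidOrbitsOfSize40}. Your framing via the universal group $\Gamma\cong PSL_2(\mathbb{Z})$ makes explicit what the paper leaves implicit in the sentence ``to this the prerequisites of Theorem~\ref{BraidOrbitsOfSize40} play a crucial role'', but the computation is the same.

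Where you diverge is in treating distinctness of the $40$ classes as part of the proposition. The paper does not: its proof of Proposition~\ref{BraidOrbitAction40} establishes only that $l\mapsto k_l$ is an $H_6$-equivariant surjection $\{1,\dots,40\}\to\{\widehat{h_1}\}^{H_6}$; the cardinality claim $|Z_{40}|=40$ appears in Theorem~\ref{BraidOrbitsOfSize40} and the paper is terse there. Your attempt to supply distinctness is, however, not actually carried out. The promised ``case analysis in the spirit of (\ref{eq:ConjugationTau_Part1})--(\ref{eq:ConjugationTau_Part2})'' would face up to $\binom{40}{2}$ pairs, and more seriously your reasoning conflates $\Gamma$ with $G$: a collapse $e_i=e_j$ in a quotient $G$ need not contradict the relations of $\Gamma$ --- it may simply witness an additional relation holding in $G$. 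Knowing the $40$ classes are distinct in $\Gamma$ (via $L_2(8)$) says nothing about an arbitrary centerless quotient.

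If you want a clean distinctness argument, use the action itself. Table~\ref{tab:PermutationRepresentationOnZ40Betaj} defines a fixed permutation group $\rho_6(H_6)\cong S_4(3)$ on the $40$ labels, independent of $G$; one checks once (in GAP) that this degree-$40$ action is primitive. Any $H_6$-equivariant equivalence relation on $\{1,\dots,40\}$ is then trivial, so either all $k_l$ are distinct or all coincide. The latter forces $[k_1]=[k_3]$, and comparing entries $1$ and $3$ of $k_1,k_3$ gives a conjugator $\tau$ with $\sigma_1^{\tau}=\sigma_1$ and $\sigma_1^{\tau}=\sigma_2$, hence $\sigma_1=\sigma_2$ and $G$ abelian, contradicting $Z(G)=I<G$.
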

\begin{proof}
By construction, we have $\widehat{h_1} = k_1$ and $\{ \widehat{h_1} \}^{H_6} \supseteq \{ k_1,\dots,k_{40} \}$.
If we can verify
$$k_l^{\beta_j} = k_{\rho_6(\beta_j)(l)}$$
for $j=2,\dots,6$ and $l=1,\dots,40$ with $k_l$ from table \ref{tab:ElementsZ40List} 
and $\rho_6(\beta_j)$ from table \ref{tab:PermutationRepresentationOnZ40Betaj} the proposition is demonstrated.
This has been done with the help of a computer.
To this the prerequisites of Theorem~\ref{BraidOrbitsOfSize40} play a crucial role.
\end{proof}
\begin{table}[H]
\centering
\footnotesize
\captionsetup{font=footnotesize}
\caption{The elements of $Z_{40}$}
\label{tab:ElementsZ40List}
\begin{tabular}{lll}
\hline\noalign{\smallskip}
$l$        & $k_l$ & $\beta \in H_6$ with $k_1^{\beta} = k_l$ \\
\noalign{\smallskip}\hline\noalign{\smallskip}
1&$[\sigma_1,\sigma_2,\sigma_1,\sigma_1,\sigma_2,\sigma_1]$&$\iota$\\
2&$[\sigma_2,\sigma_1,\sigma_1,\sigma_1,\sigma_1^{-1}\sigma_2\sigma_1,\sigma_1]$&$\beta_2$\\
3&$[\sigma_1,\sigma_2\sigma_1\sigma_2^{-1},\sigma_2,\sigma_1,\sigma_2,\sigma_1]$&$\beta_3$\\
4&$[\sigma_1,\sigma_2\sigma_1\sigma_2^{-1},\sigma_2\sigma_1\sigma_2^{-1},\sigma_2,\sigma_2,\sigma_1]$&$\beta_3\beta_4$\\
5&$[\sigma_1,\sigma_2,\sigma_1,\sigma_1\sigma_2\sigma_1^{-1},\sigma_1,\sigma_1]$&$\beta_5$\\
6&$[\sigma_2,\sigma_1,\sigma_1,\sigma_2,\sigma_1,\sigma_1]$&$\beta_2\beta_5$\\
7&$[\sigma_1,\sigma_2\sigma_1\sigma_2^{-1},\sigma_2,\sigma_1\sigma_2\sigma_1^{-1},\sigma_1,\sigma_1]$&$\beta_3\beta_5$\\
8&$[\sigma_1,\sigma_2,\sigma_1,\sigma_1,\sigma_2\sigma_1\sigma_2^{-1},\sigma_2]$&$\beta_6$\\
9&$[\sigma_2,\sigma_1,\sigma_1,\sigma_1,\sigma_1^{-1}\sigma_2\sigma_1\sigma_2^{-1}\sigma_1,\sigma_1^{-1}\sigma_2\sigma_1]$
&$\beta_2\beta_6$\\
10&$[\sigma_1,\sigma_2\sigma_1\sigma_2^{-1},\sigma_2,\sigma_1,\sigma_2\sigma_1\sigma_2^{-1},\sigma_2]$&$\beta_3\beta_6$\\
\noalign{\smallskip}\hline\noalign{\smallskip}
11&$[\sigma_1,\sigma_2\sigma_1\sigma_2^{-1},\sigma_2\sigma_1\sigma_2^{-1},\sigma_2,\sigma_2\sigma_1\sigma_2^{-1},\sigma_2]$
&$\beta_3\beta_4\beta_6$\\
12&$[\sigma_1,\sigma_2,\sigma_2^{-1}\sigma_1\sigma_2,\sigma_2^{-1}\sigma_1\sigma_2,\sigma_1,\sigma_2^{-1}\sigma_1\sigma_2]$
&$\beta_2^{2}$\\
13&$[\sigma_1,\sigma_2,\sigma_1,\sigma_2,\sigma_1,\sigma_2]$&$\beta_3\beta_2$\\
14&$[\sigma_1,\sigma_2^{-1}\sigma_1\sigma_2,\sigma_2^{-1}\sigma_1^{-1}\sigma_2\sigma_1\sigma_2^{-1}\sigma_1\sigma_2,
\sigma_1,\sigma_1,\sigma_2^{-1}\sigma_1\sigma_2]$&$\beta_3\beta_4\beta_2$\\
15&$[\sigma_1,\sigma_2,\sigma_2^{-1}\sigma_1\sigma_2,\sigma_2,\sigma_2^{-1}\sigma_1\sigma_2,
\sigma_2^{-1}\sigma_1\sigma_2]$&$\beta_2\beta_5\beta_2$\\
16&$[\sigma_1,\sigma_2^{-1}\sigma_1\sigma_2,\sigma_1,\sigma_2,\sigma_2^{-1}\sigma_1\sigma_2,
\sigma_2^{-1}\sigma_1\sigma_2]$&$\beta_3\beta_5\beta_2$\\
17&$[\sigma_1,\sigma_2,\sigma_2^{-1}\sigma_1\sigma_2,\sigma_2^{-1}\sigma_1\sigma_2,
\sigma_2^{-1}\sigma_1^{-1}\sigma_2\sigma_1\sigma_2^{-1}\sigma_1\sigma_2,\sigma_1]$&$\beta_2\beta_6\beta_2$\\
18&$[\sigma_1,\sigma_2^{-1}\sigma_1\sigma_2,\sigma_1,\sigma_2^{-1}\sigma_1\sigma_2,
\sigma_2^{-1}\sigma_1^{-1}\sigma_2\sigma_1\sigma_2^{-1}\sigma_1\sigma_2,\sigma_1]$&$\beta_3\beta_6\beta_2$\\
19&$[\sigma_1,\sigma_1,\sigma_2\sigma_1\sigma_2^{-1},\sigma_1,\sigma_2,\sigma_1]$&$\beta_3^{2}$\\
20&$[\sigma_1,\sigma_1,\sigma_2\sigma_1\sigma_2^{-1},\sigma_1\sigma_2\sigma_1^{-1},\sigma_1,\sigma_1]$&$\beta_3\beta_5\beta_3$\\
\noalign{\smallskip}\hline\noalign{\smallskip}
21&$[\sigma_1,\sigma_1,\sigma_2\sigma_1\sigma_2^{-1},\sigma_1,\sigma_2\sigma_1\sigma_2^{-1},\sigma_2]$&$\beta_3\beta_6\beta_3$\\
22&$[\sigma_1,\sigma_1,\sigma_2,\sigma_2^{-1}\sigma_1\sigma_2,\sigma_1,\sigma_2^{-1}\sigma_1\sigma_2]$&$\beta_2^{2}\beta_3$\\
23&$[\sigma_1,\sigma_1,\sigma_2^{-1}\sigma_1\sigma_2,\sigma_1,\sigma_1,
\sigma_2^{-1}\sigma_1\sigma_2]$&$\beta_3\beta_4\beta_2\beta_3$\\
24&$[\sigma_1,\sigma_1,\sigma_2,\sigma_2,
\sigma_2^{-1}\sigma_1\sigma_2,\sigma_2^{-1}\sigma_1\sigma_2]$&$\beta_2\beta_5\beta_2\beta_3$\\
25&$[\sigma_1,\sigma_1,\sigma_2,\sigma_2^{-1}\sigma_1\sigma_2,
\sigma_2^{-1}\sigma_1^{-1}\sigma_2\sigma_1\sigma_2^{-1}\sigma_1\sigma_2,\sigma_1]$&$\beta_2\beta_6\beta_2\beta_3$\\
26&$[\sigma_1,\sigma_2,\sigma_1^{2}\sigma_2\sigma_1^{-2},\sigma_1,\sigma_1,\sigma_1]$&$\beta_5\beta_4$\\
27&$[\sigma_2,\sigma_1,\sigma_1\sigma_2\sigma_1^{-1},\sigma_1,\sigma_1,\sigma_1]$&$\beta_2\beta_5\beta_4$\\
28&$[\sigma_1,\sigma_2\sigma_1\sigma_2^{-1},\sigma_1,\sigma_2\sigma_1\sigma_2^{-1},\sigma_2\sigma_1\sigma_2^{-1},
\sigma_2]$&$\beta_3\beta_4\beta_6\beta_4$\\
29&$[\sigma_1,\sigma_2^{-1}\sigma_1\sigma_2,\sigma_2^{-1}\sigma_1\sigma_2,
\sigma_2^{-1}\sigma_1^{-1}\sigma_2\sigma_1\sigma_2^{-1}\sigma_1\sigma_2,
\sigma_1,\sigma_2^{-1}\sigma_1\sigma_2]$&$\beta_3\beta_4\beta_2\beta_4$\\
30&$[\sigma_1,\sigma_2,\sigma_2^{-1}\sigma_1\sigma_2\sigma_1^{-1}\sigma_2,\sigma_2^{-1}\sigma_1\sigma_2,
\sigma_2^{-1}\sigma_1\sigma_2,\sigma_2^{-1}\sigma_1\sigma_2]$&$\beta_2\beta_5\beta_2\beta_4$\\
\noalign{\smallskip}\hline\noalign{\smallskip}
31&$[\sigma_1,\sigma_1,\sigma_2\sigma_1\sigma_2^{-1}\sigma_1\sigma_2\sigma_1^{-1}\sigma_2^{-1},
\sigma_2\sigma_1\sigma_2^{-1},\sigma_2\sigma_1\sigma_2^{-1},\sigma_2]$&$\beta_3\beta_6\beta_3\beta_4$\\
32&$[\sigma_1,\sigma_1,\sigma_1,\sigma_2,\sigma_1,\sigma_2^{-1}\sigma_1\sigma_2]$&$\beta_2^{2}\beta_3\beta_4$\\
33&$[\sigma_1,\sigma_1,\sigma_1,\sigma_2,\sigma_2^{-1}\sigma_1^{-1}\sigma_2\sigma_1\sigma_2^{-1}\sigma_1\sigma_2,
\sigma_1]$&$\beta_2\beta_6\beta_2\beta_3\beta_4$\\
34&$[\sigma_1,\sigma_2\sigma_1\sigma_2^{-1},\sigma_2\sigma_1\sigma_2^{-1},\sigma_2^{2}\sigma_1\sigma_2^{-2},
\sigma_2,\sigma_2]$&$\beta_3\beta_4\beta_6\beta_5$\\
35&$[\sigma_1,\sigma_1,\sigma_1,\sigma_1^{-1}\sigma_2\sigma_1,\sigma_2,
\sigma_2^{-1}\sigma_1\sigma_2]$&$\beta_2^{2}\beta_3\beta_4\beta_5$\\
36&$[\sigma_1,\sigma_2,\sigma_1,\sigma_1,\sigma_1,\sigma_2\sigma_1\sigma_2^{-1}]$&$\beta_6^{2}$\\
37&$[\sigma_2,\sigma_1,\sigma_1,\sigma_1,\sigma_1,\sigma_1^{-1}\sigma_2\sigma_1\sigma_2^{-1}\sigma_1]$&$\beta_2\beta_6^{2}$\\
38&$[\sigma_1,\sigma_2,\sigma_2^{-1}\sigma_1\sigma_2,\sigma_2^{-1}\sigma_1\sigma_2,\sigma_2^{-1}\sigma_1\sigma_2,
\sigma_2^{-1}\sigma_1^{-1}\sigma_2\sigma_1\sigma_2^{-1}\sigma_1\sigma_2]$&$\beta_2\beta_6\beta_2\beta_6$\\
39&$[\sigma_2,\sigma_1^{2}\sigma_2\sigma_1^{-2},\sigma_1,\sigma_1,\sigma_1,\sigma_1]$&$\beta_2\beta_5\beta_4\beta_3$\\
40&$[\sigma_1,\sigma_1,\sigma_1,\sigma_1,\sigma_1^{-1}\sigma_2\sigma_1,\sigma_2^{-1}\sigma_1\sigma_2]$
&$\beta_2^{2}\beta_3\beta_4\beta_5^{2}$\\
\noalign{\smallskip}\hline
\end{tabular}
\end{table}
\begin{table}[!htbp]
\centering
\footnotesize
\captionsetup{font=footnotesize}
\caption{The action of $\beta_j$ on $Z_{40}$}
\label{tab:PermutationRepresentationOnZ40Betaj}
\begin{tabular}{ll}
\hline\noalign{\smallskip}
$j$        & $\rho_6(\beta_j)$ \\
\noalign{\smallskip}\hline\noalign{\smallskip}
2  & $(1,2,12)(3,13,11)(4,14,28)(5,6,15)(7,16,34)(8,9,17)(10,18,29)(26,27,30)(36,37,38)$ \\
3  & $(1,3,19)(5,7,20)(8,10,21)(12,22,13)(14,23,36)(15,24,16)(17,25,18)(27,39,30)(28,38,31)$ \\
4  & $(3,4,18)(5,26,7)(6,27,16)(10,11,28)(13,14,29)(15,30,34)(19,25,33)(21,31,35)(22,32,23)$ \\
5  & $(1,5,18)(2,6,29)(3,7,17)(8,13,16)(9,11,34)(10,12,15)(19,20,25)(21,22,24)(32,35,40)$ \\
6  & $(1,8,36)(2,9,37)(3,10,14)(4,11,29)(12,17,38)(13,18,28)(19,21,23)(22,25,31)(32,33,35)$ \\
\noalign{\smallskip}\hline
\end{tabular}
\end{table}
\begin{table}[!htbp]
\centering
\footnotesize
\captionsetup{font=footnotesize}
\caption{The action of $\beta_{ij}$ on $Z_{40}$}
\label{tab:PermutationRepresentationOnZ40Betaij}
\begin{tabular}{ll}
\hline\noalign{\smallskip}
$i,j$ & $\rho_6(\beta_{ij})$ \\
\noalign{\smallskip}\hline\noalign{\smallskip}
1,2   & $(1,12,2)(3,11,13)(4,28,14)(5,15,6)(7,34,16)(8,17,9)(10,29,18)(26,30,27)(36,38,37)$ \\
1,3   & $(1,11,22)(2,19,13)(4,31,36)(5,34,24)(6,20,16)(8,29,25)(9,21,18)(23,28,37)(26,39,30)$ \\
1,4   & $(1,10,23)(2,33,29)(3,21,36)(4,9,35)(5,39,15)(7,30,24)(8,14,19)(11,37,32)(16,20,27)$ \\
1,5   & $(1,39,12)(3,30,22)(4,34,32)(6,29,33)(9,37,40)(13,19,27)(14,25,16)(15,23,18)(17,24,36)$ \\
1,6   & $(2,9,40)(6,11,32)(8,12,24)(10,22,16)(13,15,21)(14,30,31)(23,27,28)(29,34,35)(36,39,38)$ \\
\noalign{\smallskip}\hline\noalign{\smallskip}
2,3   & $(1,19,3)(5,20,7)(8,21,10)(12,13,22)(14,36,23)(15,16,24)(17,18,25)(27,30,39)(28,31,38)$ \\
2,4   & $(1,33,18)(4,19,17)(6,15,39)(7,20,26)(8,35,28)(11,21,38)(12,29,23)(13,36,32)(24,34,27)$ \\
2,5   & $(2,12,39)(3,19,26)(4,25,7)(5,18,33)(6,23,10)(8,36,40)(9,24,38)(11,27,22)(16,32,28)$ \\
2,6   & $(1,8,40)(2,24,17)(3,6,21)(4,27,31)(5,13,32)(7,29,22)(14,23,26)(16,35,18)(37,38,39)$ \\
\noalign{\smallskip}\hline\noalign{\smallskip}
3,4   & $(3,18,4)(5,7,26)(6,16,27)(10,28,11)(13,29,14)(15,34,30)(19,33,25)(21,35,31)(22,23,32)$ \\
3,5   & $(1,3,26)(2,13,27)(4,17,5)(6,14,8)(9,15,28)(11,30,12)(20,25,33)(21,23,40)(24,32,31)$ \\
3,6   & $(1,6,10)(2,15,18)(4,30,38)(5,29,12)(14,26,36)(19,21,40)(20,22,32)(24,35,25)(27,37,28)$ \\
\noalign{\smallskip}\hline\noalign{\smallskip}
4,5   & $(1,18,5)(2,29,6)(3,17,7)(8,16,13)(9,34,11)(10,15,12)(19,25,20)(21,24,22)(32,40,35)$ \\
4,6   & $(1,16,28)(2,34,4)(3,15,38)(5,36,13)(6,37,11)(7,14,12)(19,24,31)(20,23,22)(33,35,40)$ \\
\noalign{\smallskip}\hline\noalign{\smallskip}
5,6   & $(1,36,8)(2,37,9)(3,14,10)(4,29,11)(12,38,17)(13,28,18)(19,23,21)(22,31,25)(32,35,33)$ \\
\noalign{\smallskip}\hline
\end{tabular}
\end{table}
\noindent
Note that we have $k_1^{\varphi_{6,2}} = k_1$ and $k_2^{\varphi_{6,2}} \neq k_2$, thus the $B_6$-orbit $Z_{40}$
is not a set of fixed points under $F_{6,2}$.
\begin{corollary}
\label{ExplicitZ40}
With the assumptions and notation of Theorem~\ref{BraidOrbitsOfSize40}, $C = [\sigma_1]$ and
$$[\underline{\sigma}] = [\sigma_1, \sigma_2, \sigma_1, \sigma_1, \sigma_2, \sigma_1],$$
we have
$$Z_{40} = [\underline{\sigma}]^{B_6} = [\underline{\sigma}]^{H_6} = \{ k_1, \dots, k_{40} \} \subseteq \Sigma^i(C,C,C,C,C,C).$$
If $l^i(C,C,C) > 0$ and $[\underline{\tau}] \in \Sigma^i(C,C,C)$, we have
$$Z_{40} \cap T_{B_6} =  Z_{40} \cap T_{H_6} = \emptyset$$
for $T_{B_6} = [\widehat{\underline{\tau}}]^{B_6}$ resp. $T_{H_6} = [\widehat{\underline{\tau}}]^{H_6}$.
\end{corollary}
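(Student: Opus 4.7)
The first chain of equalities is immediate from Theorem~\ref{BraidOrbitsOfSize40} and Proposition~\ref{BraidOrbitAction40}, which together exhibit the $40$ representatives $k_1,\dots,k_{40}$ in Table~\ref{tab:ElementsZ40List} and verify that they form a single orbit under both $B_6$ and $H_6$, namely $[\underline{\sigma}]^{B_6}=[\underline{\sigma}]^{H_6}$. So only the disjointness statements require work, and my plan is first to reduce both of them to the single assertion $\widehat{\underline{\tau}}\notin Z_{40}$. Since $Z_{40}$ is simultaneously a $B_6$-orbit and an $H_6$-orbit, each of $Z_{40}\cap T_{B_6}$ and $Z_{40}\cap T_{H_6}$ is an intersection of two orbits of the respective group, hence is either empty or equal to $Z_{40}$. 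With $T_{B_6}\subseteq T_{H_6}$, both intersections are empty precisely when $\widehat{\underline{\tau}}\notin\{k_1,\dots,k_{40}\}$.

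The main obstruction I would exploit is a ``repeating-triple'' invariant: if $\widehat{\underline{\tau}}=k_l$ in $\Sigma^i$, then the listed representative of $k_l$ must satisfy $k_l[i]=k_l[i+3]$ for $i=1,2,3$, because componentwise simultaneous conjugation preserves equalities between individual entries. A scan of the $40$ tuples in Table~\ref{tab:ElementsZ40List} shows that this property holds in the listed representatives only for $l\in\{1,6,10,23\}$; for every other $l$, matching $\widehat{\underline{\tau}}$ with $k_l$ would force an identification of two nominally distinct entries that, combined with $\sigma_1\sigma_2\sigma_1=\sigma_2\sigma_1\sigma_2$ and $(\sigma_1\sigma_2\sigma_1)^2=\iota$, collapses $G$ to the cyclic group $\langle\sigma_1\rangle$, contradicting $G>I$ together with $Z(G)=I$.

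For the four remaining candidates I would combine the further necessary conditions implied by $\widehat{\underline{\tau}}=k_l$: the triple $(k_l[1],k_l[2],k_l[3])$ must generate $G$ and have product $\iota$. For $l=1$ and $l=10$ the product simplifies to $\sigma_1\sigma_2\sigma_1$, which cannot be $\iota$ for the same cyclic-collapse reason. For $l=6$ the product $\sigma_2\sigma_1^2=\iota$ would force $\sigma_2=\sigma_1^{-2}$, again cyclic. The case $l=23$ is the genuinely subtle one and I expect to be the main obstacle: the product $\sigma_1^2\cdot\sigma_2^{-1}\sigma_1\sigma_2$ may well vanish in specific groups, so the plain product invariant is insufficient; however, vanishing forces $\sigma_2^{-1}\sigma_1\sigma_2\in\langle\sigma_1\rangle$, whereupon $\langle k_{23}[1],k_{23}[2],k_{23}[3]\rangle\subseteq\langle\sigma_1\rangle\ne G$ contradicts the generating requirement. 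In every case $\widehat{\underline{\tau}}\ne k_l$, and the corollary follows.
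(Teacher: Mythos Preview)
Your argument is correct and follows essentially the same route as the paper: reduce to showing $[\widehat{\underline{\tau}}]\notin\{k_1,\dots,k_{40}\}$, then eliminate each $k_l$ using the necessary constraints $\kappa_i=\kappa_{i+3}$ and $\kappa_1\kappa_2\kappa_3=\iota$ derived from the shape of $\widehat{\underline{\tau}}$. The paper simply tabulates one violated condition per index, whereas you first filter by the repeating-triple condition down to $l\in\{1,6,10,23\}$ and then finish those four with the product/generation constraints; this is only an organizational difference.

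The one place your argument adds something is $l=23$: the paper asserts that $\kappa_1\kappa_2\kappa_3=\sigma_1^2\,\sigma_2^{-1}\sigma_1\sigma_2\neq\iota$ outright, whereas you hedge and fall back on the generation requirement. In fact the paper's direct claim is justified (vanishing gives $\sigma_2^{-1}\sigma_1\sigma_2=\sigma_1^{-2}$, and the braid relation then forces $\sigma_2=\sigma_1^{-2}$, so $G$ is cyclic), so your caution was unnecessary---but your generation argument is a clean alternative that avoids invoking the braid relation at that step.
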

\begin{proof}
The first part comes from Theorem~\ref{BraidOrbitsOfSize40} together with Proposition~\ref{BraidOrbitAction40}.
\par
\noindent
Now let us assume $Z_{40} \cap T_{H_6} \neq \emptyset$. Then we have $Z_{40} = T_{H_6}$ as both sets are $H_6$-orbits
and for $[\underline{\tau}] \in \Sigma^i(C,C,C)$ we get
$[\widehat{\underline{\tau}}] = [\tau_1, \tau_2, \tau_3, \tau_1, \tau_2, \tau_3] \in Z_{40} = \{ k_1, \dots, k_{40} \}$.
Thus there exists an index $l \in \{ 1, \dots, 40 \}$ with
$[\widehat{\underline{\tau}}] = k_l = [\kappa_1, \kappa_2, \kappa_3, \kappa_4, \kappa_5, \kappa_6]$
and we get an element $\gamma \in G$ with
$$\gamma^{-1} \tau_i \gamma = \kappa_i, \gamma^{-1} \tau_i \gamma = \kappa_{i+3}, i=1,2,3.$$ This leads to
\begin{equation} 
\label{eq:KappaCondition1}
\kappa_1 = \kappa_4
\end{equation}
\begin{equation} 
\label{eq:KappaCondition2}
\kappa_2 = \kappa_5
\end{equation}
\begin{equation} 
\label{eq:KappaCondition3}
\kappa_3 = \kappa_6
\end{equation}
\begin{equation}
\label{eq:KappaCondition4}
\kappa_1 \kappa_2 \kappa_3 = \iota
\end{equation}
\begin{equation}
\label{eq:KappaCondition5}
\kappa_4 \kappa_5 \kappa_6 = \iota
\end{equation}
These conditions rule out $k_l$ from table \ref{tab:ElementsZ40List} for $l = 1, \dots, 40$.
For example in case $l=1$ we get
$\kappa_1 \kappa_2 \kappa_3 = \sigma_1 \sigma_2 \sigma_1$ and therefore $\kappa_1 \kappa_2 \kappa_3 \neq \iota$
due to $o(\sigma_1 \sigma_2 \sigma_1)=2$.
Table \ref{tab:ConditionCheckForkl} contains the check for $k_l$, $l = 1, \dots, 40$.
\begin{table}[h!]
\centering
\footnotesize
\captionsetup{font=footnotesize}
\caption{Check for $k_l$, $l = 1, \dots, 40$}
\label{tab:ConditionCheckForkl}
\begin{tabular}{ll}
\hline\noalign{\smallskip}
Condition        & Violated for $l$ resp. $k_l$ \\
\noalign{\smallskip}\hline\noalign{\smallskip}
(\ref{eq:KappaCondition1}) & 2, 4, 9, 15, 16, 18, 20, 27, 30, 32, 33, 35, 37, 39 \\
(\ref{eq:KappaCondition2}) & 5, 13, 14, 19, 26, 29, 31, 36, 40 \\
(\ref{eq:KappaCondition3}) & 3, 8, 28 \\
(\ref{eq:KappaCondition4}) & 1, 6, 7, 10, 12, 17, 22, 23, 24, 25, 38 \\
(\ref{eq:KappaCondition5}) & 11, 21, 34, \\
\noalign{\smallskip}\hline
\end{tabular}
\end{table}
Finally, our hypothesis turns out to be wrong and we get $Z_{40} \cap T_{H_6} = \emptyset$.
Due to $T_{B_6} \subseteq T_{H_6}$ we get $Z_{40} \cap T_{B_6} = \emptyset$.
\end{proof}
\begin{corollary}
\label{Systems23ToBraidOrbitsOfSize40}
Let $G>I$ be a finite group with $Z(G)=I$ and $(2A,3A,C)$ a class vector of $G$ with $l^i(2A,3A,C)>0$,
$2A$ a class of involutions and $3A$ a class of elements with order $3$.
Then $\Sigma^i(C,C,C,C,C,C)$ contains $B_6$-orbits $Z_{40}$ with size $40$.
If $l^i(C,C,C) > 0$, then $\Sigma^i(C,C,C,C,C,C)$ contains at least two $B_6$-orbits.
\end{corollary}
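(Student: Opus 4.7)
The plan is to chain together Corollary~\ref{Systems23ToBraidOrbitsOfSize4}, Theorem~\ref{BraidOrbitsOfSize40}, and Corollary~\ref{ExplicitZ40}. First, because $l^i(2A,3A,C)>0$, pick a generating $3$-system $[\tau_1,\tau_2,\tau_3]\in\Sigma^i(2A,3A,C)$. Corollary~\ref{Systems23ToBraidOrbitsOfSize4} manufactures from it the elements $\sigma_1=\tau_2^2\tau_1$ and $\sigma_2=\tau_1\tau_2^2$ which by construction satisfy $G=\langle\sigma_1,\sigma_2\rangle$, $\sigma_1\sigma_2\sigma_1=\sigma_2\sigma_1\sigma_2=\tau_1$, $o(\sigma_1\sigma_2\sigma_1)=2$, and $[\sigma_1]=[\sigma_2]=[\tau_3]=C$. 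These are exactly the hypotheses of Theorem~\ref{BraidOrbitsOfSize40}.

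Second, I form $[\underline{\sigma}]=[\sigma_1,\sigma_2,\sigma_1,\sigma_1,\sigma_2,\sigma_1]$. I should briefly check that this is a legitimate generating $6$-system in $\Sigma^i(C,C,C,C,C,C)$: generation is inherited from $\langle\sigma_1,\sigma_2\rangle=G$, all six entries lie in $C$, and the product equals $(\sigma_1\sigma_2\sigma_1)^2=\tau_1^2=\iota$ since $\tau_1$ is an involution. Invoking Theorem~\ref{BraidOrbitsOfSize40} then yields a $B_6$-orbit $Z_{40}=[\underline{\sigma}]^{B_6}$ of length $40$ inside $\Sigma^i(C,C,C,C,C,C)$, giving the first assertion.

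Third, under the additional hypothesis $l^i(C,C,C)>0$, pick any $[\underline{\tau}]\in\Sigma^i(C,C,C)$ and consider its ``doubled'' system $[\widehat{\underline{\tau}}]=[\tau_1,\tau_2,\tau_3,\tau_1,\tau_2,\tau_3]$. Since $(\tau_1\tau_2\tau_3)^2=\iota$ and the entries generate $G$, this lies in $\Sigma^i(C,C,C,C,C,C)$ and has a well-defined $B_6$-orbit $T_{B_6}=[\widehat{\underline{\tau}}]^{B_6}$. Corollary~\ref{ExplicitZ40} asserts precisely that $Z_{40}\cap T_{B_6}=\emptyset$, so $Z_{40}$ and $T_{B_6}$ are two distinct $B_6$-orbits in $\Sigma^i(C,C,C,C,C,C)$, proving the second assertion.

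The only non-routine step is verifying that $[\underline{\sigma}]$ really does generate $G$ and lies in the intended class tuple, but both follow immediately from the relations collected in Corollary~\ref{Systems23ToBraidOrbitsOfSize4}; the separation of $Z_{40}$ from $T_{B_6}$ is entirely handled by the already-proven Corollary~\ref{ExplicitZ40}, which is the real content, so here the corollary is just a clean packaging of earlier results rather than requiring new computation.
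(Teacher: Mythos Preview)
Your proof is correct and follows essentially the same route as the paper: invoke Corollary~\ref{Systems23ToBraidOrbitsOfSize4} to produce $\sigma_1,\sigma_2$ satisfying the braid relation and involution condition, apply Theorem~\ref{BraidOrbitsOfSize40} to obtain $Z_{40}$, and then use Corollary~\ref{ExplicitZ40} to separate $Z_{40}$ from the doubled orbit $T_{B_6}$. Your version is in fact more carefully written, since you explicitly verify that $[\underline{\sigma}]$ is a genuine element of $\Sigma^i(C,C,C,C,C,C)$ via $(\sigma_1\sigma_2\sigma_1)^2=\tau_1^2=\iota$, which the paper leaves implicit.
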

\begin{proof}
We use Corollary~\ref{Systems23ToBraidOrbitsOfSize4} to construct elements $h_1$ resp. $\widehat{h_1}$
from $(2,3)$-generating systems of $G$ and get $B_6$-orbits $Z_{40}$ from Theorem~\ref{BraidOrbitsOfSize40}.
From Corollary~\ref{ExplicitZ40} follows the existence of $B_6$-orbits different from $B_6$-orbits $Z_{40}$.
\end{proof}
\noindent
Finally we will look at a possible generalization.
Let $G>I$ be a finite group with $Z(G)=I$ and $\sigma_1, \sigma_2, \sigma_3 \in G$ with
$G=\langle \sigma_1, \sigma_2, \sigma_3 \rangle$,
\begin{equation} 
\label{eq:Artin4BraidRelation1}
\sigma_1 \sigma_2 \sigma_1 = \sigma_2 \sigma_1 \sigma_2, \sigma_2 \sigma_3 \sigma_2 = \sigma_3 \sigma_2 \sigma_3
\end{equation}
\begin{equation}
\label{eq:Artin4BraidRelation2}
\sigma_1 \sigma_3 = \sigma_3 \sigma_1
\end{equation}
\begin{equation}
\label{eq:Artin4TrivialCenter}
(\sigma_1 \sigma_2 \sigma_3)^4 = \iota
\end{equation}
Thus $G$ is a factor group of the Artin braid group $Ar_4$ with $3$ strands.
Using (\ref{eq:Artin4BraidRelation1}) and (\ref{eq:Artin4BraidRelation2}) we get 
$C = [\sigma_1] = [\sigma_2] = [\sigma_3]$ and
$$\sigma_1 \sigma_2 \sigma_1 \sigma_2 \sigma_3 \sigma_2 = \sigma_1 \sigma_2 \sigma_1 \sigma_3 \sigma_2 \sigma_3 =
\sigma_1 \sigma_2 \sigma_3 \sigma_1 \sigma_2 \sigma_3 = (\sigma_1 \sigma_2 \sigma_3)^2.$$
From (\ref{eq:Artin4TrivialCenter}) we get
$$h_1= [\sigma_1, \sigma_2, \sigma_1, \sigma_2, \sigma_3, \sigma_2,
\sigma_1 \sigma_2 \sigma_1 \sigma_2 \sigma_3 \sigma_2] \in \Sigma^i(C,C,C,C,C,C,D)$$
with $D = [(\sigma_1 \sigma_2 \sigma_3)^2]$ and
$$\widehat{h_1} = 
[\sigma_1, \sigma_2, \sigma_1, \sigma_2, \sigma_3, \sigma_2, \sigma_1, \sigma_2, \sigma_1, \sigma_2, \sigma_3, \sigma_2] 
\in \Sigma^i(C,\dots,C).$$
By definition, $\widehat{h_1}$ is a fixed point under the action of $F_{12,2}$.
With respect to Theorem~\ref{BraidOrbitsOfSize4} and Theorem~\ref{BraidOrbitsOfSize40}, we may
ask whether $\{ h_1\}^{B_7}$, $\{ \widehat{h_1} \}^{B_{12}}$ and $\{ \widehat{h_1} \}^{H_{12}}$
are generic braid orbits.
\begin{example}
\label{S4_2B2B2B2B2B2B2A_Part1}
For the symmetric group $S_4$, the elements $\sigma_1 = (1)(2)(3,4)$, $\sigma_2 = (1)(2,3)(4)$ and $\sigma_3 = (1,2)(3)(4)$
generate $S_4$ and satisfy (\ref{eq:Artin4BraidRelation1}), (\ref{eq:Artin4BraidRelation2}) and (\ref{eq:Artin4TrivialCenter}).
Using a computer and GAP \cite{RefGAP}, we get that $Z_{480} = \{ h_1\}^{B_7}$ is an orbit of length $480$
under the action of $B_7$ with
\begin{itemize}
\item[]
$\rho^t_7(\beta_{ij}) = (1)^{156} (3)^{108}, 1 \leq i < j \leq 6$,
\item[]
$\rho^t_7(\beta_{i7}) = (1)^{160} (2)^{160}, 1 \leq i \leq 6$,
\item[]
$\vert \rho_7(B_7) \vert = 2^{182} \cdot 3^{48} \cdot 5$
\end{itemize}
and genus $g_{Z_{480}}=141$.
The group $\rho_7(B_7)$ is not solvable.
\end{example}
\noindent
Just like we did it in the proof for Theorem~\ref{BraidOrbitsOfSize40}, we could check whether the action of $B_7$
in Example~\ref{S4_2B2B2B2B2B2B2A_Part1} is generic. Unfortunately supporting software for $m \geq 7$ is not available.

\section{Class vectors of dimension 5 and 6} \label{SectionClassvectorsOfDimension5And6}

Table \ref{tab:Data_56DimClassvectors} contains some class vectors of length $5$ and $6$ of $M_{23}$, $M_{24}$
and the Suzuki group $Sz(8)$. We always have $l^i(C) = \vert Z \vert$.
Column $ \vert Z \vert $ now contains the length of $B_5$- resp. $B_6$-orbits and $g_Z$ is the corresponding braid orbit genus 
(the setup in section \ref{SectionTheActionOfBraids} can easily be extended to class vectors of length $m$).
None of the class vectors has a small orbit with $g_Z=0$, thus \cite{RefMM2018}, III, Theorem 5.7 cannot be applied
to realize $M_{23}$, $M_{24}$ or $Sz(8)$ with one of these class vectors as Galois group over $\mathbb{Q}(v,t)$.
For $m>4$, the class vector $(4B,2A,2A,2A,2A)$ of $M_{24}$ is the only one in $M_{24}$ with $g_{M_{23}}=0$.
In \cite{RefKoe2014}, 5.2, this class vector is also considered. But until now, the methods used in \cite{RefKoe2014} 
did not generate a Galois realization of $M_{23}$ over $\mathbb{Q}$.
\begin{table}[H]
\centering
\footnotesize
\captionsetup{font=footnotesize}
\caption{Some class vectors of length $5$ and $6$ with $l^i(C) = \vert Z \vert$}
\label{tab:Data_56DimClassvectors}
\begin{tabular}{llrrrrl}
\hline\noalign{\smallskip}
$G$ & $C$ & $g_{M_{23}}$ & $ \vert Z \vert $ & $g_Z$ & $\frac{g_Z}{\vert Z \vert}$ & Ref.\\
\noalign{\smallskip}\hline\noalign{\smallskip}
$M_{23}$ & $(2A_5)$ & - & 0 & - & - & -\\
$M_{23}$ & $(3A,2A_4)$ & - & 21,456 & 12,961 & 0.6041 & \cite{RefKoe2014}, \cite{RefMag2003}\\
$M_{23}$ & $(4A,2A_4)$ & - & 244,224 & 170,785 & 0.6993 & -\\
$M_{23}$ & $(5A,2A_4)$ & - & 732,000 & 519,677 & 0.7099 & -\\
$M_{23}$ & $(6A,2A_4)$ & - & 1,050,336 & 776,541 & 0.7393 & -\\
$M_{23}$ & $(8A,2A_4)$ & - & 1,859,328 & 1,381,729 & 0.7331 & -\\
\noalign{\smallskip}\hline\noalign{\smallskip}
$M_{24}$ & $(2A_5)$ & - & 0 & - & - & -\\
$M_{24}$ & $(2B,2A_4)$ & - & 0 & - & - & -\\
$M_{24}$ & $(3A,2A_4)$ & - & 0 & - & - & -\\
$M_{24}$ & $(3B,2A_4)$ & 1 & 75,816 & 57,345 & 0.7564 & -\\
$M_{24}$ & $(4A,2A_4)$ & 1 & 74,496 & 53,133 & 0.7132 & -\\
$M_{24}$ & $(4B,2A_4)$ & 0 & 72,000 & 50,789 & 0.7054 & \cite{RefKoe2014}, \cite{RefMag2003}\\
$M_{24}$ & $(4C,2A_4)$ & 2 & 468,928 & 367,193 & 0.7830 & -\\
$M_{24}$ & $(2B_5)$ & 7 & 477,680 & 257,865 & 0.5398 & -\\
\noalign{\smallskip}\hline\noalign{\smallskip}
$Sz(8)$ & $(2A_5)$ & - & 22,752 & 16,249 & 0.7142 & -\\
$Sz(8)$ & $(5A,2A_4)$ & - & 291,150 & 202,819 & 0.6966 & -\\
\noalign{\smallskip}\hline\noalign{\smallskip}
$M_{23}$ & $(2A_6)$ & - & 16,463,280 & 9,722,161 & 0.5905 & -\\
\noalign{\smallskip}\hline\noalign{\smallskip}
$M_{24}$ & $(2A_6)$ & 1 & ? & ? & ? & -\\
$M_{24}$ & $(2B_6)$ & 13 & ? & ? & ? & -\\
\noalign{\smallskip}\hline\noalign{\smallskip}
$Sz(8)$ & $(2A_6)$ & - & 10,507,680 & 12,175,651 & 1.1587 & -\\
\noalign{\smallskip}\hline
\end{tabular}
\end{table}

\section{Specialized class vectors and orbits}\label{SectionDerivedClassvectors}

Again, let $G$ be a finite group with $Z(G)=I$ and $[\underline{\sigma}] = [\sigma_1, \dots, \sigma_m]$
a generating $m$-system in $\Sigma^i(C_1,\dots, C_m)$.
For $1 \leq i < m$, we can define
$$[\underline{\sigma}]_{(i)} = [\sigma_1, \dots, \sigma_{i-1}, \sigma_i \sigma_{i+1}, \sigma_{i+2}, \dots, \sigma_m] \in 
{\overline\Sigma}^i(C_1,\dots, C_{i-1}, D_i,  C_{i+2}, \dots, C_m)$$
with $D_i = [\sigma_i \sigma_{i+1}]$ and ${\overline\Sigma}^i(\dots) \supseteq {\Sigma}^i(\dots)$
allowing also $m-1$-systems that may generate a proper subgroup of $G$.
In this section, we will only consider the case $[\underline{\sigma}]_{(i)} \in \Sigma^i(\dots)$.
We use the notation
$$[\underline{\sigma}]_{(i_1,i_2)} = ([\underline{\sigma}]_{(i_1)})_{(i_2)}$$
for $1 \leq i < m$ and $1 \leq j < m-1$.
If we know the $B_m$-orbit $[\underline{\sigma}]^{B_m}$, what can be said about the $B_{m-1}$-orbit
$[\underline{\sigma}]_{(i)}^{B_{m-1}}$? We give some examples that may point to an answer.
\begin{example}
\label{FromZ40ToZ4}
We have 
$$(k_1)_{(4,4)} = [\sigma_1,\sigma_2,\sigma_1,\sigma_1,\sigma_2,\sigma_1]_{(4,4)} = [\sigma_1,\sigma_2,\sigma_1,\sigma_1\sigma_2\sigma_1] = h_1,$$
see Theorem~\ref{BraidOrbitsOfSize40} and Theorem~\ref{BraidOrbitsOfSize4}. Thus we can get $Z_4$ from $Z_{40}$
by specialization.
\end{example}
\begin{example}
\label{FromZ40ToZ9}
Now consider
$$(k_1)_{(5)} = [\sigma_1,\sigma_2,\sigma_1,\sigma_1,\sigma_2,\sigma_1]_{(5)} = [\sigma_1,\sigma_2,\sigma_1,\sigma_1,\sigma_2\sigma_1].$$
Looking at $(k_1)_{(5)}$ in the examples listed in table \ref{tab:Data_B6OrbitsSiz40} leads to a $B_5$-orbit $Z_9$
deduced from $Z_{40}$ by specialization, see Theorem~\ref{BraidOrbitsOfSize9} below.
\end{example}
\begin{theorem}
\label{BraidOrbitsOfSize9}
Let $G>I$ be a finite group with $Z(G)=I$ and $\sigma_1, \sigma_2 \in G$ with
$G=\langle \sigma_1, \sigma_2 \rangle$, $o(\sigma_1 \sigma_2 \sigma_1 )=2$,
$\sigma_1 \sigma_2 \sigma_1 = \sigma_2 \sigma_1 \sigma_2$, $C = [\sigma_1]$, $D = [\sigma_2\sigma_1]$ and further
$[\underline{\sigma}] = [\sigma_1, \sigma_2, \sigma_1, \sigma_1, \sigma_2\sigma_1]$.
Then
$$Z_9 = [\underline{\sigma}]^{B_5} \subseteq \Sigma^i(C,C,C,C,D)$$
is an orbit of length $9$ under the action of $B_5$ with
\begin{itemize}
\item[]
$\rho^t_5(\beta_{ij}) = (1)^3(3)^2, 1 \leq i < j \leq 4$,
\item[]
$\rho^t_5(\beta_{i5}) = (1)(2)^4, 1 \leq i < 5$,
\item[]
$\rho_5(B_5) \cong ((C_3 \times C_3) \rtimes Q_8) \rtimes C_3$
\end{itemize}
and genus $g_{Z_9}=0$.
\end{theorem}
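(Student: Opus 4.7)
The plan is to follow the strategy of Theorem~\ref{BraidOrbitsOfSize4} and Theorem~\ref{BraidOrbitsOfSize40}: enumerate the orbit explicitly, tabulate the $B_5$-action on a list of nine representatives, and then read off the remaining invariants. As in the proof of Theorem~\ref{BraidOrbitsOfSize4}, the braid relation $\sigma_1\sigma_2\sigma_1 = \sigma_2\sigma_1\sigma_2$ together with $G = \langle\sigma_1,\sigma_2\rangle$ forces $C = [\sigma_1] = [\sigma_2]$, and with $\tau = \sigma_1\sigma_2\sigma_1$ an involution,
$$\sigma_1 \cdot \sigma_2 \cdot \sigma_1 \cdot \sigma_1 \cdot (\sigma_2\sigma_1) = \tau \cdot \sigma_1\sigma_2\sigma_1 = \tau^2 = \iota,$$
so $[\underline{\sigma}] \in \Sigma^i(C,C,C,C,D)$.

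To find the orbit I start from Example~\ref{FromZ40ToZ9}: the specialization $[\,\cdot\,]_{(5)}$ maps $k_1 \in Z_{40}$ to $[\underline{\sigma}]$. Scanning the images $(k_l)_{(5)}$ of the forty entries of Table~\ref{tab:ElementsZ40List} yields nine distinct candidate $5$-systems $z_1,\dots,z_9 \in \Sigma^i(C,C,C,C,D)$ with $z_1 = [\underline{\sigma}]$; for each $z_j$ an explicit braid word $\beta^{(j)} \in B_5$ realizing $z_j = z_1^{\beta^{(j)}}$ can be read off using (\ref{eq:BraidAction}) and (\ref{eq:BraidActionInvers}). I would then assemble a $9 \times 4$ action table in the style of Table~\ref{tab:ActionB4OnZ4} recording $z_j^{\beta_i}$ for $i = 2, 3, 4, 5$ and verify closure on the list, thereby establishing $Z_9 = \{z_1, \dots, z_9\}$ and the transitivity of $B_5$ on it.

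The main obstacle is distinctness of the $z_j$: it must be derived solely from the abstract hypotheses $G = \langle\sigma_1,\sigma_2\rangle$, $Z(G) = I$, the braid relation and $o(\tau) = 2$. Following the template of Corollary~\ref{ExplicitZ40}, each putative equality $z_i = z_j$ supplies a conjugator $\gamma \in G$; rewriting the resulting $\sigma$-words via the braid relation and using $Z(G) = I$ reduces the equality to one that forces $G = I$, contradicting $G > I$. A faithful test in $L_2(8)$ via Example~\ref{L2_8_7A7A7A2A_Part1} confirms the count, and the generic verification then proceeds in the spirit of Proposition~\ref{BraidOrbitAction40}. With the action table in hand, the permutation types $\rho^t_5(\beta_{ij}) = (1)^3(3)^2$ for $1 \le i < j \le 4$ and $\rho^t_5(\beta_{i5}) = (1)(2)^4$ follow by expanding $\beta_{ij} = \beta^{-1}_{i+1} \cdots \beta^{-1}_{j-1} \beta_j^2 \beta_{j-1} \cdots \beta_{i+1}$, GAP identifies the image of $B_5$ on nine letters as $((C_3 \times C_3) \rtimes Q_8) \rtimes C_3$ of order $216$, and the natural extension of the genus formula from Section~\ref{SectionTheActionOfBraidsInDimension4} to $m = 5$ gives
$$g_{Z_9} = 1 - 9 + \tfrac{1}{2}\bigl(4 \cdot 9 - 4 \cdot 5\bigr) = 0.$$
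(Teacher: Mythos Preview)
Your plan is essentially the paper's own: list nine explicit representatives, tabulate the pure-braid action, verify closure with computer assistance in the spirit of Proposition~\ref{BraidOrbitAction40}, identify $\rho_5(B_5)$ via GAP, and compute the genus. The paper records the nine elements $p_1,\dots,p_9$ together with braid words in $B_5$ sending $p_1$ to each $p_l$, and tabulates the action of the $\beta_{ij}$ directly rather than the $\beta_i$.

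One technical slip: you propose to record $z_j^{\beta_i}$ for $i=2,3,4,5$ and verify closure on the nine-element list, but $\beta_5$ does \emph{not} preserve $\Sigma^i(C,C,C,C,D)$ --- it sends the last two slots to classes $(D,C)$ --- so the set $\{z_1,\dots,z_9\}$ cannot be closed under $\beta_5$. This is why the paper, unlike in the fully symmetric case of Theorem~\ref{BraidOrbitsOfSize40}, tabulates the pure generators $\beta_{ij}$ directly (Table~\ref{tab:PermutationRepresentationOnZ9Betaij}) rather than the $\beta_j$. Your later step of expanding $\beta_{ij}$ from the $\beta_j$ would therefore need to be carried out in the larger set $\Sigma^i(C,C,C,C,D)^{sy}$, or you should simply compute the $\beta_{ij}$-action from the outset as the paper does. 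Also, obtaining the nine candidates by specializing $(k_l)_{(5)}$ is only a heuristic: different $k_l$ give fifth entries in different conjugacy classes, so you will not land in $\Sigma^i(C,C,C,C,D)$ for all forty $l$; the paper instead generates the $p_l$ directly as $p_1^{\beta}$ for explicit $\beta\in B_5$.
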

\begin{proof} The proof is analogous to the proof of Theorem~\ref{BraidOrbitsOfSize40}.
Let 
$$Z_9 = \{p_1, \dots, p_9\}$$
with $p_i, i=1, \dots, 9$, from table \ref{tab:ElementsZ9List}.
Then tables \ref{tab:ElementsZ9List} and \ref{tab:PermutationRepresentationOnZ9Betaij}
describe the action of $B_5$ on $Z_9$. Again this can be verified with the help of a computer.
The structure of the permutation group $\rho_5(B_5)$ can be computed using GAP \cite{RefGAP}.
From the permutation types $\rho^t_5(\beta_{ij})$ we get
$$g_{Z_9}= 1 - 9 + \frac{1}{2}(4\cdot 9 -5 -5 -5 -5) = 0.$$
\end{proof}
\begin{table}[!htbp]
\centering
\footnotesize
\captionsetup{font=footnotesize}
\caption{The elements of $Z_9$}
\label{tab:ElementsZ9List}
\begin{tabular}{lll}
\hline\noalign{\smallskip}
$l$        & $p_l$ & $\beta \in B_5$ with $p_1^{\beta} = p_l$ \\
\noalign{\smallskip}\hline\noalign{\smallskip}
1&$[\sigma_1,\sigma_2,\sigma_1,\sigma_1,\sigma_2\sigma_1]$&$\iota$\\
2&$[\sigma_2,\sigma_1\sigma_2\sigma_1^{-1},\sigma_1,\sigma_1,\sigma_2\sigma_1]$&$\beta_{12}$\\
3&$[\sigma_2^{-1}\sigma_1\sigma_2,\sigma_1,\sigma_1,\sigma_2,\sigma_1\sigma_2]$&$\beta_{13}$\\
4&$[\sigma_2,\sigma_1,\sigma_1^{-1}\sigma_2\sigma_1,\sigma_2,\sigma_2\sigma_1]$&$\beta_{14}$\\
5&$[\sigma_2^{-1}\sigma_1\sigma_2,\sigma_1,\sigma_1,\sigma_1,\sigma_2\sigma_1]$&$\beta_{13}\beta_{14}$\\
6&$[\sigma_1,\sigma_1,\sigma_1,\sigma_1^{-1}\sigma_2\sigma_1,\sigma_1\sigma_2]$&$\beta_{13}\beta_{15}$\\
7&$[\sigma_1,\sigma_1,\sigma_1^{-1}\sigma_2\sigma_1,\sigma_2,\sigma_1\sigma_2]$&$\beta_{14}\beta_{15}$\\
8&$[\sigma_1,\sigma_1,\sigma_2,\sigma_1,\sigma_1\sigma_2]$&$\beta_{23}$\\
9&$[\sigma_1,\sigma_2,\sigma_1,\sigma_2,\sigma_1\sigma_2]$&$\beta_{12}\beta_{23}$\\
\noalign{\smallskip}\hline
\end{tabular}
\end{table}
\begin{table}[!htbp]
\centering
\footnotesize
\captionsetup{font=footnotesize}
\caption{The action of $\beta_{ij}$ on $Z_9$}
\label{tab:PermutationRepresentationOnZ9Betaij}
\begin{tabular}{ll}
\hline\noalign{\smallskip}
$i,j$ & $\rho_5(\beta_{ij})$ \\
\noalign{\smallskip}\hline\noalign{\smallskip}
1,2   & $(1,2,5)(3,9,4)(6)(7)(8)$ \\
1,3   & $(1,3,7)(2)(4)(5,8,9)(6)$ \\
1,4   & $(1,4,8)(2)(3,5,6)(7)(9)$ \\
1,5   & $(1,2)(3,6)(4,7)(5)(8,9)$ \\
\noalign{\smallskip}\hline\noalign{\smallskip}
2,3   & $(1,8,4)(2,9,7)(3)(5)(6)$ \\
2,4   & $(1,6,9)(2,3,8)(4)(5)(7)$ \\
2,5   & $(2,5)(3,7)(4,8)(6,9)(1)$ \\
\noalign{\smallskip}\hline\noalign{\smallskip}
3,4   & $(3,4,9)(5)(6,7,8)(1)(2)$ \\
3,5   & $(1,4)(2,3)(5,9)(6,7)(8)$ \\
\noalign{\smallskip}\hline\noalign{\smallskip}
4,5   & $(1,9)(2,4)(3,5)(6)(7,8)$ \\
\noalign{\smallskip}\hline
\end{tabular}
\end{table}
\noindent
Note that elements in $D$ have order 3 due to 
$$(\sigma_2\sigma_1)^3 =(\sigma_2\sigma_1\sigma_2)(\sigma_1 \sigma_2\sigma_1)=(\sigma_1 \sigma_2\sigma_1)^2=\iota.$$
\begin{example}
\label{L2_8_7A7A7A7A3A}
For the class vector $(7A,7A,7A,7A,3A)$ of $L_2(8)$, the set
\par
\noindent
$\Sigma^i(7A,7A,7A,7A,3A)$ splits under the action of $B_5$ into
two braid orbits $Z_9$ and $Z_{5,904}$ of length $9$ and $5,904$.
We have $[\underline{\sigma}] \in Z_9$ with
$$[\underline{\sigma}]=[(2,9,4,3,5,7,6), (1,4,2,8,7,9,5), (2,9,4,3,5,7,6),$$
$$(2,9,4,3,5,7,6), (2,8,6)(3,5,1)(4,9,7)].$$
$[\underline{\sigma}]$ satisfies $\sigma_1 = \sigma_3 = \sigma_4$, $\sigma_5 = \sigma_2 \sigma_1$,
$o(\sigma_1 \sigma_2 \sigma_1 )=2$ and $\sigma_1 \sigma_2 \sigma_1 = \sigma_2 \sigma_1 \sigma_2$,
thus $[\underline{\sigma}] = p_1$ and $Z_9$ is the $B_5$-orbit from Theorem~\ref{BraidOrbitsOfSize9}.
\end{example}
\noindent
Looking at $(k_1)_{(i_1,i_2,i_3)}$, $(k_1)_{(i_1,i_2)}$ and $(k_1)_{(i_1)}$ for all possible 
values of $i_1, i_2, i_3$ leads to some more braid orbits deduced from $Z_{40}$ by specialization. 
Table \ref{tab:SomeBraidOrbitsDerivedFromZ40} contains the interesting cases. 
We omit $(k_1)_{(i_1,i_2,i_3)}$ because $B_3$ acts trivial on $\Sigma^i(C_1,C_2,C_3)$.
Note that we have $Z_{s,m} = \{(k_1)_s\}^{B_m} $ in table \ref{tab:SomeBraidOrbitsDerivedFromZ40}.
\begin{table}[!htbp]
\centering
\footnotesize
\captionsetup{font=footnotesize}
\caption{Some braid orbits deduced from $Z_{40}$ by specialization}
\label{tab:SomeBraidOrbitsDerivedFromZ40}
\begin{tabular}{cllrll}
\hline\noalign{\smallskip}
$s$ & $m$ & $(k_1)_s$ & $\vert Z_{s,m} \vert$ & $\rho_m(B_m)$ on $Z_{s,m}$ & $g_{Z_{s,m}}$ \\
\noalign{\smallskip}\hline\noalign{\smallskip}
()     & 6 & $[\sigma_1,\sigma_2,\sigma_1,\sigma_1,\sigma_2,\sigma_1]$ & 40 & $S_4(3)$ & 6 \\
(5)    & 5 & $[\sigma_1,\sigma_2,\sigma_1,\sigma_1,\sigma_2\sigma_1]$ &  9 & $((C_3 \times C_3) \rtimes Q_8) \rtimes C_3$ & 0 \\
(3)    & 5 & $[\sigma_1,\sigma_2,\sigma_1\sigma_1,\sigma_2,\sigma_1]$ & 12 & $((C_3 \times C_3) \rtimes Q_8) \rtimes C_3$ & 0 \\
(4, 4) & 4 & $[\sigma_1,\sigma_2,\sigma_1,\sigma_1\sigma_2\sigma_1]$ &  4 & $A_4 \cong S_2(3)$ & 0 \\
(3, 4) & 4 & $[\sigma_1,\sigma_2,\sigma_1\sigma_1,\sigma_2\sigma_1]$ &  3 & $S_3 \cong S_2(2)$ & 0 \\
(2, 3) & 4 & $[\sigma_1,\sigma_2\sigma_1,\sigma_1\sigma_2,\sigma_1]$ &  2 & $C_2$ & 0 \\
\noalign{\smallskip}\hline
\end{tabular}
\end{table}

\section{Heuristics for searching class vectors}\label{SectionHeuristicsForSearching}

In this section, let $m \geq 4$. For realizing finite groups $G$ as Galois groups using braid action, 
we need $B_m$-orbits $Z \subseteq \Sigma^i(C_1,\dots, C_m)$ of $G$ with $g_Z=0$
or small values of $g_Z$ and $g_{Z^{sy}}=0$ for $m=4$.
The formula for the computation of $g_Z$ is
$$g_Z = 1 -  \vert Z \vert  + \frac{1}{2}((m-1)\cdot  \vert Z \vert  - \sum\limits_{j=2}^{m} z_{1j})$$
with $z_{1j}$, $j=2,\dots,m$ the number of cycles of $\rho_m(\beta_{1j})$ on $Z$.
We have already seen in section \ref{SectionTheActionOfBraidsInDimension4} that the length of 
cycles of $\rho_m(\beta_{1j})$, $j=2,\dots,m$ is bounded above by orders of elements in $G$.
Thus, if $ \vert Z \vert $ increases, $z_{1j}$, $j=2,\dots,m$ must increase. 
For a finite group $G$, let
$${\bar o}_G = \frac{1}{ \vert G \vert } \sum\limits_{\sigma \in G} o(\sigma) =
\frac{1}{ \vert G \vert } \sum\limits_{C \in cl(G)} o(C) \vert C \vert  \in \mathbb{Q}$$
be the average element order of $G$ where the second sum runs over all conjugacy classes $C$ in $G$.
Setting $ \vert Z \vert ={\bar o}_G \cdot x$, assuming  $\lim\limits_{ \vert Z \vert \to\infty} z_{1j}=x$, $j=2,\dots,m$
and using the formula for $g_Z$ leads to
$${\tilde g}_Z = \frac{(m-3){\bar o}_G -m+1}{2{\bar o}_G} \vert Z \vert +1$$
and
$$\lim\limits_{\vert Z \vert \to\infty} \frac{{\tilde g}_Z}{\vert Z \vert} =
\frac{(m-3){\bar o}_G -m+1}{2{\bar o}_G} = l_{m, G}.$$
Table \ref{tab:Data_LimitRatioClassvectorsM23M24Sz8} contains the values of $l_{m, G}$ for
class vectors of $M_{23}$, $M_{24}$ and $Sz(8)$. For simplicity, we use the rounded values
${\bar o}_{M_{23}}=10.9757$, ${\bar o}_{M_{24}}=13.0090$ and ${\bar o}_{Sz(8)}=7.5312$.
\begin{table}[!htbp]
\centering
\footnotesize
\captionsetup{font=footnotesize}
\caption{Limit $l_{m, G}$ for class vectors of $M_{23}$, $M_{24}$ and $Sz(8)$}
\label{tab:Data_LimitRatioClassvectorsM23M24Sz8}
\begin{tabular}{llr}
\hline\noalign{\smallskip}
$m$ & $G$ & $l_{m, G}$ \\
\noalign{\smallskip}\hline\noalign{\smallskip}
4 & $M_{23}$ & 0.3633 \\
4 & $M_{24}$ & 0.3847 \\
4 & $Sz(8)$ & 0.3008 \\
\noalign{\smallskip}\hline\noalign{\smallskip}
5 & $M_{23}$ & 0.8178 \\
5 & $M_{24}$ & 0.8463 \\
5 & $Sz(8)$ & 0.7344 \\
\noalign{\smallskip}\hline\noalign{\smallskip}
6 & $M_{23}$ & 1.2722 \\
6 & $M_{24}$ & 1.3078 \\
6 & $Sz(8)$ & 1.1680 \\
\noalign{\smallskip}\hline
\end{tabular}
\end{table}
In tables \ref{tab:Data_56DimClassvectors} and \ref{tab:Data_Ratio4DimClassvectors} we list
rounded values for $\frac{g_Z}{\vert Z \vert}$.
\begin{table}[!htbp]
\centering
\footnotesize
\captionsetup{font=footnotesize}
\caption{Ratio $\frac{g_Z}{\vert Z \vert}$ for class vectors of length $4$ of $M_{23}$}
\label{tab:Data_Ratio4DimClassvectors}
\begin{tabular}{llrrr}
\hline\noalign{\smallskip}
$G$ & $C$ & $\vert Z \vert$ & $g_Z$ & $\frac{g_Z}{\vert Z \vert}$ \\
\noalign{\smallskip}\hline\noalign{\smallskip}
$M_{23}$ & $(2A,2A,3A,5A)$ & 980 & 119 & 0.1214\\
$M_{23}$ & $(2A,2A,8A,8A)$ & 198,488 & 41,277 & 0.2080\\
$M_{23}$ & $(2A,3A,6A,8A)$ & 2,040,294 & 587,348 & 0.2879\\
$M_{23}$ & $(2A,4A,4A,8A)$ & 4,075,896 & 1,191,571 & 0.2923\\
$M_{23}$ & $(2A,4A,5A,6A)$ & 5,742,480 & 1,709,427 & 0.2977\\
$M_{23}$ & $(2A,4A,5A,8A)$ & 8,779,060 & 2,630,191 & 0.2996\\
$M_{23}$ & $(2A,4A,6A,6A)$ & 7,814,940 & 2,405,266 & 0.3078\\
$M_{23}$ & $(2A,4A,6A,8A)$ & 11,962,894 & 3,697,305 & 0.3091\\
\noalign{\smallskip}\hline
\end{tabular}
\end{table}
With increasing orbit size $\vert Z \vert$, the ratio $\frac{g_Z}{\vert Z \vert}$ gets closer to $l_{m, G}$.
Thus we may state
\begin{conjecture}
\label{LimitGenus} 
Let $G$ be a finite group with $Z(G)=I$, then
$$\lim\limits_{\vert Z \vert \to\infty} \frac{g_Z}{\vert Z \vert} = l_{m, G}$$
and $g_Z$ increases linearly with $ \vert Z \vert $.
\end{conjecture}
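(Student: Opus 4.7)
The plan is to reduce the conjecture to an equidistribution statement for certain products of the entries of $\underline{\sigma}$ as $[\underline{\sigma}]$ ranges over large braid orbits. Writing $g_Z = 1 + \tfrac{m-3}{2}\cdot\vert Z\vert - \tfrac{1}{2}\sum_{j=2}^m z_{1j}$ and dividing by $\vert Z\vert$, one sees immediately that the claim $g_Z/\vert Z\vert \to l_{m,G}$ is equivalent to
$$\frac{1}{\vert Z\vert}\sum_{j=2}^m z_{1j} \longrightarrow \frac{m-1}{{\bar o}_G}.$$
It suffices --- and is the content of the paper's heuristic --- to establish, for each $j\in\{2,\dots,m\}$, that the cycle-weighted mean cycle length of $\rho_m(\beta_{1j})$ on $Z$ satisfies $\vert Z\vert/z_{1j}\to {\bar o}_G$.

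For the second step I would invoke the explicit formula for $\beta_{1j}^k$ from Proposition~\ref{ActionOfPureHurwitzGroup} and its natural extension to arbitrary $m$: the braid $\beta_{1j}$ acts on $[\underline{\sigma}]$ by conjugating positions $1$ and $j$ by a specific product $c_j(\underline{\sigma})$ of two of the remaining entries (for $m=4$ the relevant products are $\sigma_3\sigma_4$, $\sigma_4\sigma_2$ and $\sigma_2\sigma_3$). Since $c_j$ is constant along the $\beta_{1j}$-cycle of $[\underline{\sigma}]$, Corollary~\ref{ActionOfPureHurwitzGroupByConjugation} combined with $Z(G)=I$ gives $\ell_{1j}([\underline{\sigma}])\mid o(c_j(\underline{\sigma}))$, with equality in the generic case (failures occurring precisely when a proper power of $c_j$ already commutes with both $\sigma_1$ and $\sigma_j$). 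Thus the cycle-weighted mean cycle length equals the mean of $o(c_j)$ over $\beta_{1j}$-cycles, and the conjecture amounts to
$$\frac{1}{z_{1j}}\sum_{\text{cycles }\mathcal{C}} o(c_j(\mathcal{C})) \longrightarrow \frac{1}{\vert G\vert}\sum_{g\in G} o(g) = {\bar o}_G.$$

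The heart of the proof, and the main obstacle, is this equidistribution claim: the order statistics of the multiset $\{c_j(\mathcal{C}) : \mathcal{C}\text{ a }\beta_{1j}\text{-cycle on }Z\}$ should converge, as $\vert Z\vert\to\infty$, to the uniform order statistics on $G$. Two qualitative points must be addressed. First, $c_j(\underline{\sigma})$ is constrained to lie in a class-product set of the form $(C_iC_k)^{-1}$, so the equidistribution is really over this set; one needs that such class products become close to uniform on $G$, which is a standard expectation from character theory when class sizes grow but is not a formal theorem. Second, small rigid orbits such as those of length $2$, $4$, $6$ and $40$ constructed in Sections~\ref{SectionFixedPointsAndOrbitsOfSize2}--\ref{SectionSmallOrbitsInDimension6} flagrantly violate the equidistribution, so the conjecture implicitly requires that such exceptional orbits occupy a vanishing fraction of $\Sigma^i(C_1,\dots,C_m)$ as the class sizes grow. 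A serious approach would view $\rho_m(\beta_{1j})$ as a transfer operator on the ambient Nielsen space and import mixing estimates in the spirit of the Ellenberg--Venkatesh--Westerland analysis of Hurwitz spaces; this would yield not only the limit but an error term strong enough to make rigorous the linear growth $g_Z = l_{m,G}\cdot\vert Z\vert + o(\vert Z\vert)$ asserted in the second half of the conjecture. I do not see how to bypass such deep arithmetic-geometric input, and I expect that any purely combinatorial proof would require new invariants of the kind constructed in Section~\ref{SectionInvariantsUnderTheActionOfBraids} but strong enough to uniformly suppress all exceptional orbits at once.
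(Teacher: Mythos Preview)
The statement you are attempting to prove is labelled a \emph{Conjecture} in the paper, and the paper offers no proof of it. The author's justification is purely heuristic: he writes down the Riemann--Hurwitz-type formula for $g_Z$, makes the \emph{assumption} $\lim_{|Z|\to\infty} z_{1j} = |Z|/{\bar o}_G$ without further argument, computes the resulting limit $l_{m,G}$, and then points to numerical tables as supporting evidence. That is the entirety of the paper's treatment.

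Your proposal goes considerably further than the paper in analysing \emph{why} one should expect $|Z|/z_{1j}\to{\bar o}_G$: you correctly identify via Proposition~\ref{ActionOfPureHurwitzGroup} and Corollary~\ref{ActionOfPureHurwitzGroupByConjugation} that the $\beta_{1j}$-cycle lengths are controlled by orders of the products $c_j(\underline{\sigma})$, and you reduce the conjecture to an equidistribution statement for these orders over large orbits. This is a genuine sharpening of the paper's heuristic, not present there. You are also right that the small generic orbits of Sections~\ref{SectionFixedPointsAndOrbitsOfSize2}--\ref{SectionSmallOrbitsInDimension6} show the equidistribution fails for individual orbits and must be interpreted asymptotically, and that any rigorous proof would require substantial input of the Ellenberg--Venkatesh--Westerland type. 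Your honest conclusion that you ``do not see how to bypass such deep arithmetic-geometric input'' is the correct assessment: neither does the paper, which is precisely why the statement is recorded as a conjecture rather than a theorem. In short, there is no proof in the paper to compare against; your proposal is a more detailed heuristic than the author's, with the obstacles to a genuine proof clearly named.
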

\noindent
The linear growth of $g_Z$ tells us to look for small $B_m$-orbits $Z \subseteq \Sigma^i(C_1,\dots, C_m)$.
\par
\noindent
If $B_m$ acts transitively on $\Sigma^i(C_1,\dots, C_m)$, we have a single $B_m$-orbit $Z$
with $\vert Z \vert = l^i(C_1,\dots, C_m)$.
This motivates to look at the growth of $l^i(C_1,\dots, C_m)$ for increasing dimension $m$.
As there are no non-trivial class vectors with $m=1$ and class vectors with $m=2$ lead to cyclic groups $G$,
we always assume $m \geq 3$.
\begin{proposition}
\label{LowerBoundLiC} 
Let $G$ be a finite group with $Z(G) = I$, $(C_1,\dots, C_m)$ a class vector of $G$
with $m \geq 6$ and $3 \leq n \leq m-3$, then
$$l^i(C_1,\dots, C_m) \geq \vert G \vert \cdot l^i(C_1,\dots, C_n) \cdot l^i(C_{n+1},\dots, C_m).$$
\end{proposition}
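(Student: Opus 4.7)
The plan is to obtain the bound by a direct counting argument at the level of (non-quotiented) generating systems, then descend to $\mathrm{Inn}(G)$-classes using the fact that $Z(G)=I$.

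First, I would define the concatenation map
\[
\Phi \colon \Sigma(C_1,\dots,C_n) \times \Sigma(C_{n+1},\dots,C_m) \longrightarrow \Sigma(C_1,\dots,C_m)
\]
sending $\bigl((\sigma_1,\dots,\sigma_n),(\tau_1,\dots,\tau_{m-n})\bigr)$ to $(\sigma_1,\dots,\sigma_n,\tau_1,\dots,\tau_{m-n})$. The image really lands in $\Sigma(C_1,\dots,C_m)$: the class constraints are obviously matched, the product telescopes as $\iota \cdot \iota = \iota$ because each input is a generating system, and the concatenation generates $G$ since already either half does (this is where $n \geq 3$ and $m-n \geq 3$ are used, so that both halves are honest class vectors with $\sigma_1\cdots\sigma_n = \iota$ and $\tau_1\cdots\tau_{m-n} = \iota$). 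The map $\Phi$ is visibly injective, hence
\[
\lvert\Sigma(C_1,\dots,C_m)\rvert \; \geq \; \lvert\Sigma(C_1,\dots,C_n)\rvert \cdot \lvert\Sigma(C_{n+1},\dots,C_m)\rvert.
\]

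The second step is to pass to $\mathrm{Inn}(G)$-orbits. Since $Z(G) = I$, the simultaneous conjugation action of $\mathrm{Inn}(G) \cong G$ on any set of generating systems is free: if $g \in G$ centralizes every $\sigma_i$, then it centralizes $\langle \sigma_1,\dots,\sigma_k\rangle = G$, so $g \in Z(G) = I$. Therefore each $\mathrm{Inn}(G)$-orbit on such a set has exactly $\lvert G\rvert$ elements, and
\[
\lvert\Sigma(C_1,\dots,C_k)\rvert = \lvert G\rvert \cdot l^i(C_1,\dots,C_k)
\]
for $k \in \{n,\, m-n,\, m\}$. Substituting these three identities into the inequality above and dividing by $\lvert G\rvert$ yields exactly
\[
l^i(C_1,\dots,C_m) \; \geq \; \lvert G\rvert \cdot l^i(C_1,\dots,C_n) \cdot l^i(C_{n+1},\dots,C_m),
\]
which is the claim.

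There is no serious obstacle in this proof; the only point that needs care is verifying that the constructed concatenation is genuinely a \emph{generating} $m$-system with product $\iota$, which is why the hypotheses $m \geq 6$ and $3 \leq n \leq m-3$ appear (they guarantee both halves are valid generating systems in the sense of $\Sigma_n(G)$ and $\Sigma_{m-n}(G)$). Everything else is the freeness argument, which is immediate from $Z(G) = I$ combined with the defining generation property of elements of $\Sigma$. Note also that the bound is trivially true if either smaller $l^i$ vanishes, so the substantive content is only the case where both factors are positive.
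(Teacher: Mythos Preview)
Your proof is correct and follows essentially the same concatenation idea as the paper. The paper works directly at the level of $\Sigma^i$-classes: it fixes class representatives $[\underline{\sigma}]\in\Sigma^i(C_1,\dots,C_n)$ and $[\underline{\tau}]\in\Sigma^i(C_{n+1},\dots,C_m)$ and forms the $\lvert G\rvert$ classes $[\sigma_1,\dots,\sigma_n,\tau_{n+1}^{\kappa},\dots,\tau_m^{\kappa}]$ for $\kappa\in G$, then asserts these are pairwise distinct and that the resulting sets over all pairs are disjoint. Your version instead lifts to the non-quotiented sets $\Sigma(\cdot)$, where injectivity of concatenation is immediate, and then converts via $\lvert\Sigma(\cdot)\rvert=\lvert G\rvert\cdot l^i(\cdot)$. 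This is a cleaner packaging: the freeness argument from $Z(G)=I$ is made explicit in one place, whereas in the paper's presentation both the claim $\lvert W_{[\underline{\sigma}],[\underline{\tau}]}\rvert=\lvert G\rvert$ and the disjointness of the union implicitly rely on the same freeness reasoning without spelling it out. The two arguments are, however, the same computation viewed from opposite ends of the quotient map.
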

\begin{proof}
Without restriction we may assume $l^i(C_1,\dots, C_n) \cdot l^i(C_{n+1},\dots, C_m) > 0$.
For $[\underline{\sigma}] \in S = \Sigma^i(C_1,\dots, C_n)$ and $[\underline{\tau}] \in T = \Sigma^i(C_{n+1},\dots, C_m)$
we have 
$$[\underline{\sigma}, \underline{\tau}] = 
[\sigma_1^{\gamma}, \dots, \sigma_n^{\gamma}, \tau_{n+1}^{\eta}, \dots, \tau_m^{\eta}] \in \Sigma^i(C_1,\dots, C_m)$$
for all $\gamma \in G$ and all $\eta \in G$.
Setting $\underline{\omega}_{\kappa} = [\sigma_1, \dots, \sigma_n, \tau_{n+1}^{\kappa}, \dots, \tau_m^{\kappa}]$
with $\kappa \in G$ and
$$W_{[\underline{\sigma}], [\underline{\tau}]} = \{ [\underline{\omega}_{\kappa}] \mid \kappa \in G \} \subseteq \Sigma^i(C_1,\dots, C_m),$$
we get $\vert W_{[\underline{\sigma}], [\underline{\tau}]} \vert = \vert G \vert$. The set
$$Y_{S,T} = \bigcup_{[\underline{\sigma}] \in S,[\underline{\tau}] \in T} W_{[\underline{\sigma}], [\underline{\tau}]} 
\subseteq \Sigma^i(C_1,\dots, C_m)$$ is a disjoint union and has therefore cardinality
$$\vert Y_{S,T} \vert = \sum_{[\underline{\sigma}] \in S,[\underline{\tau}] \in T} W_{[\underline{\sigma}], [\underline{\tau}]} =
\sum_{[\underline{\sigma}] \in S,[\underline{\tau}] \in T} \vert G \vert = 
\vert G \vert \cdot \vert S \vert \cdot \vert T \vert $$
\par
\noindent
Finally, we get
$$l^i(C_1,\dots, C_m) \geq \vert Y_{S,T} \vert = \vert G \vert \cdot l^i(C_1,\dots, C_n) \cdot l^i(C_{n+1},\dots, C_m).$$
\end{proof}
\noindent
Considering all non-trivial splittings of $(C_1,\dots, C_m)$ and using \cite[I, Corollary 5.6]{RefMM2018} leads to
\begin{corollary}
\label{SumOfLowerBoundLiC} 
Let $G$ be a finite group with $Z(G) = I$, $(C_1,\dots, C_m)$ a class vector of $G$
with $m \geq 6$ and $3 \leq n \leq m-3$, then
$$\lfloor n(C_1,\dots, C_m) \rfloor \geq
l^i(C_1,\dots, C_m) \geq \vert G \vert \cdot \sum_{n=3}^{m-3} l^i(C_1,\dots, C_n) \cdot l^i(C_{n+1},\dots, C_m).$$
\end{corollary}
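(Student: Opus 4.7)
The plan is to combine the upper bound, which is immediate from the cited result, with an iterated application of Proposition~\ref{LowerBoundLiC} over all admissible bipartitions of the class vector. Concretely, the inequality $l^i(C_1,\dots,C_m) \leq \lfloor n(C_1,\dots,C_m)\rfloor$ is already recorded in the paragraph defining $n(C_1,\dots,C_m)$ and follows from \cite[I, Corollary 5.6]{RefMM2018}; no further work is needed for the left half.

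For the right half, I would run Proposition~\ref{LowerBoundLiC} once for each $n\in\{3,4,\dots,m-3\}$. Each such application yields a subset $Y_n\subseteq \Sigma^i(C_1,\dots,C_m)$ built by pasting, namely
$$Y_n \;=\; \bigcup_{[\underline\sigma]\in\Sigma^i(C_1,\dots,C_n),\;[\underline\tau]\in\Sigma^i(C_{n+1},\dots,C_m)} W_{[\underline\sigma],[\underline\tau]},$$
of cardinality $|G|\cdot l^i(C_1,\dots,C_n)\cdot l^i(C_{n+1},\dots,C_m)$ by the proposition. The desired sum bound then amounts to showing
$$\Bigl|\bigcup_{n=3}^{m-3} Y_n\Bigr| \;=\; \sum_{n=3}^{m-3} |Y_n|,$$
i.e.\ that the family $\{Y_n\}_{n=3}^{m-3}$ is pairwise disjoint inside $\Sigma^i(C_1,\dots,C_m)$.

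The main obstacle is precisely this disjointness step. The natural attempt is to observe that any $[\underline\omega]\in Y_n$ admits a representative with $\omega_1\cdots\omega_n=\iota$, and since $Z(G)=I$, this partial-product relation is actually a well-defined invariant of the $\mathrm{Inn}(G)$-class. If $[\underline\omega]\in Y_n\cap Y_{n'}$ with $n<n'$, one deduces the extra relation $\omega_{n+1}\cdots\omega_{n'}=\iota$, which is a nontrivial subword relation inside the generating $(m-n)$-system $(\omega_{n+1},\dots,\omega_m)$. I would aim to exclude such a relation by combining the condition $\langle \omega_{n+1},\dots,\omega_m\rangle=G$ with the fact that $(\omega_{n+1},\dots,\omega_{n'})$ and $(\omega_{n'+1},\dots,\omega_m)$ would then each have product~$\iota$ and generate subgroups whose join is $G$, together with the choice of class vector constraints; however the argument does not go through in full generality for every $G$ and every $(C_1,\dots,C_m)$, and a careful genericity hypothesis may be required (in the heuristic spirit of Section~\ref{SectionHeuristicsForSearching}).

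Once the pairwise disjointness is established, the corollary falls out immediately: $l^i(C_1,\dots,C_m)=|\Sigma^i(C_1,\dots,C_m)|\geq |\bigcup_n Y_n|=\sum_n|Y_n|$, giving the claimed lower bound, while the upper bound from \cite[I, Corollary 5.6]{RefMM2018} closes the chain of inequalities.
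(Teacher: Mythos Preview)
Your approach is precisely what the paper has in mind: the paper offers no proof for this corollary beyond the single sentence ``Considering all non-trivial splittings of $(C_1,\dots, C_m)$ and using \cite[I, Corollary 5.6]{RefMM2018} leads to\ldots''. So there is nothing more detailed in the paper to compare your argument against; the upper bound and the idea of taking the union of the $Y_n$ over all $n$ is all the paper provides.

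Your suspicion about the disjointness step is correct, and it is not merely a technical nuisance: the sets $Y_n$ are \emph{not} pairwise disjoint in general. Here is a concrete obstruction. Take $m=8$ and any class vector with $C_5=C_4^{-1}$ and with $l^i(C_1,C_2,C_3)>0$, $l^i(C_6,C_7,C_8)>0$. Choose $[\omega_1,\omega_2,\omega_3]\in\Sigma^i(C_1,C_2,C_3)$, any $\omega_4\in C_4$, set $\omega_5=\omega_4^{-1}$, and choose $[\omega_6,\omega_7,\omega_8]\in\Sigma^i(C_6,C_7,C_8)$. Then $\omega_1\omega_2\omega_3=\iota$ and $\langle\omega_4,\dots,\omega_8\rangle\supseteq\langle\omega_6,\omega_7,\omega_8\rangle=G$, so $[\underline\omega]\in Y_3$; but equally $\omega_1\cdots\omega_5=\iota$ and $\langle\omega_1,\dots,\omega_5\rangle\supseteq\langle\omega_1,\omega_2,\omega_3\rangle=G$, so $[\underline\omega]\in Y_5$. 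Hence $Y_3\cap Y_5\neq\emptyset$, and the sum of the $|Y_n|$ can genuinely exceed $l^i(C_1,\dots,C_m)$.

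In other words, you have located a real gap: the lower bound with the \emph{sum} does not follow from Proposition~\ref{LowerBoundLiC} alone, and the paper's one-line justification is insufficient. What does follow without further hypotheses is the weaker statement with $\max_{3\le n\le m-3}$ in place of $\sum_{n=3}^{m-3}$, which is just Proposition~\ref{LowerBoundLiC} applied once at the optimal $n$.
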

\noindent
Using Proposition~\ref{LowerBoundLiC} we get
\begin{proposition}
\label{LiCNotZero} 
Let $G$ be a finite group with $Z(G) = I$ and
$l^i(D_1,\dots, D_n) \geq 1$ for all class vectors $(D_1,\dots, D_n)$ of $G$ of length $n$ 
with $n \in \{ n_0, n_0+1,\dots, 2 \cdot n_0-1 \}$ for some integer $n_0 \geq 3$, then 
$$l^i(C_1,\dots, C_m) \geq {\vert G \vert}^{k-1}$$
holds for all class vectors of length $m$ with $m=k \cdot n_0 + l$, $l \in \{0,1,\dots, n_0-1 \}$ and $k \geq 2$.
\end{proposition}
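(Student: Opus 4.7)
The plan is to prove Proposition~\ref{LiCNotZero} by induction on $k \geq 2$, using Proposition~\ref{LowerBoundLiC} at each step to peel off a prefix of exactly $n_0$ classes. The splitting point $n = n_0$ is chosen so that the remaining suffix either still lies entirely inside the hypothesis window $\{n_0,\ldots,2n_0-1\}$ (in the base case) or reproduces the inductive situation with one fewer factor of $|G|$ (in the inductive step). Throughout, the technical requirements $m \geq 6$ and $3 \leq n \leq m-3$ of Proposition~\ref{LowerBoundLiC} follow automatically from $n_0 \geq 3$ and $k \geq 2$.

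For the base case $k=2$, I write $m = 2 n_0 + l$ with $l \in \{0,1,\ldots,n_0-1\}$, so $m \geq 2 n_0 \geq 6$. With $n = n_0$ one has $m - n = n_0 + l$ with $3 \leq n_0 \leq m - 3$. The prefix $(C_1,\ldots,C_{n_0})$ has length $n_0$ and the suffix $(C_{n_0+1},\ldots,C_m)$ has length $n_0 + l \leq 2 n_0 - 1$; both lengths lie in $\{n_0,\ldots,2n_0-1\}$, so the hypothesis gives $l^i \geq 1$ for each. Proposition~\ref{LowerBoundLiC} then yields
$$l^i(C_1,\ldots,C_m) \geq |G| \cdot 1 \cdot 1 = |G|^{k-1}.$$

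For the inductive step $k \to k+1$, set $m = (k+1) n_0 + l$, so $m \geq 3 n_0 \geq 9$, and again split at $n = n_0$. The suffix then has length $m - n_0 = k n_0 + l$, which has precisely the shape covered by the inductive hypothesis at level $k$ and is therefore bounded below by $|G|^{k-1}$; the prefix of length $n_0$ contributes at least $1$ by the standing hypothesis. Proposition~\ref{LowerBoundLiC} gives
$$l^i(C_1,\ldots,C_m) \geq |G| \cdot 1 \cdot |G|^{k-1} = |G|^{k},$$
which is $|G|^{(k+1)-1}$, closing the induction. The only real choice in the argument is the splitting width, and there is no genuine obstacle: the hypothesis window $\{n_0,\ldots,2n_0-1\}$ has width $n_0$, which is exactly large enough to absorb any residue $l \in \{0,\ldots,n_0-1\}$ in the base case, and peeling off exactly $n_0$ classes preserves the residue $l$ so that the inductive hypothesis applies unchanged.
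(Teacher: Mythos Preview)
Your proof is correct and follows essentially the same strategy as the paper: iterate Proposition~\ref{LowerBoundLiC} to peel off blocks of length $n_0$, using the hypothesis window $\{n_0,\dots,2n_0-1\}$ to handle the residue $l$. The only organizational difference is that the paper first treats the case $l=0$ by repeated splitting and then reduces the general case to it via an initial split of width $n_0+l$, whereas you run a single clean induction on $k$, always splitting off exactly $n_0$ and absorbing the residue in the base case; both arrangements yield the same chain of inequalities.
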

\begin{proof}
Let $m=k \cdot n_0$ with $k \geq 2$ and choose $n=n_0$. Now we can apply Proposition~\ref{LowerBoundLiC} and get
$$l^i(C_1,\dots, C_{k \cdot n_0}) \geq
\vert G \vert \cdot l^i(C_1,\dots, C_{n_0}) \cdot l^i(C_{n_0 + 1},\dots, C_{k \cdot n_0}) \geq$$
$$\vert G^2 \vert \cdot l^i(C_1,\dots, C_{n_0}) \cdot l^i(C_{n_0 + 1},\dots, C_{2 \cdot n_0}) 
\cdot l^i(C_{2 \cdot n_0 + 1},\dots, C_{k \cdot n_0}) \geq ...$$
After $k-1$ steps we get
$$l^i(C_1,\dots, C_{k \cdot n_0}) \geq
{\vert G \vert}^{k-1} \prod_{l=0}^{k-1} {l^i(C_{l \cdot n_0 + 1},\dots, C_{(l+1) \cdot n_0})}$$
with
$l^i(C_{l \cdot n_0 + 1},\dots, C_{(l+1) \cdot n_0}) \geq 1$ for $l=0, \dots, k-1$ by assumption.
\par
\noindent
For $m=k \cdot n_0 + l$ with $l \in \{0,1,\dots, n_0-1 \}$ and $k \geq 2$, choose $n=n_0 + l$. Again, we can
apply Proposition~\ref{LowerBoundLiC} and get $l^i(C_1,\dots, C_{k \cdot n_0 + l}) \geq {\vert G \vert}^{k-1}$
because of
$$l^i(C_1,\dots, C_{k \cdot n_0 + l}) \geq
\vert G \vert \cdot l^i(C_1,\dots, C_{n_0 + l}) \cdot l^i(C_{n_0 + l + 1},\dots, C_{k \cdot n_0 + l})$$
and 
$$l(C_{n_0 + l + 1},\dots, C_{k \cdot n_0 + l}) = k \cdot n_0 + l - (n_0 + l + 1) + 1 = (k-1) \cdot n_0.$$
Here $l(...)$ means the length of a class vector.
\end{proof}
\noindent
Note that Proposition~\ref{LiCNotZero} can only apply to simple groups.
If $G$ is a finite group with a proper normal subgroup $N \vartriangleleft G$, we can choose
a non-trivial class $C$ of $G$ with $C \subseteq N$ and get
$l^i(C, \dots, C) = 0$ for all class vectors $(C, \dots, C)$ of $G$ containing only class $C$.
This was pointed out by G. Malle.
\par
\noindent
If it exists, we have $n_0 \geq 6$ for $M_{23}$ due to $l^i(2A,2A,2A,2A,2A)=0$.
\begin{corollary}
\label{LiCInfinite} 
With the assumptions of Proposition~\ref{LiCNotZero}, the set of class vectors
$(C_1,\dots, C_m)$ with $l^i(C_1,\dots, C_m)=0$ is finite
and
$$\lim\limits_{m \to\infty} l^i(C_1,\dots, C_m) = \infty$$
holds.
\end{corollary}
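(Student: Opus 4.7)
The plan is to derive both assertions directly from Proposition~\ref{LiCNotZero} with essentially no additional work. Under its hypotheses, for every $m \geq 2 n_0$ I can write $m = k \cdot n_0 + l$ with $l \in \{0, 1, \dots, n_0 - 1\}$ and $k \geq 2$, and the proposition then yields
\[
l^i(C_1,\dots, C_m) \;\geq\; \vert G \vert^{k-1} \;\geq\; \vert G \vert \;\geq\; 1
\]
uniformly for every class vector $(C_1,\dots, C_m)$ of length $m$. Consequently, any class vector with $l^i(C_1,\dots, C_m) = 0$ must have length strictly less than $2 n_0$.

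For the finiteness claim I would exploit that $G$ has only finitely many conjugacy classes, say $r$ of them. Then the number of class vectors of length $m$ is at most $r^m$, so the total number of class vectors of length less than $2 n_0$ is bounded by $\sum_{m=1}^{2 n_0 - 1} r^m$, a finite quantity. Since the set of class vectors with $l^i = 0$ is contained in this finite collection by the previous paragraph, finiteness follows immediately.

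For the limit statement, observe that writing $m = k n_0 + l$ with $0 \leq l < n_0$ forces $k \geq (m - n_0 + 1)/n_0$, so $k \to \infty$ as $m \to \infty$. Applying Proposition~\ref{LiCNotZero} uniformly in the class vector of length $m$, I get
\[
\min_{(C_1,\dots, C_m)} l^i(C_1,\dots, C_m) \;\geq\; \vert G \vert^{k-1} \;\longrightarrow\; \infty,
\]
provided $\vert G \vert \geq 2$, which is the interesting case (otherwise the hypothesis $Z(G) = I$ with $l^i(D_1,\dots,D_{n_0}) \geq 1$ for some nontrivial class vector is vacuous). This is exactly the assertion $\lim_{m\to\infty} l^i(C_1,\dots, C_m) = \infty$, read as a uniform lower bound over all class vectors of length $m$.

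No step here is a genuine obstacle; the whole corollary is essentially bookkeeping on top of Proposition~\ref{LiCNotZero}. The only point that deserves an explicit remark is the interpretation of the limit: since $l^i$ depends on the class vector and not only on its length $m$, the meaningful reading is that the minimum of $l^i(C_1,\dots, C_m)$ over all class vectors of length $m$ tends to infinity, and this is what the uniform lower bound $\vert G \vert^{k-1}$ supplies.
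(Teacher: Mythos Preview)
Your proposal is correct and is precisely the argument the paper has in mind; in fact the paper states Corollary~\ref{LiCInfinite} without proof, treating it as immediate from Proposition~\ref{LiCNotZero}, and your writeup just makes that deduction explicit. One minor sharpening: by the hypothesis of Proposition~\ref{LiCNotZero} you already know $l^i \geq 1$ for all lengths $n_0 \leq m \leq 2n_0-1$, so any class vector with $l^i=0$ must actually have length $< n_0$, not merely $< 2n_0$ --- but your weaker bound is of course enough for finiteness.
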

\noindent
Combining Conjecture~\ref{LimitGenus} and Corollary~\ref{LiCInfinite}, we get 
$$\lim\limits_{m \to\infty} g_Z = \infty$$
for class vectors of length $m$ with transitive $B_m$-action of simple groups satisfying
the assumptions of Proposition~\ref{LiCNotZero}. Therefore the key to small values of $g_Z$
are class vectors of length $m$ with intransitive $B_m$-action.
\par
\noindent
Braid orbit computations for the groups $A_5$, $A_6$ in \cite{RefJam2013},
$L_2(7)$ and $L_2(11)$ in \cite{RefFir2015} and
$A_7$, $A_8$,
$L_2(8)$, $L_2(13)$, $L_2(16)$, $L_3(3)$, $L_3(4)$, $L_3(5)$,
$M_{11}$, $M_{12}$, $M_{22}$, $M_{23}$, $M_{24}$,
$S_5$, $S_6$, $S_7$, $S_8$,
$SO_5(3)$, $S_6(2)$, 
$Sz(8)$, $Sz(32)$
by the author let us expect small braid orbits especially in symmetric class vectors.
Looking at examples shows that symmetric class vectors indeed often
lead to a splitting of $\Sigma^i(C,\dots, C)$ into multiple $B_m$-orbits
while $B_m$ acts transitively in case of non-symmetric class vectors with only few exceptions.
For $m=4$, small rigid $B_4$-orbits occur that cannot be explained by Theorem~\ref{BraidOrbitsOfSize2},
Theorem~\ref{BraidOrbitsOfSize6}, Theorem~\ref{TranslationOfBraidOrbits} or Theorem~\ref{BraidOrbitsOfSize4}.
This is what we are looking for in section \ref{SectionClassVectorsOfDimension4InM23}.
\par
\noindent
In table \ref{tab:Data_SmallRigidB4Orbits} we give examples of such small rigid $B_4$-orbits.
All $B_4$-orbits $Z$ in these examples have the following properties:
\begin{itemize}
\item[]
$\vert Z \vert = k \cdot l, k \in \{1, 2, 3\}, l \in \{120, 144\}$,
\item[]
$\vert Z^{sy} \vert = \vert Z \vert$,
\item[]
$F_{B_4}(F_i,[\underline{\sigma}])=-1, [\underline{\sigma}] \in Z, i=1,2,3$,
\item[]
$\rho^t_4(\eta_{23}) = (2)^{\vert Z^{sy} \vert /2}$,
\item[]
$\rho^t_4(\eta_{234}) = (3)^{\vert Z^{sy} \vert /3}$
\end{itemize}
with the exception of $M_{11}$, $(3A,3A,3A,3A)$ with $\rho^t_4(\eta_{23}) = (1)^{12}(2)^{138}$.
Note that all $B_4$-orbits $Z$ with $\vert Z \vert = 120$ in table \ref{tab:Data_SmallRigidB4Orbits}
have $g_Z = 7$ and $g_{Z^{sy}} = 0$.
\begin{table}[H]
\centering
\footnotesize
\captionsetup{font=footnotesize}
\caption{Some small rigid $B_4$-orbits}
\label{tab:Data_SmallRigidB4Orbits}
\begin{tabular}{lllrl}
\hline\noalign{\smallskip}
$G$ & $C$ & $ \vert Z \vert $      & $g_Z$             & $\rho^t_4(\beta_{1j})$\\
    &     & $ \vert Z^{sy} \vert $ & $g_{Z^{sy}}$ & $\rho^t_4(\beta_3)$\\
\noalign{\smallskip}\hline\noalign{\smallskip}
$A_7$&$(7A_4)$& 120 & 7 & $(1)^8 (2)^{12} (4)^8 (7)^8$\\
     &        & 120 & 0 & $(1)^4 (2)^2 (4)^6 (7)^4 (8)^4 (14)^2$\\
$S_6$&$(6A_4)$& 120 & 7 & $(1)^4 (2)^{12} (4)^8 (5)^{12}$\\
     &        & 120 & 0 & $(1)^2 (2) (4)^6 (5)^6 (8)^4 (10)^3 $\\
$S_8$&$(4D_4)$& 120 & 7 & $(1)^{12} (2)^{12} (7)^{12} $\\
     &        & 120 & 0 & $(1)^2 (2)^5 (4)^6 (7)^6 (14)^3$\\
$L_3(4)$&$(5A_4)$& 240 & 31 & $(2)^{12} (4)^{24} (5)^{24}$\\
        &        & 240 &  3 & $(4)^6 (5)^{12} (8)^{12} (10)^6$\\
$SO_5(3)$&$(4D_4)$& 120 & 7 & $(1)^8 (2)^8 (3)^4 (5)^{12} (6)^4$\\
         &        & 120 & 0 & $(2)^4 (3)^2 (4)^4 (5)^6 (6) (10)^3 (12)^2$\\
$SO_5(3)$&$(6H_4)$& 144 & 19 & $(2)^8 (3)^8 (5)^{16} (6)^4$\\
         &        & 144 &  2 & $(4)^4 (5)^8 (6)^4 (10)^4 (12)^2$\\
$SO_5(3)$&$(6H_4)$& 432 & 25 & $(1)^4 (2)^{32} (3)^{36} (4)^{24} (5)^{32}$\\
         &        & 432 &  1 & $(1)^2 (2) (3)^6 (4)^{16} (5)^8 (6)^{15} (8)^{12} (10)^{12} $\\
$S_6(2)$&$(3C_4)$& 288 & 43 & $(1)^8 (2)^4 (3)^{16} (4)^{16} (6)^8 (7)^{16}$\\
        &        & 288 &  4 & $(2)^4 (3)^8 (4)^2 (6)^4 (7)^8 (8)^8 (12)^4 (14)^4$\\
$S_6(2)$&$(4D_4)$& 360 & 79 & $(1)^4 (2)^{12} (3)^8 (4)^8 (7)^{24} (9)^{12} $\\
        &        & 360 &  8 & $(1)^2 (2) (3)^4 (4)^6 (6)^2 (7)^{12} (8)^4 (9)^6 (14)^6 (18)^3 $\\
$M_{11}$&$(3A_4)$& 288 & 49 & $(1)^4 (2)^8 (3)^8 (5)^{36} (8)^8 $\\
        &        & 288 &  3 & $(1)^2 (2) (3)^4 (4)^4 (5)^6 (6)^2 (10)^{15} (16)^4 $\\
\noalign{\smallskip}\hline
\end{tabular}
\end{table}

\section{The permutation group $\rho_4(B_4)$}\label{SectionTheStructureOfThePermutationGroupRho4B4}

Using GAP \cite{RefGAP}, we tried to compute some properties of the permutation groups $\rho_4(B_4)$ for the
examples in table \ref{tab:Data_SmallRigidB4Orbits}. The results can be seen in table \ref{tab:Data_SmallRigidB4OrbitsAction}.
For the structure descriptions in the last column we use GAP-notation.
Computing more examples for symmetric and non-symmetric class vectors gives rise to
\begin{conjecture}
\label{StructureOfRho4B4}
Let $G > I$ be a finite group with $Z(G)=I$, $(C_1,C_2,C_3,C_4)$ a class vector of length $4$ of $G$,
$Z \subseteq \Sigma^i_4(C_1,C_2,C_3,C_4)$ a $B_4$-orbit with $q = \vert Z \vert > 0$ and
$\rho_4: B_4 \rightarrow S_q$ 
the induced permutation representation of the action of $B_4$ on $Z$, then
$$\rho_4(B_4) \cong C_{n_l}^{k_l} \rtimes (\cdots \rtimes (C_{n_2}^{k_2} 
\rtimes (C_{n_1}^{k_1} \rtimes G_m))\cdots) \leq S_q$$
with $G_m \in \{ A_m, S_m \}$, $q \mid m \cdot n_1 \cdot \dots \cdot n_l$, $m \leq q$,
$l \in \mathbb{N}_0$, $n_1, \dots, n_l \in \{ 2,3 \}$,
$k_1, \dots, k_l \in \mathbb{N}$ and $k_1 = m-1$.
\end{conjecture}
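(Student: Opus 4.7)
The plan is to analyze the structure of $\rho_4(B_4)$ as an iterated extension above an almost-simple base. By Corollary~\ref{ActionOfPureHurwitzGroupByBeta1jCase4} it suffices to work with the two generators $\rho_4(\beta_{12})$ and $\rho_4(\beta_{13})$, since $\rho_4(\beta_{14})=(\rho_4(\beta_{12})\rho_4(\beta_{13}))^{-1}$. From Proposition~\ref{PropertiesOfPermutationRepresentations}(a) we already have $\rho_4(B_4)\leq A_q$, and by Corollary~\ref{ActionOfPureHurwitzGroupByConjugation} the cycle lengths of the generators are bounded by orders of products $\sigma_i\sigma_j$ in $G$. Since $Z$ is a transitive $B_4$-orbit, the representation is transitive of degree $q=\vert Z\vert$.

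First I would look for a maximal block system of $\rho_4(B_4)$ on $Z$, working down from the full set until one reaches a primitive quotient action of degree $m$ on blocks, giving a surjection $\rho_4(B_4)\twoheadrightarrow G_m\leq S_m$ with $m\leq q$ and $m\mid q$. To show $G_m\in\{A_m,S_m\}$ one would try to exhibit a $3$-cycle or a transposition in $G_m$ coming from a small power of a generator (using the bounded cycle structure), then invoke a classical transitivity-extension argument to conclude $G_m\supseteq A_m$. The divisibility $q\mid m\cdot n_1\cdots n_l$ would then follow by tracking block sizes through the composition series.

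Next I would analyze the kernel $N_1=\ker(\rho_4(B_4)\to G_m)$ and iterate. The claim is that $N_1$ is an elementary abelian $\{2,3\}$-group, specifically $C_{n_1}^{m-1}$ with $n_1\in\{2,3\}$. The exponent $m-1$ should emerge naturally from the single relation $\rho_4(\beta_{12})\rho_4(\beta_{13})\rho_4(\beta_{14})=\iota$, which couples the $m$ blocks by one equation and leaves $m-1$ independent copies of a cyclic factor. For the restriction $n_i\in\{2,3\}$ I would try to exploit two facts already used in the paper: the centre $\langle\Delta_4\rangle$ of $H_4$ has order $2$ (Proposition~\ref{CenterHmActsTrivial}), and by \cite[Theorem 3]{RefGon2008} the maximal finite subgroups of $H_4$ are the binary tetrahedral group $T_1$ (order $24$) and $Dic_{16}$ (order $16$), whose order is divisible only by $2$ and $3$. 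The hope is that stabilizers of points in $Z$ correspond to finite subgroups of $H_4$ modulo $\ker(\rho_4)$, forcing the local structure, and hence the successive quotients, to be $\{2,3\}$-groups.

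The hard part will be making the $\{2,3\}$-restriction rigorous in full generality. A purely combinatorial argument from the braid relations (\ref{eq:BraidRelation1})--(\ref{eq:BraidRelation3}) does not immediately exclude cyclic factors of prime order $5,7,\dots$ in the tower, and the bound from Corollary~\ref{ActionOfPureHurwitzGroupByConjugation} only limits cycle lengths of individual generators, not orders of arbitrary elements in iterated commutator subgroups. I expect one would need either a conceptual embedding of $\rho_4(B_4)$ into a known extension of $A_m$ or $S_m$ by a $\{2,3\}$-group (coming from the mapping class group of the four-punctured sphere, whose finite quotients have been classified), or else an inductive argument on $l$ together with a case analysis over small $m$, in the spirit of the empirical verification tabulated in Table~\ref{tab:Data_SmallRigidB4Orbits}. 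Failing a theoretical tool of this kind, Conjecture~\ref{StructureOfRho4B4} would have to remain supported by computational evidence over an ever-widening range of groups $G$ and class vectors $(C_1,C_2,C_3,C_4)$.
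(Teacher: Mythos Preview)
The statement under discussion is labelled a \emph{Conjecture} in the paper, and the paper does not offer a proof of it. The surrounding text makes this explicit: the conjecture is motivated solely by GAP computations of $\rho_4(B_4)$ for the examples in Table~\ref{tab:Data_SmallRigidB4OrbitsAction} and further unnamed examples, and the two corollaries that follow are conditional on the conjecture being true. So there is no proof in the paper for your proposal to be compared against.

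Your proposal is an honest sketch of how one might try to attack the conjecture, and you correctly identify the central obstruction: nothing in the braid relations or in the cycle-length bounds of Corollary~\ref{ActionOfPureHurwitzGroupByConjugation} forces the successive kernels in an imprimitivity tower to be elementary abelian $\{2,3\}$-groups, nor does it force the primitive base to be $A_m$ or $S_m$ rather than some other primitive group. The appeal to the finite subgroups of $H_4$ from \cite{RefGon2008} is suggestive but does not do the work you want, because point stabilizers in $\rho_4(B_4)$ need not come from finite subgroups of $H_4$ itself---they are images of (typically infinite-index, infinite) subgroups of $B_4$ under $\rho_4$. Likewise, the identification $k_1=m-1$ from the single relation $\rho_4(\beta_{12})\rho_4(\beta_{13})\rho_4(\beta_{14})=\iota$ is heuristic: that relation constrains the product of the three generators globally, not the rank of an abelian kernel in a block system you have not yet shown to exist. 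Your final paragraph already acknowledges all of this and lands exactly where the paper does: absent a new structural tool, the statement remains a conjecture supported by computation.
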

\begin{table}[!htbp]
\centering
\footnotesize
\captionsetup{font=footnotesize}
\caption{Some properties of $\rho_4(B_4)$}
\label{tab:Data_SmallRigidB4OrbitsAction}
\begin{tabular}{lllll}
\hline\noalign{\smallskip}
$G$ & $C$ & $ \vert Z \vert $ & $\vert \rho_4(B_4) \vert$ & Structure $\rho_4(B_4)$\\
\noalign{\smallskip}\hline\noalign{\smallskip}
$A_7$ & $(7A_4)$ & 120 & $2^{57}\cdot 30!$ & $C_2^{29} : (C_2^{29} : A_{30})$\\
$S_6$ & $(6A_4)$ & 120 & $2^{48}\cdot 3^{10}\cdot 10!$ & $C_2^{20} : (C_2^{20} : (C_3^{10} : (C_2^9 : A_{10})))$\\
$S_8$ & $(4D_4)$ & 120 & $2^{48}\cdot 3^{10}\cdot 10!$ & $C_2^{20} : (C_2^{20} : (C_3^{10} : (C_2^9 : A_{10})))$\\
$L_3(4)$ & $(5A_4)$ & 240 & $2^{28}\cdot 3^{10}\cdot 10!$ & $C_2^{10} : (C_2^{10} : (C_3^{10} : (C_2^9 : A_{10})))$\\
$SO_5(3)$ & $(4D_4)$ & 120 & $2^{48}\cdot 3^{10}\cdot 10!$ & $C_2^{20} : (C_2^{20} : (C_3^{10} : (C_2^9 : A_{10})))$\\
$SO_5(3)$ & $(6H_4)$ & 144 & $2^{51}\cdot 9!$ & $?$\\
$SO_5(3)$ & $(6H_4)$ & 432 & $2^{213}\cdot 108!$ & $?$\\
$S_6(2)$ & $(3C_4)$ & 288 & $2^{159}\cdot 18!$ & $?$\\
$S_6(2)$ & $(4D_4)$ & 360 & $2^{148}\cdot 3^{30}\cdot 30!$ & $?$\\
$M_{11}$ & $(3A_4)$ & 288 & $2^{141}\cdot 72!$ & $?$\\
\noalign{\smallskip}\hline
\end{tabular}
\end{table}
\begin{corollary}
\label{StructureOfRho4B4PrimeSize}
If Conjecture~\ref{StructureOfRho4B4} is true and $q \in \mathbb{P}$, then $\rho_4(B_4) \cong G_q$.
\end{corollary}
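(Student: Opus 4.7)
The plan is to combine transitivity of the $B_4$-action on $Z$ with Lagrange's theorem inside $S_q$ to pin down the parameters $m$ and $l$ in the decomposition supplied by Conjecture~\ref{StructureOfRho4B4}. Throughout, write $N:=\prod_{i=1}^{l} n_i^{k_i}$, so that $|\rho_4(B_4)|=|G_m|\cdot N$.

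First I would record two divisibilities. Since $Z$ is a single $B_4$-orbit of cardinality $q$, the image $\rho_4(B_4)$ is a transitive subgroup of $S_q$, and orbit--stabilizer gives $q\mid|\rho_4(B_4)|$. Lagrange inside $S_q$ then gives $|\rho_4(B_4)|\mid q!$, hence $N\mid q!/|G_m|$.

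Next, assuming $q\geq 5$, the conjecture's side condition $q\mid m\cdot n_1\cdots n_l$ together with $n_i\in\{2,3\}$ and the primality of $q$ forces $q\mid m$; combined with $m\leq q$ this yields $m=q$. In particular $G_m\in\{A_q,S_q\}$ and $q!/|G_m|\in\{1,2\}$, so $N\leq 2$. But the conjecture also asserts $k_1=m-1=q-1\geq 4$, whence $N\geq n_1^{k_1}\geq 2^{q-1}\geq 16$ as soon as $l\geq 1$, a contradiction. Therefore $l=0$ and $\rho_4(B_4)\cong G_m=G_q$, as required.

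The two small primes $q\in\{2,3\}$ I would dispose of by direct inspection of transitive subgroups of $S_q$: these are $S_2$, $A_3$, $S_3$, and each is a member of $\{A_q,S_q\}$, so no analysis of the decomposition is needed. The main obstacle is essentially invisible in the write-up: the entire argument rests on the explicit exponent $k_1=m-1$ built into Conjecture~\ref{StructureOfRho4B4}, since this is the only quantitative input strong enough to rule out abelian tails $C_{n_i}^{k_i}$ once $m=q$ is forced. If that clause were weakened, small cyclic factors could hide inside $S_q$ and the corollary would fail.
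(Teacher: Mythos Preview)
Your argument is correct and follows the same route as the paper: handle $q\in\{2,3\}$ by listing the transitive subgroups of $S_q$, and for primes $q\ge 5$ use $q\mid m\,n_1\cdots n_l$ with $n_i\in\{2,3\}$ together with $m\le q$ to force $m=q$. You are more explicit than the paper in the final step, invoking the clause $k_1=m-1$ and Lagrange in $S_q$ to rule out $l\ge 1$, whereas the paper simply asserts that ``$\rho_4(B_4)\cong G_q$ follows'' once $m=q$; your remark that this is exactly where the $k_1=m-1$ hypothesis earns its keep is apt.
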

\begin{proof}
We have $\rho_4(B_4) \cong C_2 \cong S_2$ for $q = 2$. The transitive subgroups of $S_3$ are $A_3 \cong C_3$
and $S_3$ itself, thus $\rho_4(B_4) \cong G_3$ for $q = 3$. If $q$ is a prime with $q > 3$, we get $q \mid m$
and $\rho_4(B_4) \cong G_q$ follows.
\end{proof}
\begin{corollary}
\label{StructureOfRho4B4PrimeSizeSymmetric}
If Conjecture~\ref{StructureOfRho4B4} is true and if $(C,C,C,C)$ is a symmetric class vector with $Z^{H_4} = Z$
and $q \in \mathbb{P}$, then $\rho_4(B_4) \cong A_q$.
\end{corollary}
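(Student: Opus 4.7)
The plan is to combine the dichotomy of Corollary~\ref{StructureOfRho4B4PrimeSize} with the parity constraint from Proposition~\ref{PropertiesOfPermutationRepresentations}(a). Under the standing assumption that Conjecture~\ref{StructureOfRho4B4} holds, Corollary~\ref{StructureOfRho4B4PrimeSize} gives $\rho_4(B_4)\cong G_q$ with $G_q\in\{A_q,S_q\}$ for prime $q=\vert Z\vert$, so the task reduces to ruling out the case $G_q=S_q$.

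To do this, I would exploit the two symmetry hypotheses appearing in the statement. Symmetry of the class vector $(C,C,C,C)$ ensures that all of $H_4$ (and not merely $B_4$) acts on $\Sigma^i(C,C,C,C)^{sy}=\Sigma^i(C,C,C,C)$, while the hypothesis $Z^{H_4}=Z$ then says that the $B_4$-orbit $Z$ is stabilised setwise under this larger action. Consequently $\rho_4$ restricts to a bona fide permutation representation $H_4\rightarrow S_q$ on $Z$ itself, so Proposition~\ref{PropertiesOfPermutationRepresentations}(a) becomes applicable and delivers $\rho_4(B_4)\le A_q$.

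For $q\ge 3$ the subgroup $A_q$ is proper in $S_q$, so the dichotomy $G_q\in\{A_q,S_q\}$ collapses to $G_q=A_q$, which is the claim; the boundary case $q=2$ is consistent with the identification $C_2\cong S_2$ already implicit in the proof of Corollary~\ref{StructureOfRho4B4PrimeSize}. The main point requiring care --- and the only place where both the symmetric hypothesis and the condition $Z^{H_4}=Z$ are genuinely used --- is the step in which Proposition~\ref{PropertiesOfPermutationRepresentations}(a) is invoked: without the symmetric hypothesis, $H_4$ would not act on $\Sigma^i(C_1,\dots,C_4)$ at all, and without $Z^{H_4}=Z$, the restriction of $\rho_4$ to $Z$ would only be a representation of $B_4$, which is not enough for the sign computation underpinning Proposition~\ref{PropertiesOfPermutationRepresentations}(a) to be carried out on $Z$ in isolation. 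Once this applicability is checked, the rest of the argument is purely group-theoretic bookkeeping.
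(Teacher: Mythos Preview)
Your proposal is correct and follows essentially the same route as the paper: invoke Corollary~\ref{StructureOfRho4B4PrimeSize} to reduce to $G_q\in\{A_q,S_q\}$, then use the symmetric hypothesis together with $Z^{H_4}=Z$ to obtain an $H_4$-representation on $Z$ and apply Proposition~\ref{PropertiesOfPermutationRepresentations}(a) to force $\rho_4(B_4)\le A_q$. Your added explanation of why both hypotheses are needed is accurate; note only that the case $q=2$ is in fact vacuous under these hypotheses (a $B_4$-orbit of size $2$ has nontrivial $\rho_4(B_4)$, which would contradict $\rho_4(B_4)\le A_2$), rather than being resolved by the identification $C_2\cong S_2$.
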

\begin{proof}
From Corollary~\ref{StructureOfRho4B4PrimeSize} we get $\rho_4(B_4) \cong G_q$.
Now we have a permutation representation
${\rho_4}^{sy}: H_4 \rightarrow S_q$ with ${\rho_4}^{sy} {\mid}_{B_4} = \rho_4$.
From Proposition~\ref{PropertiesOfPermutationRepresentations}(a) follows $\rho_4(B_4) \leq A_q$
and we finally get $\rho_4(B_4) \cong A_q$.
\end{proof}
\begin{example}
\label{L3_5_31A31A31A31A}
For the class vector $(31A,31A,31A,31A)$ of the projective special linear group $L_3(5)$ 
there exists exactly one $B_4$-orbit $Z$ with 
$q = \vert Z \vert = 31$ and $Z^{H_4} = Z$. Using GAP \cite{RefGAP},
we get $\rho_4(B_4) \cong A_{31}$ and $\rho_4(H_4) \cong S_{31}$ for $Z$.
This confirms Corollary~\ref{StructureOfRho4B4PrimeSizeSymmetric}.
\end{example}
\begin{example}
\label{L2_16_17A17A17A17A17A17A}
For the class vector $(17A,17A,17A,17A,17A,17A)$ of $L_2(16)$ there exist exactly two $B_6$-orbits $Z_{40}$
and $Z_{11,511,567}$ with $\vert Z_{40} \vert = 40$ and $\vert Z_{11,511,567} \vert = 11,511,567$.
Using GAP \cite{RefGAP}, we get $\rho_6(B_6) \cong  S_4(3)$ for the action of $B_6$ on $Z_{40}$.
Thus Conjecture~\ref{StructureOfRho4B4} cannot be extended to class vectors of length $6$.
\end{example}

\section{Conclusion}\label{SectionConclusion}

In inverse Galois theory, the action of the pure Hurwitz braid group $B_m$ on classes of generating systems
in $\Sigma^i(C_1,\dots, C_m)$ for a finite group $G$ is a major tool for realising $G$ as Galois group
over rational function fields.
At present, little is known about this action. Here, some generic $B_m$-orbits have been found and the $B_m$-action
on these orbits could be computed completely. Besides Fried's lifting invariant in \cite{RefFri2010} and
the theorem of Conway and Parker in \cite[III, Theorem 6.10]{RefMM2018} and \cite[Corollary 6.11]{RefMM2018},
this is a step towards a better understanding of the $B_m$-action. We can now explain the occurrence of 
certain $B_m$-orbits in $\Sigma^i(C_1,\dots, C_m)$ in several cases. These $B_m$-orbits originate from
fixed points under the $B_m$-action resp. $H_m$-action of finite subgroups of $H_m$.
\par
\noindent
Until now, applying this knowledge to the Mathieu group $M_{23}$ to find suitable $B_m$-orbits
that can be used to realize $M_{23}$ as Galois group over rational function fields with field of constants $\mathbb{Q}$
did not lead to success. One reason is that $M_{23}$ has no $(2,3)$-generating systems.
Thus the problem, whether $M_{23}$ occurs as Galois group over $\mathbb{Q}$ still remains open.
\par
\noindent
We have seen that symmetric class vectors often lead to small $B_m$-orbits, so we should consider them.
For $M_{23}$ and $m>4$, these class vectors are a challenge for currently available computers.

{\it Email address:} {\tt frankdothaefner@web.de}
\end{document}